\documentclass[10pt,a4paper,final]{article}
\usepackage[utf8]{inputenc}
\usepackage[lmargin=2.5cm,tmargin=2cm,bmargin=2cm,rmargin=2.5cm]{geometry}
\usepackage{amsfonts}
\usepackage{amssymb}
\usepackage{fix-cm}
\usepackage{amscd,amssymb,stmaryrd}
\usepackage{amsmath}
\usepackage{graphicx}
\usepackage{subfigure}
\usepackage{fancybox}
\usepackage{amscd,amstext}
\usepackage{hyperref}
\usepackage{mathabx}
\usepackage{color}
\usepackage{epstopdf}
\usepackage{caption}
\usepackage{multicol}
\usepackage{cite}
\usepackage{amsthm}
\usepackage{mathtools}
\usepackage{bigints}
\usepackage{amsrefs}
\usepackage{showlabels} 
\usepackage{enumitem}
\usepackage{mathrsfs}
\usepackage{tikz}
\usepackage{empheq}
\usepackage{framed}
\usepackage{ulem}
\usepackage{nameref}
\usepackage{esint}

\numberwithin{equation}{section}

\newcommand{\ds}{\displaystyle}

\newtheorem{theorem}{Theorem}[section]
\newtheorem{lemma}[theorem]{Lemma}

\newtheorem{definition}[theorem]{Definition}
\newtheorem{prop}[theorem]{Proposition}

\newtheorem*{theorem*}{Theorem}
\newtheorem*{lemma*}{Lemma}
\newtheorem*{conj*}{Conjecture}
\newtheorem*{corollary*}{Corollary}
\newtheorem*{proposition*}{Proposition}
\newtheorem{remark}[theorem]{Remark}
\newcommand{\rom}[1]{\uppercase\expandafter{\romannumeral #1\relax}}

\newcommand{\lan}{\langle}
\newcommand{\ran}{\rangle}

\newcommand{\Z}{\mathbb{Z}}
\newcommand{\R}{\mathbb{R}}

\newcommand{\bT}{\mathbb{T}}

\newcommand{\N}{\mathbb{N}}

\newcommand{\cH}{\mathcal{H}}

\newcommand{\Ga}{\alpha}
\newcommand{\Gb}{\beta}
\newcommand{\Gd}{\delta}
\newcommand{\Ge}{\varepsilon}
\newcommand{\Gg}{\gamma}

\newcommand{\Go}{\omega}

\newcommand{\Gth}{\theta}
\newcommand{\GG}{\Gamma}

\newcommand{\GD}{\Delta}

\DeclareMathOperator*{\esssup}{ess\,sup}

\makeatletter
\newcommand{\labitem}[2]{%
\def\@itemlabel{\textbf{#1}}
\item
\def\@currentlabel{#1}\label{#2}}
\makeatother

\begin{document}

\begin{center}
	{\Large  Periodic solution and its stability of a damped BBM equation posed on $\mathbb{T}$}\\\vspace{0.25in} 
    Chun-Ho Lau \footnote[1]{Department of Mathematics, National Taiwan Normal University, Taipei City, Taiwan 106308. Email: lauchho@ntnu.edu.tw}
	Taige Wang \footnote[2]{Department of Mathematical Sciences, University of Cincinnati, Cincinnati, OH 45221, USA. Email: taige.wang@uc.edu}\ \ \ \
	   \vspace{0.06in}
\end{center}

\begin{abstract}
	In this paper, we are concerned with the existence and stability of temporal periodic solutions to a class of the Benjamin-Bona-Mahony (BBM) equation with damping in the torus $\mathbb{T}$, whose interior is applied with periodic force $f(x, t)$ with temporal period $\theta$. These types of solutions are established in $H^{\ell}, \ell\ge 0$, and it is noteworthy to see that the I-energy method is invoked when low regularity of $\ell\in[0, 1)$ is pursued.
	
	\vskip .1 in \noindent {\it Mathematical Subject Classification 2010:}  35D05, 35K55, 35B10, 35Q93.\\
	\noindent{\it Keywords}: periodic solutions; existence; asymptotic periodicity; stability; low-regularity space
\end{abstract}
\section{Introduction}

 In this manuscript, we pursue the existence and stability of solutions of the damped BBM-Burgers equation prescribed with periodic boundary conditions, equivalent to being posed on the torus $\mathbb{T}:=\R/\Z$. We consider the following forced small-amplitude long-wave equation:

\begin{equation} \label{main1}
\begin{cases}
    u_t+u_x-u_{xxt}+uu_x+\beta u-\gamma u_{xx}=f(x,t), \quad\quad (x, t)\in \bT\times [0, \infty),&\\
    u(x,0)=\phi(x), &
\end{cases}
\end{equation}

\noindent where $f$ is external force applied on $\mathbb{T}$. This paper pursues the results of the above equation which are analogous to periodic solutions when imposing a temporally periodic $f$ and low-regularity function spaces on KdV's by Usman, Yang, Zhang \cites{UsmanZ2, Yang, Yang2}. \\

The equation (\ref{main1}), known as the Benjamin-Bona-Mahony (BBM) equation introduced in \cite{BBM}, similar to its alternative counterpart, the Korteweg-de Vries (KdV) equation, serves as a fundamental model for describing the unidirectional propagation of long water waves in a channel. At the same time, KdV attracts more attention with fruitful results. To the authors' knowledge, the KdV equation on a periodic spatial domain is well-known for its time-reversibility and lack of inherent smoothing; research has shown that the introduction of specific dissipative mechanisms can fundamentally alter these dynamics. For instance, Russell and Zhang (see e.g. \cite{Russell}) demonstrated this for unforced KdV on the periodic domain. A point dissipation feedback condition — specifically a dissipative boundary requirement on the first derivative — breaks the system's conservation laws to induce both Kato-type smoothing and exponential decay of solutions; Later in \cite{RZ} they studied the stabilization of the forced KdV equation on the periodic domain by introducing the feedback control on the right side of the equation. One can also consider damping added to the entire domain, yielding long-time behavior for dispersive waves, as introduced in \cite{BSZ2} for KdV on the half-line. Adding damping such as $\alpha u$ and $-u_{xx}$ brings different stabilization of KdV on the whole line. We refer readers to \cite{BW}. \\

Since time-periodic solutions' stability needs exponential damping (shown in \cite{UsmanZ2, WZ} on bounded domains) for forced oscillations, we extend these concepts to the BBM equation on a torus, particularly under the influence of forced oscillations, which allows for an exploration of how external periodic forcing interacts with the equation's inherent dispersive structure to reach steady-state configurations or forced periodic regimes. On the same equation posed on the whole line, \cite{Amick} has pointed out that a similar stabilization mechanism leading to exponential decay can work for BBM equations. In this paper, we consider adding $\Gb u - \Gg u_{xx}$ to obtain the stabilization regime. For this model, several works have studied the global well-posedness (GWP) of the BBM equation under similar conditions. In particular, we highlight \cite{HChen}, which considers spatially periodic initial data without damping.

Beyond establishing time-periodic solutions, the well-posedness theory for related dispersive models plays a crucial role in deriving further results. As pointed out by Bona and Tzvetkov \cite{Tzvet}, while the BBM equation is globally well-posed for $\ell \ge 0$, the authors establish its ill-posedness for $\ell < 0$ by proving that the solution map cannot be $C^2$-smooth. This implies that the equation cannot be solved using traditional iteration schemes in these negative-indexed Sobolev spaces, indicating that the well-posedness result at $\ell = 0$ is sharp. On the contrary, the KdV, local well-posedness (LWP) on $\mathbb{R}$ is known to hold in $H^\ell, \ell>-{3\over 4}$ (see e.g. \cite{Bourgain, Kenig}) and improved global well-posedness is known to hold also in $H^\ell$ with $\ell > -{3\over 4}$ (see e.g. \cite{Tao1} by Colliander et al); in addition, $\ell$ is $-{1\over 2}$ when considering domain being torus $\mathbb{T}$. This value was ever considered as a sharp limit, as it has been shown that a solution cannot be obtained through the standard fixed point theorem when $\ell$ falls below this point. The authors introduced a smoothing operator $I_N$, which maps rougher functions in $H^\ell$ to the high-regularity space $L^2$. It can be shown that while the $L^2$ norm of $Iu$ is not perfectly conserved, it is ``almost conserved"—it grows slowly enough that you can iterate the local existence result to cover any time interval $[0, T]$. This is known as the I-energy method working on those ``almost conserved quantities". Colliander et al also used this seminal technique along with multilinear estimates for generalized KdV on $\mathbb{T}$ in \cite{Tao2}. \cite{Tzvet} and \cite{MWang2} applied the same method on the BBM equation to reach GWP and attractor results. Current authors also considered the same equation and its time-periodic solution on the whole line using a similar technique (see e.g. \cite{LW2}). In this manuscript, we also extend this framework to the BBM equation and obtain a corresponding result in spaces below $H^1$. 

Before narrating our main theorems, we need to quantify two types of stability that the solution may possess to align with long-time behavior as follows:

\begin{definition}\label{de2-2}
    Regarding the dynamics of differential equations, we say a solution $\tilde u(t)$ generated by an initial data $u_0\in X$ is locally stable if $u(t)$ converges to $\tilde u$ in $X$ as $t\rightarrow \infty$, when $u_0$ is sufficiently close to $\tilde u$.
\end{definition}

\begin{definition}\label{de2-3}
    We say $\tilde u(t)$ is globally stable if  $u(t)$ generated by any initial data $u_0\in X$ converges to $\tilde u$ in $X$ regardless of the distance between $u_0$ and $\tilde u$.
\end{definition}

The following are the theorems for $\ell\ge 0$:

\begin{theorem} \label{thm1}
   Suppose $\Gb\in \R$ and $\Gg>\max\{0, -\frac{\Gb}{4\pi^2}\}$. For $T, \tau>0$, if $\lan f(t)\ran=0$ for all $t\geq0$, and  there exists $\Gd>0$ (independent of $\tau$) such that 
    $$\|\phi\|_{H^{\ell}_{per, 0}}+\|f\|_{L^{\infty}([0,\infty);H^{\ell}_{per, 0})}\leq \delta,$$ then there exists a unique $u\in L^{\infty}\cap L^2([\tau,\tau+T];\dot{H}^\ell_{per,0})$ such that $u$ is a mild solution to \eqref{main1} and  
    $$\|u\|_{Y^{\ell}_{\tau,T}}\leq C(\|\phi\|_{H^{\ell}_{per, 0}}+\|f\|_{L^{\infty}([0,\infty);H^{\ell-2}_{per, 0})}). $$
\end{theorem}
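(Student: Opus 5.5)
We write \eqref{main1} as a nonlocal evolution equation by inverting $1-\partial_x^2$. Set $\Phi=(1-\partial_x^2)^{-1}$, with symbol $(1+4\pi^2k^2)^{-1}$ on $\mathbb{T}=\R/\Z$. The equation becomes
$$u_t = L u - \tfrac12 \Phi\partial_x(u^2) + \Phi f,\qquad L:=-\Phi\big(\partial_x+\Gb-\Gg\partial_x^2\big).$$
On the Fourier side, $L$ has symbol
$$\lambda_k=-\frac{\Gb+4\pi^2\Gg k^2+2\pi i k}{1+4\pi^2k^2}.$$
Since $\lan f\ran=0$ and $\partial_x(u^2)$ has zero mean, the mean of $u$ is preserved. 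So for mean-zero $\phi$ we stay in $H^\ell_{per,0}$, where $|k|\ge1$.

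\textbf{Step 1: spectral gap.} There we have
$$\mathrm{Re}\,\lambda_k=-\frac{\Gb+4\pi^2\Gg k^2}{1+4\pi^2k^2}.$$
The numerator satisfies $\Gb+4\pi^2\Gg k^2\ge \Gb+4\pi^2\Gg>0$ by the hypothesis $\Gg>-\Gb/(4\pi^2)$. The quotient tends to $\Gg>0$ as $|k|\to\infty$. Hence there is $\mu>0$ with $\mathrm{Re}\,\lambda_k\le-\mu$ for all $k\ne0$, and
$$\|e^{tL}\|_{H^\ell_{per,0}\to H^\ell_{per,0}}\le e^{-\mu t}.$$
The Duhamel formula then gives the mild formulation
$$u(t)=e^{tL}\phi+\int_0^t e^{(t-s)L}\big(\Phi f(s)-\tfrac12\Phi\partial_x(u^2)(s)\big)\,ds.$$
Note that $\Phi$ gains two derivatives. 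This is why the bound in the theorem can be stated with $\|f\|_{H^{\ell-2}}$: $\|\Phi f\|_{H^\ell}\simeq\|f\|_{H^{\ell-2}}$. Because of the exponential decay, $\int_0^t e^{-\mu(t-s)}ds\le\mu^{-1}$ uniformly in $t$. This uniformity is what makes the constants independent of $\tau$.

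\textbf{Step 2: bilinear estimate.} We need, for $\ell\ge0$ on mean-zero functions,
$$\|\Phi\partial_x(uv)\|_{H^\ell}\lesssim\|u\|_{H^\ell}\|v\|_{H^\ell}.$$
For $\ell=0$, $\Phi\partial_x$ gains one derivative. By duality and the embedding $H^1(\mathbb{T})\subset L^\infty$,
$$\|\Phi\partial_x(uv)\|_{L^2}\lesssim\|uv\|_{H^{-1}}\lesssim\|uv\|_{L^1}\le\|u\|_{L^2}\|v\|_{L^2}.$$
For $\ell>0$, write $\lan k\ran^\ell\lesssim\lan k_1\ran^\ell+\lan k_2\ran^\ell$ and apply the same argument to each piece. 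I expect this to be the estimate used in Bona–Tzvetkov \cite{Tzvet}; it is the main technical point, and everything else is bookkeeping.

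\textbf{Step 3: contraction on $[0,\infty)$.} Define $Y^\ell_{\tau,T}$ as the space carrying the norm $L^\infty\cap L^2([\tau,\tau+T];H^\ell_{per,0})$. I would run the contraction in the stronger space $L^\infty([0,\infty);H^\ell_{per,0})$ with norm $\|\cdot\|_X$, on the ball of radius $2C\delta$. Let $\Gamma$ denote the Duhamel map. Steps 1–2 give
$$\|\Gamma u\|_X\le \|\phi\|_{H^\ell}+C\mu^{-1}\|f\|_{L^\infty H^{\ell-2}}+C\mu^{-1}\|u\|_X^2,$$
together with the matching difference estimate
$$\|\Gamma u-\Gamma v\|_X\le C\mu^{-1}\big(\|u\|_X+\|v\|_X\big)\|u-v\|_X.$$
Choosing $\delta$ small, depending only on $\mu$ and $C$ and not on $\tau$ or $T$, makes $\Gamma$ a contraction. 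This yields a unique global mild solution with
$$\sup_{t\ge0}\|u(t)\|_{H^\ell}\le C\big(\|\phi\|_{H^\ell}+\|f\|_{L^\infty H^{\ell-2}}\big).$$
Restricting to $[\tau,\tau+T]$ controls the $L^\infty$ part of the $Y^\ell_{\tau,T}$ norm. The $L^2$-in-time part follows from $\|u\|_{L^2_t}\le T^{1/2}\|u\|_{L^\infty_t}$, with a constant depending on $T$ but not on $\tau$.

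\textbf{Uniqueness.} For uniqueness in $L^\infty\cap L^2([\tau,\tau+T];\dot H^\ell_{per,0})$ beyond the small ball, I would use local Lipschitz dependence. The contraction argument on short intervals, whose length depends only on the norm, shows that two mild solutions coincide.
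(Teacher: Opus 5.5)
Your proof is correct, and it takes a genuinely different and more economical route than the paper.

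\textbf{The paper's route.} The paper splits $u=v+w$ into the forced linear part and a nonlinear remainder. It solves for $w$ by a contraction in $Y^\ell_{0,T}$ on a single window, using the time-integrated estimate of Lemma~\ref{bilinear}, which requires $\ell\ge1$. It then gets $\tau$-uniform control by iterating across windows: with $w_k=w(kT)$ it derives $\|w_k\|_{H^\ell_{per}}\le\xi\|w_{k-1}\|_{H^\ell_{per}}+C\sup_{t}\|v\|_{Y^\ell_{t,T}}^2$ with $\xi=e^{-\omega T}+O(\eta)<1$, and sums this recursion (Propositions~\ref{wYlT} and~\ref{Prop4}). For $0\le\ell<1$ it switches to the $I$-method and works with $I_Nu\in H^1_{per}$.

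\textbf{Your route.} You run a single fixed point in $L^\infty([0,\infty);H^\ell_{per,0})$. It uses only two facts:
\begin{enumerate}
\item the Duhamel kernel satisfies $\int_0^te^{-\mu(t-s)}\,ds\le\mu^{-1}$ uniformly in $t$;
\item the pointwise-in-time bound $\|(I-\Delta)^{-1}\partial_x(uv)\|_{H^\ell_{per}}\lesssim\|u\|_{H^\ell_{per}}\|v\|_{H^\ell_{per}}$ holds for all $\ell\ge0$. This is the case $p=2$ of Lemma~\ref{kimk}, and your Fourier-side proof of it is sound.
\end{enumerate}

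\textbf{What your route buys.} You get one complete argument covering every $\ell\ge0$ at once, with no window recursion and no $I$-operator. Your threshold $\delta$ is independent of both $\tau$ and $T$. The decay rate $\mu=\min\{\gamma,(\beta+4\pi^2\gamma)/(1+4\pi^2)\}$ is read off explicitly from the symbol rather than from a dissipativity computation.

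\textbf{What the paper's route buys.} The window framework is mainly something the paper reuses later. Even there, your Duhamel-plus-Gronwall reasoning applied to $w_t=Lw-(I-\Delta)^{-1}\partial_x(aw)$ would give the exponential decay needed in Theorems~\ref{thm2}--\ref{thm3} directly, once $\sup_t\|a(t)\|_{H^\ell_{per}}$ is small. The $I$-method is really needed only for the large-data absorbing estimate of Theorem~\ref{thm4} below $H^1$, where no coercive $H^\ell$ energy identity is available.

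\textbf{One small correction.} For $\beta\neq0$ the mean is not preserved; it evolves as $e^{-\beta t}\langle\phi\rangle$. It stays zero for mean-zero data, which is all you use. Restricting to $k\neq0$ is essential, because $\lambda_0=-\beta\ge0$ when $\beta\le0$.
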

Here the space $H^{\ell}_{per, 0}$ denotes the Sobolev space using Bessel potential on $\bT$ with integral 0, $Y^\ell_{\tau,T}$ consists of the same set as $L^{\infty}\cap L^2([\tau,T];\dot{H}^\ell_{per})$, and $\lan y\ran=\int_{\bT}y(x)dx$. The definitions are given in Section \ref{pre}. \\

We remark that $\Gd$ is independent of $T$ and $\tau$ if one considers $f\in L^2([0,\infty);H^{\ell}_{per, 0})$.\\

\begin{theorem} \label{thm2}
     Suppose the assumptions in Theorem \ref{thm1} hold and $f$ is $\Gth$-periodic in time with $\lan f(t)\ran=0$ for $t\in [0,\Gth)$. For $T, \tau>0$, there exists $C>0$ and $\rho>0$ such that 
    $$\|u(\cdot, \cdot+\theta)-u(\cdot, \cdot)\|_{Y^{\ell}_{\tau,T}}\leq C e^{-\rho \tau}$$
    whenever $\|\phi\|_{H^{\ell}_{per, 0}}+\|f\|_{L^{\infty}([0,\infty);H^{\ell}_{per, 0})}\leq \delta$.
\end{theorem}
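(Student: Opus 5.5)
The plan is to compare $u$ with its time translate and show that the difference decays exponentially. Set $v(x,t):=u(x,t+\theta)$. Because $f$ is $\theta$-periodic, $v$ solves \eqref{main1} with the same forcing and initial datum $v(\cdot,0)=u(\cdot,\theta)$. Both $u$ and $v$ are small: Theorem \ref{thm1} gives $\|u(t)\|_{H^\ell}\le C\delta$ for every $t$. Indeed, with $f\in L^\infty$ the bound is uniform on each window, and we iterate it window by window using the damping. The difference $w:=v-u$ has zero mean, since both $u$ and $v$ do, and satisfies
\begin{equation*}
(1-\partial_x^2)w_t+w_x+\Gb w-\Gg w_{xx}+\tfrac12\partial_x\bigl((u+v)w\bigr)=0,
\end{equation*}
with $w(0)=u(\theta)-\phi$, so $\|w(0)\|_{H^\ell}\le C\delta$.

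The core step is an exponential decay estimate for $w$. I would first do the case $\ell=1$, or more generally the energy adapted to the BBM structure. Pairing with $w$ gives
\begin{equation*}
\tfrac12\tfrac{d}{dt}\bigl(\|w\|_{L^2}^2+\|w_x\|_{L^2}^2\bigr)+\Gb\|w\|_{L^2}^2+\Gg\|w_x\|_{L^2}^2=\tfrac12\int (u+v)\,w\,w_x\,dx .
\end{equation*}
Since $\lan w\ran=0$, Poincaré's inequality on $\bT$ gives $\|w_x\|^2\ge 4\pi^2\|w\|^2$. The hypothesis $\Gg>\max\{0,-\Gb/(4\pi^2)\}$ then produces a coercivity constant $2\rho_0>0$ with
\begin{equation*}
\Gb\|w\|^2+\Gg\|w_x\|^2\ge 2\rho_0\bigl(\|w\|^2+\|w_x\|^2\bigr).
\end{equation*}
The right-hand side is bounded by $C(\|u\|_{H^1}+\|v\|_{H^1})\|w\|_{H^1}^2\le C\delta\|w\|_{H^1}^2$ by the Sobolev embedding $H^1\subset L^\infty$. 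Taking $\delta$ small absorbs it, and Gronwall yields
\begin{equation*}
\|w(t)\|_{H^1}\le Ce^{-\rho t}\|w(0)\|_{H^1}.
\end{equation*}
For general $\ell\ge0$ I would apply the operator $\Lambda^{\ell-1}$ (Bessel potential) and use the mild formulation. The linear propagator $S(t)=\exp\bigl(-t(1-\partial_x^2)^{-1}(\partial_x+\Gb-\Gg\partial_x^2)\bigr)$ is diagonal in Fourier. On nonzero modes its symbol has modulus $\exp\bigl(-t(\Gb+4\pi^2k^2\Gg)/(1+4\pi^2k^2)\bigr)\le e^{-2\rho_0 t}$, so it decays exponentially on mean-zero data in every $H^\ell$. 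The nonlinearity $(1-\partial_x^2)^{-1}\partial_x\bigl((u+v)w\bigr)$ gains one derivative. The bilinear estimate
\begin{equation*}
\|(1-\partial_x^2)^{-1}\partial_x(fg)\|_{H^\ell}\lesssim\|f\|_{H^\ell}\|g\|_{H^\ell},\qquad \ell\ge0,
\end{equation*}
which is presumably established in the paper's preliminaries for Theorem \ref{thm1}, together with smallness and a continuity argument on $e^{\rho t}\|w(t)\|_{H^\ell}$, gives the same exponential decay.

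Finally I would convert the pointwise decay into the $Y^\ell_{\tau,T}$ norm. For $t\in[\tau,\tau+T]$ we have $\|w(t)\|_{H^\ell}\le C\delta e^{-\rho t}\le C\delta e^{-\rho\tau}$. This controls the $L^\infty$ part directly, and the $L^2$ part with an extra factor $T^{1/2}$, which is absorbed into $C$ since $T$ is fixed. That gives $\|u(\cdot,\cdot+\theta)-u\|_{Y^\ell_{\tau,T}}\le Ce^{-\rho\tau}$.

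The main obstacle is closing the decay estimate at low regularity $\ell\in[0,1)$. There the energy identity above is unavailable, and the bilinear estimate at $\ell=0$ is borderline. I would try the mild-solution contraction with the weighted norm $\sup_t e^{\rho t}\|w(t)\|_{H^\ell}$ first, which only needs the bilinear estimate plus the uniform smallness of $u$ and $v$. If that fails to be uniform in time, I would fall back on an $I$-method modified energy for $w$, paralleling the one the paper uses for $u$.
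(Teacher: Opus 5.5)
Your proof is correct, but it follows a different route from the paper's.

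\textbf{The paper's route.} For $\ell\ge1$ the paper works window by window in $Y^\ell_{\tau,T}$. The a priori bound $\|w\|_{Y^\ell_{\tau,T}}\le C\|w(\tau)\|_{H^\ell_{per}}$ of Lemma \ref{aprieqap1} and the time-integrated bilinear estimate of Lemma \ref{bilinear} give a discrete contraction $\|w(kT)\|_{H^\ell_{per}}\le\mu\|w((k-1)T)\|_{H^\ell_{per}}$ with $\mu=e^{-\omega T}+C\sup_t\|a\|_{Y^\ell_{t,T}}<1$, where $a=\frac12(u+v)$. The intermediate times are filled in afterwards. For $0\le\ell<1$ the paper switches to an $H^1$ energy estimate for $I_Nw$. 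It absorbs the nonlinearity via $|(I_N(aw)_x,I_Nw)|\le CN^{-3/2}\|I_Na\|_{H^1_{per}}\|I_Nw\|_{H^1_{per}}^2$ with $N$ large.

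\textbf{Your route.} You run Gronwall directly on the Duhamel formula. You use the pointwise-in-time estimate $\|(1-\partial_x^2)^{-1}\partial_x(fg)\|_{H^\ell_{per}}\lesssim\|f\|_{H^\ell_{per}}\|g\|_{H^\ell_{per}}$ and the uniform bound $\sup_t\|(u+v)(t)\|_{H^\ell_{per}}\le C\delta$. This yields $\|w(t)\|_{H^\ell_{per}}\le e^{-(2\rho_0-C\delta)t}\|w(0)\|_{H^\ell_{per}}$ in one step, for every $\ell\ge0$.

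\textbf{Your flagged obstacle is not real, so drop the $I$-method fallback.} On $\mathbb{T}$ the bilinear estimate holds for all $\ell\ge0$, including $\ell=0$. It is Lemma \ref{kimk} with $p=2$. Directly: with $\langle k\rangle:=(1+4\pi^2k^2)^{1/2}$ one has $\langle k\rangle^\ell\lesssim\langle j\rangle^\ell\langle k-j\rangle^\ell$. Cauchy--Schwarz then gives $\langle k\rangle^\ell|\widehat{fg}(k)|\lesssim\|f\|_{H^\ell_{per}}\|g\|_{H^\ell_{per}}$, and the multiplier $(1-\partial_x^2)^{-1}\partial_x$ contributes the summable factor $\langle k\rangle^{-2}$. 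The Gronwall step is uniform in time, because the smallness of $u+v$ is uniform and $e^{-2\rho_0t}$ is integrable. The same Duhamel bootstrap would even give the uniform smallness of $u$ that you import from Theorem \ref{thm1}.

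\textbf{What each approach buys.} Your route is shorter and treats all $\ell\ge0$ at once. It gives pointwise decay at a rate near the linear one. It also shows the $I$-method is superfluous for Theorem \ref{thm2} on the torus, where $(1-\partial_x^2)^{-1}\partial_x$ gains a full derivative. The paper's window iteration is the template inherited from KdV and Burgers problems, where only time-averaged smoothing estimates are available. The $I$-method is really aimed at large-data global control, which is the issue in Theorem \ref{thm4} rather than here.
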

\begin{theorem} \label{thm3}
   Suppose the assumptions in Theorem \ref{thm2} hold. Then \eqref{main1} has a locally stable and unique periodic solution in $H^{\ell}_{per, 0}$. 
\end{theorem}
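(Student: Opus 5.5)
The plan is the standard argument that turns the asymptotic periodicity of Theorem \ref{thm2} into a periodic solution, followed by a contraction (stability) estimate for differences of solutions. The existence, uniqueness and local stability will all come from one exponential decay estimate for differences of small solutions.

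\textbf{Step 1: existence.} Fix small data $\phi$ and let $u$ be the solution from Theorem \ref{thm1}. Set $u_k(t):=u(t+k\theta)$; since $f$ is $\theta$-periodic, each $u_k$ solves \eqref{main1} with initial datum $u(k\theta)$. I will apply Theorem \ref{thm2} with $\tau=k\theta$ and $T=\theta$. This gives
$$\|u(\cdot+(k+1)\theta)-u(\cdot+k\theta)\|_{Y^\ell_{k\theta,\theta}}\le Ce^{-\rho k\theta}.$$
By the $L^\infty_t$ part of the $Y$-norm (and continuity in time of mild solutions), $\|u((k+1)\theta+s)-u(k\theta+s)\|_{H^\ell}\le Ce^{-\rho k\theta}$ uniformly in $s\in[0,\theta]$. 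Hence $\{u(\cdot+k\theta)\}_k$ is Cauchy in $C([0,\theta];H^\ell_{per,0})$, with a geometric tail. Let $u^*$ be its limit and $\phi^*=u^*(0)$. Because the bound of Theorem \ref{thm1} is uniform in $\tau$, $\|\phi^*\|_{H^\ell}\le C\delta$. Continuous dependence of the mild solution on the initial data then shows two things: $u^*$ is the solution starting from $\phi^*$, and $u^*(\theta)=\lim_k u((k+1)\theta)=\phi^*$. By uniqueness in Theorem \ref{thm1}, $u^*$ is $\theta$-periodic.

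\textbf{Step 2: stability and uniqueness.} Let $w=u-u^*$, where $u$ has small data $\phi$. Then
$$w_t+w_x-w_{xxt}+\beta w-\gamma w_{xx}+\tfrac12\big((u+u^*)w\big)_x=0.$$
At the level $\ell=1$, multiply by $w$ and integrate over $\mathbb T$. The mean-zero condition and $\gamma>\max\{0,-\beta/4\pi^2\}$ give coercivity via Poincaré ($\|w_x\|\ge 2\pi\|w\|$):
$$\tfrac12\tfrac{d}{dt}\big(\|w\|^2+\|w_x\|^2\big)+\big(\beta+4\pi^2\gamma\big)\|w\|^2\lesssim \dots$$
After writing $\beta\|w\|^2+\gamma\|w_x\|^2\ge c\|w\|_{H^1}^2$, the cubic term $\int (u+u^*)w\,w_x$ is bounded by $\|u+u^*\|_{L^\infty}\|w\|\|w_x\|\lesssim C\delta\|w\|_{H^1}^2$. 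For $\delta$ small this is absorbed, so Gronwall yields $\|w(t)\|_{H^1}\le e^{-\rho t}\|w(0)\|_{H^1}$.

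For general $\ell\ge0$ I will instead work with the mild/Duhamel formulation on the interval $[\tau,\tau+T]$, reusing the same nonlinear estimates that produced Theorem \ref{thm2}. That theorem is itself a difference estimate between $u$ and its time shift, so its proof should apply verbatim to $u-u^*$ once $u^*$ has small norm. This gives $\|u-u^*\|_{Y^\ell_{\tau,T}}\le Ce^{-\rho\tau}$, which is local stability in the sense of Definition \ref{de2-2}. Uniqueness follows immediately: if $v$ is another small periodic solution, then $\|v(t)-u^*(t)\|$ is $\theta$-periodic and tends to zero, so it vanishes identically.

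\textbf{Main obstacle.} The hardest step is the low-regularity case $0\le\ell<1$. There the energy identity is not available at level $H^\ell$ and $\|u+u^*\|_{L^\infty}$ is not controlled, so the difference estimate has to go through the I-energy (modified energy $\|I_Nw\|$) machinery used for Theorem \ref{thm2}, with commutator errors made small by choosing $N$ large and $\delta$ small. I will rely on Theorem \ref{thm2}'s proof structure there rather than redo it.
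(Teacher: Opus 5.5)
Your proposal is correct and follows essentially the same route as the paper. The paper likewise gets the Cauchy property of $u(k\theta)$ from the exponential decay in Theorem \ref{thm2}, obtains periodicity of the solution issuing from the limit via continuous dependence and uniqueness, and proves local stability by rerunning the argument of Theorem \ref{thm2} (Lemma \ref{aprieqap1} with $a=\frac12(u+\widetilde u)$) on the difference, deferring to the $I$-method sketch for $0\le\ell<1$. Your variations are harmless: Cauchy convergence in $C([0,\theta];H^\ell_{per,0})$ makes $u^*(\theta)=\phi^*$ immediate, the direct $H^1$ energy estimate is an alternative at $\ell=1$, and deducing uniqueness from periodicity plus decay makes explicit a step the paper leaves implicit for $\ell\ge1$.
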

\begin{theorem}  \label{thm4}
   Suppose $\Gb\in \R$ and $\Gg>\max\{0, -\frac{\Gb}{4\pi^2}\}$, and  $f$ is $\Gth$-periodic in time with $\lan f(t)\ran=0$ for $t\in [0,\Gth)$. Then, there exists $\Gd'>0$ such that \eqref{main1} has essentially globally stable periodic solution whenever $\displaystyle\sup_{t\geq 0}\|f(t)\|_{H^{\ell-2}_{per, 0}}\leq \Gd'$.
\end{theorem}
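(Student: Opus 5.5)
The plan is to show that the dissipation coming from $\Gb u-\Gg u_{xx}$ gives a uniform absorbing ball whose radius depends only on the forcing, not on the initial data. On that ball, differences of solutions contract exponentially, provided the forcing is small.

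\textbf{Energy identity and coercivity.} Since $\lan f(t)\ran=0$ and $\lan \phi\ran=0$, integrating \eqref{main1} over $\bT$ shows $\lan u(t)\ran=0$ for all $t$. We may therefore work with mean-zero functions, where $\|u_x\|_{L^2}^2\ge 4\pi^2\|u\|_{L^2}^2$. Multiplying \eqref{main1} by $u$ and integrating kills both $u_x u$ and $u^2u_x$, giving
$$\frac12\frac{d}{dt}\big(\|u\|_{L^2}^2+\|u_x\|_{L^2}^2\big)+\Gb\|u\|_{L^2}^2+\Gg\|u_x\|_{L^2}^2=\int_{\bT} fu\,dx .$$
The hypothesis $\Gg>\max\{0,-\Gb/4\pi^2\}$ gives $\Gb\|u\|^2+\Gg\|u_x\|^2\ge \kappa\|u\|_{H^1}^2$ for some $\kappa>0$. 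We bound the forcing term by $\|f\|_{H^{-1}}\|u\|_{H^1}$, and then by $\|f\|_{H^{\ell-2}}$-type norms after rewriting the equation with $(1-\partial_x^2)^{-1}$. Gronwall then yields
$$\|u(t)\|_{H^1}^2\le e^{-\kappa t}\|\phi\|_{H^1}^2+C\sup_t\|f\|^2 .$$
So after a time $t_0$, depending on $\|\phi\|$, every solution lies in a ball of radius $C\sqrt{\Gd'}$. For $\ell\ge1$ we propagate this bound by the analogous higher-order energy estimates; the nonlinear term is controlled since $H^1\subset L^\infty$ and the BBM smoothing $(1-\partial_x^2)^{-1}\partial_x$ gains a derivative. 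For $\ell\in[0,1)$ the identity above is unavailable at the level of $\phi$. Here we run the same argument on $I_Nu$, the almost-conserved quantity used in the paper's I-method section. The damping absorbs the commutator error, yielding an absorbing ball in $H^\ell$ of radius comparable to $\|f\|$.

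\textbf{Contraction.} Once $u(t_0)$ lies in the small ball, with $\|u(t_0)\|_{H^\ell}+\sup\|f\|\le\Gd$ after choosing $\Gd'$ small, Theorems \ref{thm1}--\ref{thm3} apply to the problem restarted at $t_0$. Here we use $\Gth$-periodicity of $f$ to shift time so that $t_0$ is a multiple of $\Gth$. This gives the periodic solution $u^*$ and convergence $u(t)\to u^*$ in $H^\ell_{per,0}$ at rate $e^{-\rho t}$. Uniqueness of the periodic solution follows because any two periodic solutions both enter the small ball, where the difference estimate of Theorem \ref{thm2} forces them to coincide. This proves the global stability claimed in Theorem \ref{thm4}, for every initial datum and not just nearby ones.

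\textbf{Main obstacle.} The hardest step is the low-regularity absorbing estimate for $\ell<1$. There we must show that $\frac{d}{dt}\|I_Nu\|_{H^1}^2+\kappa\|I_Nu\|_{H^1}^2$ is bounded by the forcing plus an error $N^{-\alpha}\|I_Nu\|_{H^1}^3$. Choosing $N$ large relative to the current size, and iterating on time intervals of fixed length with $N$ possibly depending on $\|\phi\|$ only during the transient, should give the uniform ball. The risk is that the error term grows before the damping dominates. I would first try to handle this by choosing $N$ adapted to the initial data and exploiting the exponential decay to show the bound improves at each step.
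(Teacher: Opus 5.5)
Your architecture is the paper's. You use an $H^1_{per}$ energy identity with Poincar\'e to get $\|u(t)\|_{H^1_{per}}^2\le Ce^{-ct}\|\phi\|_{H^1_{per}}^2+C\sup_s\|f(s)\|_{H^{-1}_{per}}^2$, then induction in $\ell>1$ via the algebra property, the $I$-method energy for $\ell<1$, and finally a restart of Theorem \ref{thm3} once $u(t_0)$ lies in the small ball. For $\ell\ge1$ this is correct and matches the paper. Your use of Young's inequality with a small parameter on the forcing is also better than the paper's fixed split $\frac12\|f\|^2+\frac12\|I_Nu\|^2$ in the $\ell<1$ case, which tacitly requires $\Gg'>1/(8\pi^2)$.

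For $\ell\in[0,1)$ there is a gap, but it is not the one you flag.

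\textbf{The $N$-choice is harmless.} The circularity between $N$ and $\|I_Nu\|_{H^1_{per}}$ is settled by one continuity argument with $N$ fixed; the paper simply asserts it away when it picks $N_1$. Since $N^{-3/2+}\|I_N\phi\|_{H^1_{per}}\lesssim N^{-1/2-\ell+}\|\phi\|_{H^\ell_{per}}$, choose $N=N(\|\phi\|_{H^\ell_{per}})$ so that the commutator error is at most half the damping on a ball of twice the initial-plus-forcing radius. The differential inequality then keeps $\|I_Nu(t)\|_{H^1_{per}}$ in that ball for all $t$, so no re-choosing of $N$ is needed.

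\textbf{The forcing is the real obstruction.} The term $\int_{\bT}I_Nf\,I_Nu\,dx$ is bounded by $\|I_Nf\|_{H^{-1}_{per}}\|I_Nu\|_{H^1_{per}}$. Under the stated hypothesis you only have $\|I_Nf\|_{H^{-1}_{per}}\lesssim N^{1-\ell}\|f\|_{H^{\ell-2}_{per}}$, and this is sharp (take $f$ at frequency $\sim N$). Your earlier step, bounding by $\|f\|_{H^{-1}}$ and then by $H^{\ell-2}$-type norms, is valid only for $\ell\ge1$. So the eventual bound is $\|u(t)\|_{H^\ell_{per}}\lesssim\|I_Nu(t)\|_{H^1_{per}}\lesssim N^{1-\ell}\Gd'$, with $N$ depending on $\|\phi\|_{H^\ell_{per}}$. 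This does not place $u(t_0)$ in the $\phi$-independent ball that Theorem \ref{thm3} requires.

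Your proposed iteration does not repair this. Re-choosing $N\sim R^{1/(1/2+\ell)}$ at radius $R$, with the commutator gain $N^{-3/2+}$, produces a new radius of order $R^{(1-\ell)/(1/2+\ell)}\Gd'$ up to $\epsilon$-losses. For small $\ell$ this exceeds $R$ once $R$ is large; at $\ell=0$ the exponent is $2$. So no $\phi$-independent bound comes out.

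\textbf{What the paper does.} It sidesteps the issue only by bounding $(I_Nf,I_Nu)$ with $\|f\|_{L^2_{per}}$, using $m_N\le1$. In effect it assumes $\sup_t\|f(t)\|_{L^2_{per}}$ (or $\sup_t\|f(t)\|_{H^{-1}_{per}}$) small, not $\sup_t\|f(t)\|_{H^{\ell-2}_{per}}$. Under that stronger, $N$-independent hypothesis your bootstrap closes with an absorbing radius $O(\Gd')$. Then $\|u\|_{H^\ell_{per}}\le C\|I_Nu\|_{H^1_{per}}$, with $C$ uniform in $N$, finishes the argument. Under the hypothesis as actually stated, you need an additional idea beyond the $\|I_Nu\|_{H^1_{per}}$ energy.
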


\begin{remark}
Theorem \ref{thm2} gives asymptotic periodicity, given an imposed temporally periodic $f$, which is related to exponential stability of a version of the linearized damped BBM equation. This property is observed in many infinite-dimensional dynamical systems and constitutes a key estimate for establishing exact periodicity and uniqueness.
\end{remark}

\begin{remark}
Interplay of $\Gb, \Gg$ presented in Theorem \ref{thm1} guarantees the exponential stability of time-periodic solutions of the original damped BBM equation. In conclusion, the diffusion term $-\Gg u_{xx}$ strongly contributes to the exponential stability more than $\Gb u$ on the left-hand side. If $\Gb>0$, $\Gb u$ contributes to stability, while $\Gb<0$ with not sufficiently large amplitude, stability is still achievable thanks to $\Gg>0$. 
\end{remark}


Since slightly different methodologies are employed for the cases of $\ell\gtrless1$, we organize the paper as follows: Section \ref{pre} is dedicated to introducing function spaces and notations including their norms, and a key lemma treating the nonlinear convection term; Section \ref{Sec3} includes a series of lemmas and propositions to lead to proof of Theorem \ref{thm1} for $H^{\ell}, \ell\ge 1$; Section \ref{Sec4} gives the proof of Theorem \ref{thm2}, \ref{thm3}, \ref{thm4}; as the previous sections are all about energy spaces of $\ell\ge 1$, Section \ref{Sec5} is dedicated to giving a key sketch of proving similar results but in low-regularity spaces of $\ell\in[0, 1)$ using the I-method. 

\section{Preliminary} \label{pre}

In this section, first we introduce a series of function spaces compatible with the periodic boundary nature: 

\begin{itemize}
    \item Square integrable space $$L^2_{per}:=\ds \bigg\{f: f(x+1)=f(x), \int_0^1|f(x)|^2dx<\infty\bigg\}, $$
    whose norm is simply denoted by $\|f\|.$
    \item Let $\ell\in \R$. The $L^2$-based Sobolev space using Bessel potential $$H^{\ell}_{per}=\bigg\{f\in \mathcal{D}'_{per}: \ds \sum_{k\in\Z} (1+(2\pi k)^2)^{\ell}|\widehat{f}(k)|^2<\infty\bigg\},$$ where the Fourier coefficients of $f$ are defined to be 
    $\ds \widehat{f}(k):=\int_0^1 e^{-2\pi i k x}f(x)dx=\int_{\bT} e^{-2\pi i kx}f(x)dx$, and the norm on $H^\ell_{per}$ is defined to be 
    $$\|f\|_{H^{\ell}_{per}}=\bigg( \sum_{k\in\Z} (1+(2\pi k)^2)^{\ell}|\widehat{f}(k)|^2\bigg)^{\frac{1}{2}}=\|(I-\Delta)^{\frac{\ell}{2}}f\|_{L^2_{per}},$$
    where $\Delta = \partial^2_{x}$ denotes the Laplacian on $\bT$. The space of all $H^{\ell}_{per}$ elements with integral 0 will be denoted by $H^{\ell}_{per, 0}$. The norm on $H^{\ell}_{per, 0}$ is the same as the norm on $H^{\ell}_{per}$. By abuse of notation, throughout the remainder of this paper we write $\|f\|_{H^{\ell}_{per, 0}}=\|f\|_{H^{\ell}_{per}}$ if $f\in H^{\ell}_{per, 0}$.
    \item The space $L^p([0,T];X)$ is defined to be all elements $f$ such that $f(t)\in X$ for all $t\in [0,T]$ and 
    $$\int_0^T\|f(t)\|_{X}^pdt<\infty,$$
    where $X$ is a function space and $p<\infty$. For $p=\infty$, instead of $L^p$ integral, we assume $f$ satisfies 
    $$\esssup_{t\in [0,T]} \|f(t)\|_X <\infty.$$
    In this context, given $f(x, t)\in X$ for each $t\ge 0$, $\|f(t)\|_X$ means $\|f(\cdot, t)\|_X$ which follows the fashion of ODE following time evolution.
    \item Function space $Y^\ell_{\tau, T}$ related to $H_{per}^\ell$ is endowed with the norm
 \begin{equation*}
\|u\|_{Y^\ell_{\tau, T}}=\sup_{\tau\le s\le \tau +T}\|u(s)\|_{H_{per}^\ell}+\left(\int_{\tau}^{\tau+T}\|u(s)\|^2_{H_{per}^\ell}ds\right)^{1\over 2}.
 \end{equation*} \\
\end{itemize}

Throughout this paper, for $f\in L^{\infty}([0,T]; L^1_{per})$, we write
$$\langle f(t)\rangle:=\int_{\bT} f(t,x)dx.$$

We frequently use the following estimates without explicitly mentioning them
$$\|u\|_{H^{\ell}_{per}}^2=\|u\|_{H^{\ell-1}_{per}}^2+\|u_x\|_{H^{\ell-1}_{per}}^2$$
for any $\ell\in \R$, 
and  
$$\|uv\|_{H^{\ell}_{per}}\leq C\|u\|_{H^{\ell}_{per}}\|v\|_{H^{\ell}_{per}}$$
whenever $\ell>\frac{1}{2}$. The second inequality shows that $H^{\ell}_{per}$ forms an algebra whenever $\ell>\frac{1}{2}$ due to the embedding of Sobolev spaces defined in 1D domains. 


\begin{lemma} \label{bilinear}
    Let $\ell\geq 1$ and $0\leq \ell'\leq \ell-1$. Suppose $v,w\in L^{\infty}([0,T]; H^\ell_{per})\cap L^2([0,T]; H^\ell_{per})$. Then, for all $s\in [0,T]$, and $t\in [0,T-s]$, we have 
     \begin{align*}
        \int_s^{s+t}\|(vw)_x\|_{H^{\ell'}_{per}}^2dt'\leq C'' \|v\|_{Y^{\ell}_{s,t}}^2\|w\|_{Y^{\ell}_{s,t}}^2,
    \end{align*}
    where constant $C^{''}$ is independent of $s$, $t$, and $T$.
\end{lemma}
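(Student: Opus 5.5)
The plan is to reduce everything to the algebra property of $H^{\ell}_{per}$ for $\ell>\frac12$, which the paper records just before the lemma, and then integrate in time. Since $\ell'\le \ell-1$, the norm $\|\cdot\|_{H^{\ell'}_{per}}$ is monotone in the index, and taking one derivative lowers the regularity by one. For each fixed $t'\in[s,s+t]$ this gives
\begin{equation*}
\|(vw)_x(t')\|_{H^{\ell'}_{per}}\le \|(vw)_x(t')\|_{H^{\ell-1}_{per}}\le \|(vw)(t')\|_{H^{\ell}_{per}}\le C\|v(t')\|_{H^{\ell}_{per}}\|w(t')\|_{H^{\ell}_{per}}.
\end{equation*}
The second inequality is the identity $\|u\|_{H^{\ell}_{per}}^2=\|u\|_{H^{\ell-1}_{per}}^2+\|u_x\|_{H^{\ell-1}_{per}}^2$. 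The third is the algebra estimate, which applies because $\ell\ge 1>\frac12$.

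Next I square this bound and integrate over $[s,s+t]$. I put one factor in $L^\infty_t$ and the other in $L^2_t$:
\begin{equation*}
\int_s^{s+t}\|(vw)_x\|_{H^{\ell'}_{per}}^2dt'\le C^2\sup_{s\le t'\le s+t}\|v(t')\|_{H^{\ell}_{per}}^2\int_s^{s+t}\|w(t')\|_{H^{\ell}_{per}}^2dt'.
\end{equation*}
By the definition of the $Y^{\ell}_{s,t}$ norm, the supremum is at most $\|v\|_{Y^{\ell}_{s,t}}^2$, and the integral is at most $\|w\|_{Y^{\ell}_{s,t}}^2$. This gives the claim with $C''=C^2$.

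The constant $C''$ depends only on the algebra constant for $H^{\ell}_{per}$, and hence only on $\ell$. In particular it does not depend on $s$, $t$ or $T$, because the argument never uses the length of the time interval. This is the point of putting one factor in $L^\infty$ rather than using Hölder in a way that would produce a factor of $t$.

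The only step needing care is the algebra inequality. It is quoted in the paper, but if I wanted to justify it I would argue on the Fourier side. Write $\langle k\rangle^{\ell}\le C(\langle k-m\rangle^{\ell}+\langle m\rangle^{\ell})$ and apply Young's inequality for sequences, using $\sum_k\langle k\rangle^{-2\ell}<\infty$ for $\ell>\frac12$. One should also check that the functions are defined at every time, which holds since $v,w\in L^\infty([0,T];H^\ell_{per})$. Measurability of $t'\mapsto\|(vw)_x(t')\|$ is routine. I do not expect any real obstacle.
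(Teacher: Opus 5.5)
Your argument is correct and proves the lemma as stated. The time-integration step matches the paper's general version (Lemma~\ref{multi2} in Remark~\ref{multi}). There too the integrand is bounded pointwise in time by a product of $H^{\ell}_{per}$ norms, one factor goes in $L^\infty_t$ and the rest in $L^2_t$; the paper averages over which of the $p$ factors takes the $L^2_t$ slot, which is equivalent to your choice.

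The two routes differ in the pointwise spatial estimate.
\begin{itemize}
\item You use only the algebra property of $H^{\ell}_{per}$ for $\ell>\frac12$, together with the one-derivative loss $\|(vw)_x\|_{H^{\ell-1}_{per}}\le\|vw\|_{H^{\ell}_{per}}$.
\item The paper instead invokes the sharp Kim--Kwak estimate (Lemma~\ref{kimk}), $\|(I-\Delta)^{-1}\partial_x(\prod_j f_j)\|_{H^{\ell'}_{per}}\le C\prod_j\|f_j\|_{H^{\ell'}_{per}}$, valid down to $\ell'=\frac{p-2}{2p}$.
\end{itemize}

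The paper's route handles low regularity and $p$-fold nonlinearities, which the generalized BBM discussion and the $\ell<1$ section need. However, it controls $\partial_x(vw)$ only in $H^{\ell'-2}_{per}$ with $\ell'\le\ell$, that is, in Sobolev index at most $\ell-2$. So it does not reach the indices in $(\ell-2,\ell-1]$ that the statement requires; for $\ell=1$ it gives $H^{-1}_{per}$ rather than $L^2_{per}$.

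Your elementary argument covers exactly the stated range, including the endpoint $\ell'=\ell-1$ used in the proof of Proposition~\ref{estw}. It requires $\ell>\frac12$, which is harmless here since $\ell\ge 1$.
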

The general version is proved in Remark \ref{multi}. For an alternative proof of this inequality, one may refer to \cite{LW}*{Lemma 3.11}.

\section{Linear BBM with damping of the form $\Gb u-\Gg u_{xx}$} \label{Sec3}
In this section, we shall focus on:
\begin{equation} \label{sub1}
\begin{cases}
    v_t+v_x-v_{xxt}+\beta v-\gamma v_{xx}=f(x,t), \quad\quad (x, t)\in \bT\times [0, \infty)&\\
    v(x,0)=\phi(x), &\\
    \langle v(t) \rangle=0,  \quad\quad t\geq 0.&
\end{cases}
\end{equation}


Define the operator $Av:=(I-\Delta)^{-1}(-v_x-\Gb v+\Gg v_{xx})$ so that the equation reads compactly

$$v_t = Av + (I-\triangle)^{-1}f.$$

We would like to pursue when $A$ generates a semigroup $e^{At}$ on $H^{\ell}_{per, 0}$ with $\|e^{At}\|_{L(H^{\ell}_{per})}\leq e^{-\omega t}$ for some $\omega>0$, which is the following proposition:

\begin{prop}\label{prop3-1}
    Suppose $\ell\ge0$, $\Gb\in \R$, and $\Gg>\min\{0, \frac{-\Gb}{4\pi^2}\}$. The operator $A$ has the following properties:
    \begin{enumerate}
        \item $A$ is a closed operator on $H^{\ell}_{per, 0}$;
        \item $A$ generates a $C_0$-semigroup on $H^{\ell}_{per, 0}$, and there is $\Go>0$ such that $\|e^{At}\|_{L(H^{\ell}_{per})}\leq e^{-\Go t}$ for all $t\geq 0$.
    \end{enumerate}
\end{prop}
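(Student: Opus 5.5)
The operator $A$ is a Fourier multiplier on $\bT$, so I would diagonalize it in the basis $e^{2\pi i k x}$. For $v\in H^{\ell}_{per,0}$ we have $\widehat v(0)=0$, and for $k\neq 0$, writing $\xi=2\pi k$,
$$\widehat{Av}(k)=m(k)\widehat v(k),\qquad m(k)=\frac{-i\xi-\Gb-\Gg\xi^2}{1+\xi^2}.$$
Hence
$$\mathrm{Re}\, m(k)=-\frac{\Gb+\Gg\xi^2}{1+\xi^2},\qquad |\mathrm{Im}\, m(k)|\le \tfrac12 .$$
The key observation is that $m$ is a \emph{bounded} multiplier, since $|m(k)|\le |\Gb|+|\Gg|+1$ uniformly in $k$. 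So $A$ is actually a bounded operator on $H^{\ell}_{per,0}$ for every $\ell\ge 0$. It commutes with $(I-\Delta)^{\ell/2}$, so its operator norm equals $\sup_k|m(k)|$. It also preserves the zero-mean subspace, since $m(0)$ plays no role.

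Part 1 then follows immediately: a bounded, everywhere-defined operator is closed. For part 2, bounded $A$ generates the uniformly continuous (in particular $C_0$) semigroup $e^{At}=\sum_n (tA)^n/n!$. This semigroup acts diagonally:
$$\widehat{e^{At}v}(k)=e^{m(k)t}\widehat v(k).$$
By Plancherel with the weight $(1+\xi^2)^{\ell}$,
$$\|e^{At}v\|_{H^\ell_{per}}^2=\sum_{k\neq0}(1+\xi^2)^{\ell}e^{2\mathrm{Re}\,m(k)t}|\widehat v(k)|^2\le e^{-2\Go t}\|v\|_{H^{\ell}_{per}}^2,$$
where we set $\Go:=\inf_{k\neq0}\frac{\Gb+\Gg\xi^2}{1+\xi^2}$.

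The only real point, and the step I expect to be the main obstacle, is to show that $\Go>0$ under the hypothesis on $\Gb,\Gg$. (I read the hypothesis as the $\max$ version stated in Theorem \ref{thm1}, $\Gg>\max\{0,-\Gb/4\pi^2\}$. The ``$\min$'' as printed would allow $\Gg\le 0$, where decay fails, e.g.\ $\Gb=\Gg=0$ gives $|e^{m t}|=1$.) Consider $g(s)=\frac{\Gb+\Gg s}{1+s}$ for $s=\xi^2\ge 4\pi^2$. Its derivative is $g'(s)=\frac{\Gg-\Gb}{(1+s)^2}$, so $g$ is monotone. Therefore
$$\inf_{s\ge 4\pi^2} g=\min\Big\{\frac{\Gb+4\pi^2\Gg}{1+4\pi^2},\ \Gg\Big\},$$
the second value being the limit as $s\to\infty$. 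Both quantities are strictly positive precisely when $\Gg>0$ and $\Gb+4\pi^2\Gg>0$, which is the assumed condition. This gives an explicit $\Go>0$, and it is independent of $\ell$ because the multiplier does not depend on $\ell$.

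Finally, to justify the Fourier representation of $e^{At}$, I would note the following: the diagonal family $S(t)$ defined by the multipliers $e^{m(k)t}$ is strongly continuous, with $\|S(t)-I\|\le \sup_k|e^{m(k)t}-1|\le Ct\to0$, and its derivative at $0$ is exactly $A$. By uniqueness of the semigroup generated by a given generator, $S(t)=e^{At}$, which completes the proof.
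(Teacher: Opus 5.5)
Your proof is correct, and it takes a different route from the paper's. The paper argues abstractly. It computes $\langle Au,u\rangle_{H^{\ell}_{per}}=-\Gb\|u\|_{H^{\ell-1}_{per}}^2-\Gg\|u_x\|_{H^{\ell-1}_{per}}^2$ and uses the Poincar\'e inequality on mean-zero functions, splitting into the cases $\Gb\ge 0$ and $\Gb<0$, to get a strict dissipativity bound. It pairs this with a Fourier-series solvability estimate for $Au=g$ and invokes the Lumer--Phillips theorem. Closedness is attributed there to $A$ being compact.

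You instead diagonalize $A$ as the multiplier $m(k)$, note that it is bounded, and read the semigroup and its decay directly off $e^{m(k)t}$. The dissipativity is the same quantity in both arguments, since $\mathrm{Re}\,\langle Au,u\rangle_{H^{\ell}_{per}}=\sum_{k\neq 0}(1+\xi^2)^{\ell}\,\mathrm{Re}\,m(k)\,|\widehat u(k)|^2$. Your route still gains several things:
\begin{itemize}
\item it is fully explicit and needs no generation theorem;
\item it correctly identifies $A$ as bounded, which makes part 1 immediate; $A$ is not compact as the paper asserts, since $m(k)\to-\Gg\neq 0$ (rather $A+\Gg I$ is compact);
\item the monotonicity of $s\mapsto(\Gb+\Gg s)/(1+s)$ treats both signs of $\Gb$ at once and yields the sharp rate $\Go=\min\{(\Gb+4\pi^2\Gg)/(1+4\pi^2),\,\Gg\}$, which dominates the constants in the paper.
\end{itemize}
The paper's energy formulation is the one that would carry over to settings without an exact Fourier basis.

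You are right that the hypothesis must read $\max$ rather than $\min$. However, your counterexample does not satisfy the printed condition: with $\Gb=0$ the $\min$ version still forces $\Gg>0$. A valid one is $\Gb=1$, $\Gg=0$, which is admissible under $\min$. There $\sup_{k\neq0}\mathrm{Re}\,m(k)=0$, so no exponential decay holds. Any $\Gg\in(-\Gb/4\pi^2,0)$ with $\Gb>0$ even makes the high modes grow, since $\mathrm{Re}\,m(k)\to-\Gg>0$.
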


\begin{proof}
We first consider $Au=g$, where $g\in C^{\infty}_c(0,1)\cap L^2_{per}$. Thanks tothe  mean-zero condition, the Fourier series of the solution reads

$$u(x, t)\sim \sum_{k\in \Z\setminus \{0\}} -\frac{1+4\pi^2k^2}{\Gg(4\pi^2k^2)+i(2\pi k)+\Gb}\widehat{g}(k)e^{2\pi ikx}.$$

We can see that the series converges in $\ell^2$ as 
$$\bigg|-\frac{1+4\pi^2k^2}{\Gg(4\pi^2k^2)+i(2\pi k)+\Gb}\bigg|\leq \frac{1}{|\Gg|}.$$

Indeed, 
\begin{align}\label{3-2}
   \|u\|_{H^{\ell}_{per}}^2=\sum_{k\in \Z\setminus \{0\}} (1+4\pi^2k^2)^{\ell}\bigg|-\frac{1+4\pi^2k^2}{\Gg(4\pi^2k^2)+i(2\pi k)+\Gb}\bigg|^2|\widehat{g}(k)|^2 &\leq      \sum_{k\in \Z\setminus \{0\}} \frac{1}{|\Gg|^2} (1+(2\pi k)^2)^{\ell }|\widehat{g}(k)|^2 \nonumber\\
   &\leq \frac{1}{|\Gg|^2}\|g\|_{H^{\ell}_{per}}^2.
\end{align}



It is well-known that $A$ is a compact operator on $H^{\ell}_{per}$ as inverse elliptic operator $(I-\triangle)^{-1}$ is compact, and in particular, $A$ is a closed operator. That is the 1st property stated by the Proposition.\\

If $\gamma> 0$, $\Gb\geq 0$ and owing to the commutativity of derivatives including elliptic operators and integration by parts, one can derive

\begin{eqnarray*}
    \langle Au,u\rangle_{H^\ell_{per}}&=& -\langle(I-\triangle)^{{\ell\over2}-1}u_x,(I-\triangle)^{{\ell\over2}}u\rangle-\beta\langle(I-\triangle)^{{\ell\over2}-1}u,(I-\triangle)^{{\ell\over2}}u\rangle-\gamma\langle(I-\triangle)^{{\ell\over2}-1}u_x,(I-\triangle)^{{\ell\over2}}u_x\rangle \\
    &=&-\beta\|u\|_{H^{\ell-1}_{per}}^2-\gamma\|u_x\|_{H^{\ell-1}_{per}}^2\leq-\frac{4\pi^2\gamma}{1+4\pi^2}\|u\|_{H^{\ell}_{per}}^2, 
\end{eqnarray*} where the first inner product term vanishes, by using Poincar\'e inequality for all $\ell\ge 0$. In fact, this version of the Poincaré inequality can be obtained for $H^{\ell}_{per, 0}$:

\begin{align*}
    \|u\|_{H^{\ell}_{per}}^2=\sum_{k\neq 0} (1+4\pi^2 k^2)^{\ell}|\widehat{u}(k)|^2 &\leq \sum_{k\neq 0} \frac{1+4\pi^2k^2}{4\pi^2k^2} (1+4\pi^2 k^2)^{\ell-1}|(2\pi i k)\widehat{u}(k)|^2\leq \frac{1+4\pi^2}{4\pi^2}\|u_x\|_{H^{\ell-1}_{per}}^2.
\end{align*}


If $\beta\leq 0, \gamma>0$, then 
\begin{align*}
    \langle Au,u\rangle_{H^\ell_{per}}&=(-\beta)\|u\|_{H^{\ell-1}_{per}}^2-\gamma\|u_x\|_{H^{\ell-1}_{per}}^2\\
    &=(-\Gb)\|u\|_{H^{\ell-1}_{per}}^2+(-\Gb)\|u_x\|_{H^{\ell-1}_{per}}^2+(\Gb-\Gg)\|u_x\|_{H^{\ell-1}_{per}} \\&\leq \bigg(-\beta+\frac{4\pi^2(\Gb-\Gg)}{1+4\pi^2}\bigg )\|u\|_{H^{\ell}_{per}}^2=\frac{-\Gb-4\pi^2\Gg}{1+4\pi^2}\|u\|_{H^{\ell}_{per}}^2.
\end{align*}
The upper bound has a negative coefficient provided that $\Gg>-\frac{\Gb}{4\pi^2}$. \\

Because of the dissipativity of $A$ in $H_{per}^\ell$ and Estimate (\ref{3-2}), by the renowned Philip-Lumer Theorem (in many literatures such as \cite{LZ}), $A$ generates an analytic $C_0$-semigroup $\{e^{At}\}_{t\ge0}$ with exponential decay and the 2nd property is proved. 
\end{proof}



Proposition \ref{prop3-1} allows us to write the mild solution to \eqref{sub1} as follows:

$$v(t)=(e^{At}(I-\Delta)^{-1}\phi)(x)+\int_0^t(e^{A(t-s)}(I-\Delta)^{-1}f(\cdot,s))(x)ds.$$

Indeed, we have the following estimates:

\begin{lemma}\label{estu}
    Let $\ell\geq 1$. For given $T>0$, there exists $C>0$ such that 
    $$\|v\|^2_{Y^{\ell}_{0,T}}+\|v_t\|^2_{Y^{\ell}_{0,T}}\leq C\bigg(\|\phi\|^2_{H^{\ell}_{per}}+\int_0^T\|f(\cdot,t)\|_{H^{\ell-2}_{per}}^2dt\bigg)$$
\end{lemma}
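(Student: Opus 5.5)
We prove Lemma \ref{estu} by an energy estimate in $H^{\ell}_{per}$ carried out directly on the equation \eqref{sub1}, followed by a separate estimate for $v_t$ read off from the equation. It is convenient to work at the level of Fourier coefficients, or equivalently to apply $(I-\Delta)^{\frac{\ell}{2}-1}$ and pair with $(I-\Delta)^{\frac{\ell}{2}}v$, exactly as in the proof of Proposition \ref{prop3-1}. We first argue for smooth $\phi$ and $f$ (finite Fourier sums). The general case then follows by density, since the mild solution depends continuously on the data through $e^{At}$.

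\textbf{Step 1 (energy identity).} Pair \eqref{sub1} with $v$ in the $H^{\ell-1}_{per}$ inner product. The terms $v_t - v_{xxt}$ give $\frac12\frac{d}{dt}\big(\|v\|^2_{H^{\ell-1}_{per}}+\|v_x\|^2_{H^{\ell-1}_{per}}\big)=\frac12\frac{d}{dt}\|v\|^2_{H^\ell_{per}}$. The transport term $v_x$ vanishes by skew-symmetry. The damping terms give $\beta\|v\|^2_{H^{\ell-1}_{per}}+\gamma\|v_x\|^2_{H^{\ell-1}_{per}}\ge c_0\|v\|^2_{H^\ell_{per}}$, with $c_0=\min\{\frac{4\pi^2\gamma}{1+4\pi^2},\frac{\beta+4\pi^2\gamma}{1+4\pi^2}\}>0$ by the same Poincaré argument used in Proposition \ref{prop3-1}, using mean zero and $\gamma>\max\{0,-\beta/4\pi^2\}$. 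For the forcing we estimate
\[
\langle f,v\rangle_{H^{\ell-1}_{per}}\le \|f\|_{H^{\ell-2}_{per}}\|v\|_{H^{\ell}_{per}}\le \tfrac{c_0}{2}\|v\|_{H^\ell_{per}}^2+\tfrac{1}{2c_0}\|f\|^2_{H^{\ell-2}_{per}}.
\]
Integrating in time then yields
\[
\sup_{[0,T]}\|v\|^2_{H^\ell_{per}}+c_0\int_0^T\|v\|^2_{H^\ell_{per}}\le \|\phi\|^2_{H^\ell_{per}}+C\int_0^T\|f\|^2_{H^{\ell-2}_{per}},
\]
which is the bound on $\|v\|^2_{Y^\ell_{0,T}}$.

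\textbf{Step 2 (time derivative).} From $v_t=Av+(I-\Delta)^{-1}f$ we obtain $\|v_t\|_{H^\ell_{per}}\le C\|v\|_{H^\ell_{per}}+\|f\|_{H^{\ell-2}_{per}}$, since $(I-\Delta)^{-1}\partial_x$ and $(I-\Delta)^{-1}\partial_x^2$ are bounded on $H^\ell_{per}$. The $L^2_t$ part of $\|v_t\|_{Y^\ell_{0,T}}$ then follows directly from Step 1.

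\textbf{Main obstacle.} The supremum part of $\|v_t\|_{Y^\ell_{0,T}}$ is the delicate point. It requires $\sup_t\|f(t)\|_{H^{\ell-2}_{per}}$, which is not controlled by $\int_0^T\|f\|^2_{H^{\ell-2}_{per}}$ alone. Two routes are possible. The first is to use that $f\in L^\infty_tH^{\ell-2}$, as assumed throughout the paper, and absorb the supremum into the constant $C=C(T)$. The second is to read the statement so that the right-hand side also contains $\sup_t\|f\|^2_{H^{\ell-2}_{per}}$; for $T$ fixed this sup bounds the integral. I would adopt the first route, note explicitly where it is used, and otherwise rely only on the clean Step 1 energy bound.
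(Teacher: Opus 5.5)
Your Steps 1 and 2 are correct, and they differ only mildly from the paper. The paper gets $\sup_{t}\|v(t)\|_{H^{\ell}_{per}}$ from the semigroup bound $\|e^{At}\|\le e^{-\Go t}$ of Proposition \ref{prop3-1}, and uses the energy identity (pairing with $(I-\Delta)^{\ell-1}v$) only for the $L^2_t$ part. You get both from the single inequality $\frac{d}{dt}\|v\|^2_{H^{\ell}_{per}}+c_0\|v\|^2_{H^{\ell}_{per}}\le c_0^{-1}\|f\|^2_{H^{\ell-2}_{per}}$. Your version has a $T$-independent constant and works under exactly the hypothesis $\Gg>\max\{0,-\Gb/4\pi^2\}$; the paper's proof invokes the stronger $\Gg>\frac{1+4\pi^2}{4\pi^2}(-\Gb)$ in one case. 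It still yields exponential decay by Gr\"onwall, so nothing is lost relative to the semigroup route. The factor $2$ you drop when adding the sup and integral terms is immaterial.

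The gap is your chosen Route 1 for $\sup_t\|v_t(t)\|_{H^{\ell}_{per}}$, which cannot work. No constant depending only on $T$ (and $\Gb,\Gg,\ell$) bounds $\esssup_{[0,T]}\|f(t)\|^2_{H^{\ell-2}_{per}}$ by $\int_0^T\|f\|^2_{H^{\ell-2}_{per}}$, and a constant allowed to depend on $f$ makes the estimate vacuous.

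In fact the inequality is false as stated. Take $\phi=0$ and $f(t)=\chi_{[0,\epsilon]}(t)\,g$ with a fixed nonzero $g\in H^{\ell-2}_{per,0}$. For $0<t<\epsilon$ you have $\|v(t)\|_{H^{\ell}_{per}}\le \epsilon\|g\|_{H^{\ell-2}_{per}}$. Since $v_t=Av+(I-\Delta)^{-1}g$ with $A$ bounded on $H^{\ell}_{per,0}$, this gives $\|v_t(t)\|_{H^{\ell}_{per}}\ge(1-C_A\epsilon)\|g\|_{H^{\ell-2}_{per}}$. The right-hand side, however, equals $C\epsilon\|g\|^2_{H^{\ell-2}_{per}}$, so letting $\epsilon\to0$ contradicts the inequality for every fixed $C$.

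Your Route 2 is therefore forced. The correct statement must carry $\esssup_{[0,T]}\|f(t)\|^2_{H^{\ell-2}_{per}}$ on the right, or keep only the $L^2_t$ part of $\|v_t\|_{Y^{\ell}_{0,T}}$. The paper's own proof has the same hole, since its ``analogously'' for $v_t$ silently needs $\sup_t\|f(t)\|_{H^{\ell-2}_{per}}$. This is harmless downstream, because only the bound on $\|v\|_{Y^{\ell}_{0,T}}$ is used later (Proposition \ref{estw} onward).
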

\begin{proof}
    From the expression of $v$, we have 
    \begin{align*}
        \|v(t)\|_{H^{\ell}_{per}}&\leq \|e^{At}\|_{L(H^{\ell}_{per})}\|\phi\|_{H^{\ell}_{per}}+\int_0^t \|e^{At}\|_{L(H^{\ell}_{per})}\|f(\cdot,s)\|_{H^{\ell-2}_{per}} ds\\
        &\leq e^{-\Go t} \|\phi\|_{H^{\ell}_{per}}+\int_0^te^{-\Go t}\|f(\cdot,s)\|_{H^{\ell-2}_{per}} ds.
    \end{align*}

    To obtain an $L^2$ estimate for $v$, we multiply \eqref{sub1} by $(I-\Delta)^{\ell-1}v$ and integrate over $x\in \bT$, we have 
    \begin{align*}
        \frac{d}{dt}\|v(t)\|^2_{H^{\ell}_{per}}+\int_{\bT}v_x(t)(I-\Delta)^{\ell}v(t)+\Gb\|v(t)\|^2_{H^{\ell-1}_{per}}+\gamma \|v_x(t)\|_{H^{\ell-1}_{per}}^2=\int_\bT f(\cdot,t)(I-\Delta)^{\ell-1}v(t),
    \end{align*}
    and notice that 
    $$\int_\bT v_x(t)(I-\Delta)^{\ell-1}v(t)=\int_\bT [(I-\Delta)^{\frac{\ell-1}{2}}v(t)]_x (I-\Delta)^{\frac{\ell-1}{2}}v(t) =0$$
    thanks to the periodic condition and the commutativity of derivatives. Moreover, on the right-hand side, we have 
    $$ \int_\bT f(x,t)(I-\Delta)^{\ell-1}v(x,t)dx = \int_{\bT}(I-\Delta)^{\frac{\ell}{2}-1}f(x,t) (I-\Delta)^{\frac{\ell}{2}}v(x,t)dx.$$
    We now integrate both sides with respect to $t\in [0,T]$ and get
    \begin{align*}
       &\beta \int_0^T\|v(t)\|^2_{H^{\ell-1}_{per}}dt+ \gamma\int_0^T\|v_x(t)\|_{H^{\ell-1}_{per}}^2dt \\
       &\leq \|\phi\|_{H^{\ell}_{per}}^2+\frac{1}{4\Ge}\int_0^T\|f(\cdot,t)\|_{H^{\ell-2}_{per}}^2dt+\Ge \int_0^T\|v(t)\|^2_{H^{\ell}_{per}}dt.
    \end{align*}
    Note that if both $\gamma, \beta$ are positive, we can take the lower bound as $$C\gamma \int_0^T\|v(t)\|_{H^{\ell}_{per}}^2dt$$ while  if $\gamma> \frac{{1+4\pi^2}}{4\pi^2}(-\Gb)\geq 0$, then we can take the lower bound as 
    $$C(\Gb+\Gg)\int_0^T\|v(t)\|_{H^{\ell}_{per}}^2dt>0.$$

    By choosing $\Ge<C\min\{\Gb+\Gg,\Gg\}$, we infer  
    $$\int_0^T\|v(t)\|_{H^{\ell}_{per}}^2dt\leq C\bigg(\|\phi\|_{H^{\ell}_{per}}^2+\int_0^T\|f(\cdot,t)\|_{H^{\ell-2}_{per}}^2dt\bigg). $$

    The estimate for $v_t$ can be obtained analogously from the estimate above due to the fact 
    $$v_t=(I-\Delta)^{-1}(-v_x-\Gb v +\Gg v_{xx}+f(x,t) )$$
along with $\|v_x\|_{H^{\ell-2}_{per}}, \|v_{xx}\|_{H^{\ell-2}_{per}}, \|v\|_{H^{\ell-2}_{per}}\leq C'\|v\|_{H^{\ell}_{per}}$.
\end{proof}

We now consider the solution $u$ to \eqref{main1} and write it as $u=v+w$, where $v$ is the solution to \eqref{sub1} and $w$ is a solution to 

\begin{equation}\label{eqsub2}
\begin{cases}
    w_t+w_x-w_{xxt}+ww_x+(vw)_x+\Gb w-\Gg w_{xx}=-vv_x,  \quad (x, t)\in \bT\times [0, \infty),&\\
    w(x,0)=0&\\
    \langle w(t)\rangle =0, \quad t\geq 0.&
    \end{cases}
\end{equation}

Note that the solution to \eqref{eqsub2} can be formally written as $$ w(t)=-\int_0^t e^{A(t-s)}(I-\Delta)^{-1}[w(s)w_x(s)+(v(s)w(s))_x+v(s)v_x(s)]ds.$$
\\

We now show the existence and uniqueness, and some estimates of $w$.

\begin{prop} \label{estw}
 Suppose $\ell\geq 1$ and fixed $T>0
 $. Let $\phi\in H^{\ell}_{per}$ and $f\in L^2([0,T];H^{\ell-2}_{per})$. Then, there exists $c>0$ such that if $\|\phi\|_{H^{\ell}_{per}}^2+\|f\|_{L^2([0,T];H^{\ell-2}_{per})}\leq c$, then \eqref{eqsub2} has a unique solution. Moreover, 
 $$\|w\|_{Y^{\ell}_{0,T}}\leq \bigg(\|\phi\|_{H^{\ell}_{per}}^2+\int_0^T\|f(\cdot,t)\|_{H^{\ell-2}_{per}}^2dt\bigg)^{\frac12}.$$
\end{prop}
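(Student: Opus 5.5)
We obtain $w$ as the fixed point of the map defined by the Duhamel formula for \eqref{eqsub2},
$$\Gamma(w)(t):=-\int_0^t e^{A(t-s)}(I-\Delta)^{-1}\Big[\tfrac12(w^2)_x+(vw)_x+\tfrac12(v^2)_x\Big](s)\,ds,$$
on a closed ball $B_r=\{w\in Y^{\ell}_{0,T}:\ \langle w(t)\rangle=0,\ \|w\|_{Y^{\ell}_{0,T}}\le r\}$. Here $v$ is the linear solution of \eqref{sub1}. Put
$$E:=\|\phi\|^2_{H^{\ell}_{per}}+\int_0^T\|f\|^2_{H^{\ell-2}_{per}}dt .$$
By Lemma \ref{estu}, $\|v\|_{Y^\ell_{0,T}}\le C E^{1/2}$. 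Each nonlinear term is an exact $x$-derivative, so $\Gamma(w)$ automatically has mean zero and $B_r$ is preserved in that respect.

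The first step is a linear inhomogeneous estimate. For $z(t)=\int_0^te^{A(t-s)}(I-\Delta)^{-1}F(s)\,ds$ we will prove
$$\|z\|_{Y^\ell_{0,T}}\le C\|F\|_{L^2([0,T];H^{\ell-1}_{per})}.$$
This is the energy argument of Lemma \ref{estu} with $\phi=0$: we pair $z_t-z_{xxt}+z_x+\Gb z-\Gg z_{xx}=F$ with $(I-\Delta)^{\ell-1}z$. The transport term vanishes. The damping terms give a lower bound $c\|z\|_{H^\ell}^2$ by the Poincaré argument of Proposition \ref{prop3-1}, which uses $\Gg>\max\{0,-\Gb/4\pi^2\}$. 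We bound $\langle F,(I-\Delta)^{\ell-1}z\rangle\le \|F\|_{H^{\ell-1}}\|z\|_{H^{\ell-1}}\le \epsilon\|z\|^2_{H^\ell}+C_\epsilon\|F\|^2_{H^{\ell-1}}$ and absorb. Integrating in time controls both the sup and the $L^2$ parts of the $Y^\ell$ norm. Alternatively, the sup part follows from $\|e^{At}\|\le e^{-\Go t}$ and $(I-\Delta)^{-1}:H^{\ell-2}\to H^\ell$ together with Cauchy–Schwarz in time.

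The second step takes $F=(w^2/2+vw+v^2/2)_x$ and applies Lemma \ref{bilinear} with $\ell'=\ell-1$. This gives
$$\|\Gamma(w)\|_{Y^\ell_{0,T}}\le C_0\big(\|w\|^2_{Y^\ell}+\|v\|_{Y^\ell}\|w\|_{Y^\ell}+\|v\|^2_{Y^\ell}\big)\le C_0(r^2+CE^{1/2}r+CE).$$
The difference estimate is
$$\|\Gamma(w_1)-\Gamma(w_2)\|_{Y^\ell}\le C_0\big(\tfrac12(\|w_1\|+\|w_2\|)+\|v\|\big)\|w_1-w_2\|\le C_0(r+CE^{1/2})\|w_1-w_2\|,$$
since $w_1^2-w_2^2=(w_1+w_2)(w_1-w_2)$.

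We choose $r=E^{1/2}$. Then $\Gamma$ maps $B_r$ to itself provided $C_0(1+C+C)E^{1/2}\le 1$, and it is a contraction provided $C_0(1+C)E^{1/2}<1$. Both hold once $E\le c$ for $c$ small enough. (The hypothesis is written with $\|f\|_{L^2H^{\ell-2}}$ unsquared, which still controls $E$ up to constants when the data are small.) Banach's fixed point theorem then gives existence, uniqueness in $B_r$, and $\|w\|_{Y^\ell_{0,T}}\le E^{1/2}$, which is the stated bound. Unconditional uniqueness in $Y^\ell_{0,T}$ follows from the same difference estimate applied on short subintervals, or by continuity of the norm in $T$.

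The main obstacle is the first step: making sure the full $Y^\ell$ norm, including the $L^2_t H^\ell$ part, is controlled by $\|F\|_{L^2H^{\ell-1}}$ with a constant independent of $T$. The constant must also be compatible with the $T$-independent constant $C''$ of Lemma \ref{bilinear}, and the source term must be absorbed in the case $\Gb<0$. We would handle this exactly as in Lemma \ref{estu}, using the coercivity $\langle Au,u\rangle_{H^\ell}\le -\Go\|u\|^2_{H^\ell}$.
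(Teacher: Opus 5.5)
Your proposal is correct and is essentially the paper's proof. Both run a Banach fixed-point argument for the Duhamel map on the ball of radius $E^{1/2}$ (the paper's choice $K=1$ in $M'=KE^{1/2}$), with $v$ controlled by Lemma \ref{estu} and the nonlinear forcing by Lemma \ref{bilinear}. The paper derives the linear Duhamel bound from $\|e^{At}\|\le e^{-\Go t}$ plus Cauchy--Schwarz and a convolution estimate in time, which is your stated alternative, rather than from the energy identity; your remark on unconditional uniqueness goes slightly beyond the paper, which only obtains uniqueness in the ball.
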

\begin{proof}
Define $Y^{\ell}_{0,T;M'}:=\{w\in Y^{\ell}_{0,T}: \|w\|_{Y^{\ell}_{0,T}}\leq M'\}$ and the operator 
$$\Gamma(w)(t):=-\int_0^t e^{A(t-s)}(I-\Delta)^{-1}[w(s)w_x(s)+(v(s)w(s))_x+v(s)v_x(s)]ds. $$ It has no harm to assume that 
$$M'=K\bigg(\|\phi\|_{H^{\ell}_{per}}^2+\int_0^T\|f(\cdot,t)\|_{H^{\ell-2}_{per}}^2dt\bigg)^{\frac12}$$
for some $K\geq1$ which will be determined. \\

We will show that $\Gamma: Y^{\ell}_{0,T;M'}\mapsto Y^{\ell}_{0,T;M'}$ and it is a contraction mapping. \\

Let $w\in Y^{\ell}_{0,T;M'}$. By Lemma \ref{bilinear}, 
\begin{align*}
& \|\Gamma(w)(t)\|_{H^{\ell}_{per}}\\
&\leq \int_0^t\|e^{A(t-s)}\|_{H^{\ell}_{per}\to H^{\ell}_{per}}\bigg(\|w(s)w_x(s)\|_{H^{\ell-2}_{per}}+\|(vw)_x(s)\|_{H^{\ell-2}_{per}}+\|v(s)v_x(s)\|_{H^{\ell-2}_{per}}\bigg)ds\\
&\leq C_A\bigg[\bigg(\int_0^t\|[ww]_x(s)\|_{H^{\ell-1}_{per}}^2ds\bigg)^{\frac12}+\bigg(\int_0^t\|[wv]_x(s)\|_{H^{\ell-1}_{per}}^2ds\bigg)^{\frac12} +\bigg(\int_0^t\|[vv]_x(s)\|_{H^{\ell-1}_{per}}^2ds\bigg)^{\frac12}\bigg]\\
&\leq C_A'(\|w\|_{Y^{\ell}_{0,t}}^2+\|w\|_{Y^{\ell}_{0,t}}\|v\|_{Y^{\ell}_{0,t}}+\|v\|_{Y^{\ell}_{0,t}}^2).
\end{align*}
Now using Lemma \ref{estu} we have 
$$\sup_{0\leq t\leq T}\|\Gamma(w)(t)\|_{H^{\ell}_{per}}\leq C_A''\bigg[(M')^2+\frac{(M')^2}{K^2}\bigg].$$

Meanwhile, a similar argument showed that 
\begin{align*}
    \bigg(\int_0^t \|\Gamma(w)(s)\|_{H^{\ell}_{per}}^2ds\bigg)^{\frac12} &\leq C_A'''\bigg[(M')^2+\frac{(M')^2}{K^2}\bigg].
\end{align*}
Thus, we can conclude that 
$$\|\Gamma(w)\|_{Y^{\ell}_{0,T}}\leq (C''+C''')\bigg[(M')^2+\frac{(M')^2}{K^2}\bigg].$$

If we take $(C''+C''')(M')^2(1+K^{-2})\leq M'$, then $\Gamma(w)\in Y^{\ell}_{0,T;M'}$.\\

Next, we also need to estimate the term $\Gamma(w)(s)-\Gamma(w')(s)$ if $w,w'\in Y^{\ell}_{0,T;M'}$. Note that 

\begin{align*}
    &\Gamma(w)(t)-\Gamma(w')(t)\\
    &= -\int_0^t e^{A(t-s)}(I-\Delta)^{-1}[(w(s)[w_x-w'_x](s)+(w-w')(s)w_x'(s))+(v[w-w'])_x(s)]ds.
\end{align*}
Therefore, 
\begin{align*}
    &\sup_{0\leq t\leq T}\|\Gamma(w)(t)-\Gamma(w')(t)\|_{H^\ell_{per}}\\
    &\leq C_A\bigg[\bigg(\int_0^t\|w(s)[w_x-w'_x](s)\|^2_{H^{\ell-2}_{per}}ds\bigg)^{\frac12}+\bigg(\int_0^t\|w'_x(s)[w-w'](s)\|^2_{H^{\ell-2}_{per}}ds\bigg)^{\frac12}\\
    &\quad\quad\quad +\bigg(\int_0^t\|(v[w-w'])_x(s)\|^2_{H^{\ell-2}_{per}}ds\bigg)^{\frac12}\bigg].
\end{align*}
By Lemma \ref{bilinear}, we have 
\begin{align*}
    \int_0^t\|w(s)[w_x-w'_x](s)\|^2_{H^{\ell-2}_{per}}ds&\leq C'' \|w\|^2_{Y^{\ell}_{0,T}}\|w-w'\|^2_{Y^{\ell}_{0,T}};\\
    \int_0^t\|w'_x(s)[w-w'](s)\|^2_{H^{\ell-2}_{per}}ds&\leq C''\|w'\|^2_{Y^{\ell}_{0,T}}\|w-w'\|^2_{Y^{\ell}_{0,T}};\\
    \int_0^t\|(v[w-w'])_x(s)\|^2_{H^{\ell-2}_{per}}ds&\leq C''\|v\|_{Y^{\ell}_{0,T}}^2\|w-w'\|^2_{Y^{\ell}_{0,T}}.
\end{align*}
Thus, 
\begin{align*}
    \sup_{0\leq t\leq T}\|\Gamma(w)(t)-\Gamma(w')(t)\|_{H^{\ell}_{per}}&\leq C_A^{(4)}(\|w\|_{Y^{\ell}_{0,T}}+\|w'\|_{Y^{\ell}_{0,T}}+\|v\|_{Y^{\ell}_{0,T}})\|w-w'\|_{Y^{\ell}_{0,T}}\\
    &\leq C_A^{(4)}(2M'+\frac{M'}{K})\|w-w'\|_{Y^{\ell}_{0,T}}
\end{align*}
and similarly, 
\begin{align*}
    \bigg(\int_0^T\|\Gamma(w)(t)-\Gamma(w')(t)\|_{H^\ell_{per}}^2dt\bigg)^\frac{1}{2}&\leq C_A^{(5)}(\|w\|_{Y^{\ell}_{0,T}}+\|w'\|_{Y^{\ell}_{0,T}}+\|v\|_{Y^{\ell}_{0,T}})\|w-w'\|_{Y^{\ell}_{0,T}}\\
    &\leq C_A^{(5)} (2M'+\frac{M'}{K})\|w-w'\|_{Y^{\ell}_{0,T}}.
\end{align*}
Thus, setting $(C^{(4)}_A+C^{(5)}_A)(2M'+M'/K)<1$ (\textit{i.e.} $M'<\frac{K}{(2K+1)(C^{(4)}_A+C^{(5)}_A)}$) and choosing $K=1$, we can apply Banach Fixed Point Theorem and conclude that $\GG(w)=w$ for some $w\in Y^{\ell}_{0,T;M'}$.  
\end{proof}

\begin{remark} \label{rmkforfinLinfty}
In fact, we have the following estimate for $v(x,t)$:
\begin{align*}
    \|v(\tau)\|_{H^{\ell}_{per}}
    &\leq e^{-\omega\tau}\|\phi\|_{H^{\ell}_{per}}+\frac{1}{\sqrt{2\Go}} \bigg(\int_0^{\tau}e^{-2\Go(\tau-s)}\|f(s)\|^2_{H^{\ell-2}_{per}}ds\bigg)^{\frac12}.
\end{align*}

If we further assume that $f\in L^{\infty}([0,\infty); H^{\ell-2}_{per})$ (in particular, if $f$ is periodic), then we have
    \begin{align*}
        \|v(\tau)\|_{H^{\ell}_{per}}\leq e^{-\omega\tau}\|\phi\|_{H^{\ell}_{per}}+\frac{1}{\sqrt{\Go}}\|f\|_{L^{\infty}([0,\infty);H^{\ell-2}_{per})},
    \end{align*}
    whence
    \begin{align}\label{estimateA}
        \|v\|_{Y^{\ell}_{\tau,T}}\leq C''_T (\|\phi\|_{H^{\ell}_{per}}+\|f\|_{L^{\infty}([0,\infty);H^{\ell-2}_{per})}),
    \end{align}
    where $C''_T$ is a constant independent of $\tau$.
\end{remark}

\begin{prop}
    Under the assumption of Proposition \ref{estw} and $c<1$, there exists $C>0$ such that the solution to \eqref{eqsub2}, $w$, satisfies
    $$\|w_t\|_{Y^{\ell}_{0,T}}\leq C  \bigg(\|\phi\|_{H^{\ell}_{per}}^2+\int_0^T\|f(\cdot,t)\|_{H^{\ell-2}_{per}}^2dt\bigg)^{\frac12}. $$
\end{prop}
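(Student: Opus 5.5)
Since $w$ is the solution of \eqref{eqsub2} constructed in Proposition \ref{estw}, the plan is to solve the equation for $w_t$ algebraically, exactly as was done for $v_t$ at the end of Lemma \ref{estu}, and then bound each resulting term in $Y^{\ell}_{0,T}$ using Lemma \ref{bilinear}, Lemma \ref{estu} and Proposition \ref{estw}. Applying $(I-\Delta)^{-1}$ to \eqref{eqsub2} gives
\begin{equation*}
w_t=(I-\Delta)^{-1}\Big(-w_x-\Gb w+\Gg w_{xx}-\tfrac12 (w^2)_x-(vw)_x-\tfrac12 (v^2)_x\Big).
\end{equation*}
Since $(I-\Delta)^{-1}$ maps $H^{\ell-2}_{per}$ boundedly onto $H^{\ell}_{per}$, for each $t$ we get
\begin{equation*}
\|w_t(t)\|_{H^{\ell}_{per}}\leq C\Big(\|w(t)\|_{H^{\ell}_{per}}+\|(w^2)_x(t)\|_{H^{\ell-2}_{per}}+\|(vw)_x(t)\|_{H^{\ell-2}_{per}}+\|(v^2)_x(t)\|_{H^{\ell-2}_{per}}\Big).
\end{equation*}

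The linear term is handled directly: its contribution to both the sup part and the $L^2$ part of $\|w_t\|_{Y^\ell_{0,T}}$ is bounded by $C\|w\|_{Y^\ell_{0,T}}$. Proposition \ref{estw} then gives the bound $C(\|\phi\|^2_{H^\ell_{per}}+\int_0^T\|f\|^2_{H^{\ell-2}_{per}})^{1/2}$.

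For the $L^2$-in-time part of the nonlinear terms, I would use $\|\cdot\|_{H^{\ell-2}_{per}}\le\|\cdot\|_{H^{\ell-1}_{per}}$ and apply Lemma \ref{bilinear} with $\ell'=\ell-1$. This bounds the contribution by
\begin{equation*}
C\big(\|w\|^2_{Y^\ell_{0,T}}+\|v\|_{Y^\ell_{0,T}}\|w\|_{Y^\ell_{0,T}}+\|v\|^2_{Y^\ell_{0,T}}\big).
\end{equation*}
By Lemma \ref{estu} and Proposition \ref{estw}, each of these products is at most $C\,\mathcal{E}$, where $\mathcal{E}:=\|\phi\|^2_{H^\ell_{per}}+\int_0^T\|f\|^2_{H^{\ell-2}_{per}}$. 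This is where the hypothesis $c<1$ is used: it gives $\mathcal{E}\le c<1$, so $\mathcal{E}\le \mathcal{E}^{1/2}$ and the quadratic bound is dominated by the linear one claimed in the statement.

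The sup-in-time part is where I expect the main obstacle. Lemma \ref{bilinear} controls only time integrals, so it does not directly give a pointwise bound. The simplest fix is to use the algebra property of $H^{\ell}_{per}$ for $\ell\ge1>\tfrac12$:
\begin{equation*}
\|(vw)_x(t)\|_{H^{\ell-2}_{per}}\le\|vw(t)\|_{H^{\ell-1}_{per}}\le\|vw(t)\|_{H^{\ell}_{per}}\le C\|v(t)\|_{H^\ell_{per}}\|w(t)\|_{H^\ell_{per}},
\end{equation*}
and the same for $w^2$ and $v^2$. This bounds the sup part by products of sup norms, which are again controlled by $C\,\mathcal{E}\le C\mathcal{E}^{1/2}$, using the sup bounds of Lemma \ref{estu} and Proposition \ref{estw}.

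If one prefers to avoid the algebra estimate, an alternative for the sup part is to differentiate the mild formula for $w$ in time. This would require time-regularity of the forcing, so I would only fall back on it if the pointwise product bound caused trouble. Adding the linear, $L^2$ and sup contributions gives the stated estimate for $\|w_t\|_{Y^{\ell}_{0,T}}$.
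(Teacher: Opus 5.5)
Your proposal is correct and follows essentially the same route as the paper. Both arguments rewrite $w_t=(I-\Delta)^{-1}(-ww_x-(vw)_x-vv_x)+Aw$ and bound the nonlinear terms pointwise in time by the product/algebra estimate for $\ell\ge 1$. The only difference is that the paper uses these pointwise bounds for both the sup and $L^2$ parts, while you use Lemma \ref{bilinear} for the $L^2$ part. Both then close with Lemma \ref{estu}, Proposition \ref{estw} and $c<1$, which makes the quadratic terms dominated by the square-root quantity.
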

\begin{proof}
    Note that 
    $$w_t=(I-\Delta)^{-1}(-ww_x-(vw)_x-vv_x)+Aw.$$
    Since $\ell\geq 1$, we know that $fg\in H^{\ell-1}_{per}$ whenever $f\in H^{\ell}_{per}$ and $g\in H^{\ell-1}_{per}$.
    Thus, 
    $$\|w(t)w_x(t)\|_{H^{\ell-2}_{per}}\leq C\|w(t)w_x(t)\|_{H^{\ell-1}_{per}}\leq C'\|w(t)\|_{H^{\ell}_{per}}\|w_x(t)\|_{H^{\ell-1}_{per}}\leq C''\|w(t)\|_{H^{\ell}_{per}}^2.$$
    We also obtain similar estimates for the term $(vw)_x$ and $vv_x$. Moreover, 
    $$\|Aw(t)\|_{H^{\ell}_{per}}\leq 3\|w(t)\|_{H^{\ell}_{per}}.$$
    Therefore, we have 
    \begin{align*}
        \|w_t\|_{Y^{\ell}_{0,T}}&\leq C (\|w\|^2_{Y^{\ell}_{0,T}}+\|v\|_{Y^{\ell}_{0,T}}^2)+C'\|w\|_{Y^{\ell}_{0,T}}\\
        &\lesssim  \bigg(\|\phi\|_{H^{\ell}_{per}}^2+\int_0^T\|f(\cdot,t)\|_{H^{\ell-2}_{per}}^2dt\bigg)^{\frac12}
    \end{align*}
    with the aid of Lemma \ref{estu} and Proposition \ref{estw}.
\end{proof}

We next address the case where the initial time in \eqref{sub1} and \eqref{eqsub2} is non-zero. For $t\geq \tau$, 
$$v(t)=e^{A(t-\tau)}v(\tau)+\int_\tau^te^{A(t-s)}(I-\Delta)^{-1}f(\cdot,s)ds$$
and 
$$w(t)=e^{A(t-\tau)}w(\tau)-\int_{\tau}^t e^{A(t-s)}(I-\Delta)^{-1}[w(s)w_x(s)+(v(s)w(s))_x+v(s)v_x(s)]ds$$
\begin{lemma}
    Let $\ell\geq 1$. Suppose $\phi\in H^{\ell}_{per}$ $f\in L^{2}([0,T+\tau];\cH^{\ell-2}).$ We have 
    $$\|v\|_{Y^{\ell}_{\tau,T}}\leq C''(\|\phi\|_{ H^{\ell}_{per}}+\|f\|_{L^2([0,\tau+T];H^{\ell-2}_{per})}),$$
    where $C''$ is independent of $\tau$.
\end{lemma}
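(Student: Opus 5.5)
The plan is to combine the mild formulation of \eqref{sub1}, restarted at time $\tau$, with the exponential decay $\|e^{At}\|_{L(H^{\ell}_{per})}\le e^{-\Go t}$ from Proposition \ref{prop3-1}. The bound on $\|v(\tau)\|_{H^\ell_{per}}$ in Remark \ref{rmkforfinLinfty} is used only as a bound on the restarted initial datum. Writing
$$v(t)=e^{A(t-\tau)}v(\tau)+\int_\tau^t e^{A(t-s)}(I-\Delta)^{-1}f(\cdot,s)\,ds,\qquad t\in[\tau,\tau+T],$$
the problem on $[\tau,\tau+T]$ is exactly \eqref{sub1} with initial time $\tau$, initial datum $v(\tau)$, and forcing $f(\cdot,\cdot+\tau)$.

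Because $A$ is time-independent, Lemma \ref{estu} applies to the time-shifted problem with the same constant $C=C(T)$. This gives
$$\|v\|^2_{Y^{\ell}_{\tau,T}}\le C\Big(\|v(\tau)\|^2_{H^{\ell}_{per}}+\int_\tau^{\tau+T}\|f(s)\|^2_{H^{\ell-2}_{per}}ds\Big).$$
Here $C$ depends on $T$, $\Gb$, $\Gg$ and $\Go$, but not on $\tau$. The second term on the right is bounded by $\|f\|^2_{L^2([0,\tau+T];H^{\ell-2}_{per})}$.

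It remains to control $\|v(\tau)\|_{H^\ell_{per}}$ uniformly in $\tau$. From the Duhamel formula on $[0,\tau]$ and Cauchy--Schwarz (the first display of Remark \ref{rmkforfinLinfty}),
$$\|v(\tau)\|_{H^{\ell}_{per}}\le e^{-\Go\tau}\|\phi\|_{H^{\ell}_{per}}+\Big(\int_0^\tau e^{-2\Go(\tau-s)}ds\Big)^{1/2}\Big(\int_0^\tau\|f(s)\|^2_{H^{\ell-2}_{per}}ds\Big)^{1/2}.$$
The first factor of the second term is at most $(2\Go)^{-1/2}$, so
$$\|v(\tau)\|_{H^{\ell}_{per}}\le \|\phi\|_{H^{\ell}_{per}}+(2\Go)^{-1/2}\|f\|_{L^2([0,\tau];H^{\ell-2}_{per})}.$$
Inserting this into the shifted estimate and taking square roots gives the claim with $C''=C(1+(2\Go)^{-1/2})$, which is independent of $\tau$.

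The main point of care is the use of $\|(I-\Delta)^{-1}f\|_{H^\ell_{per}}\le\|f\|_{H^{\ell-2}_{per}}$ inside the Duhamel integral. One must also check that $\omega$ is strictly positive, which is where the hypothesis on $\Gg$ from Proposition \ref{prop3-1} enters. Uniformity in $\tau$ follows entirely from the factor $\int_0^\tau e^{-2\Go(\tau-s)}ds\le 1/(2\Go)$. Without exponential decay ($\Go=0$) this factor would grow like $\tau$.
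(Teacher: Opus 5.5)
Your argument is correct and has the same structure as the paper's proof: restart the Duhamel formula at $\tau$, bound $\|v(\tau)\|_{H^{\ell}_{per}}$ uniformly in $\tau$, then estimate the window $[\tau,\tau+T]$. The only difference is that you swap the tools. The paper applies Lemma \ref{estu} on $[0,\tau]$, tacitly using that its constant does not depend on the interval length, and uses the semigroup bound on the window. You instead get the $\tau$-uniformity explicitly from $\int_0^\tau e^{-2\omega(\tau-s)}\,ds\le (2\omega)^{-1}$ and apply Lemma \ref{estu} only on a window of fixed length $T$, which is slightly cleaner.
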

\begin{proof}
    Observe that $v(\tau)=e^{A\tau}\phi+\int_0^{\tau}e^{A(\tau-s)}(I-\Delta)^{-1}f(s)ds$. Then, Lemma \ref{estu} gives us
    $$\|v(\tau)\|_{H^{\ell}_{per}}\leq C (\|\phi\|_{H^{\ell}_{per}}+\|f\|_{L^2([0,\tau];H^{\ell-2}_{per})}).$$
    Therefore, from the expression of $v$, we have 
    $$\|v(t)\|_{H^{\ell}_{per}}^2\leq e^{-\omega(t-\tau)}\|v(\tau)\|^2_{H^{\ell}_{per}}+\int_\tau^t e^{-\omega(t-s)}\|f(s)\|_{H^{\ell-2}_{per}}^2ds,$$
whence    
    $$\|v\|^2_{Y^{\ell}_{\tau,T}}\leq C' 2(\|\phi\|_{H^{\ell}_{per}}^2+\|f\|_{L^2([0,\tau];H^{\ell}_{per}}^2)+\int_{\tau}^{\tau+T}\|f(s)\|_{H^{\ell-2}_{per}}^2ds,$$
    and we have the desired inequality.
\end{proof}

\begin{prop} \label{wYlT}
Let $\ell\geq 1$. If $(\|w(\tau)\|_{H^{\ell}_{per}}^2+\|v\|_{Y^{\ell}_{\tau,T}}^2)^{\frac12}$ is sufficiently small, then 
    $$\|w\|_{Y^{\ell}_{\tau,T}}<K(\|w(\tau)\|_{H^{\ell}_{per}}^2+\|v\|_{Y^{\ell}_{\tau,T}}^2)^{\frac12}$$
for some $K>1$.
\end{prop}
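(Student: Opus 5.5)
The plan is to rerun the contraction argument of Proposition \ref{estw} on the interval $[\tau,\tau+T]$, now starting from the nonzero datum $w(\tau)$. Define
$$\Gamma_\tau(w)(t):=e^{A(t-\tau)}w(\tau)-\int_\tau^t e^{A(t-s)}(I-\Delta)^{-1}\big[w(s)w_x(s)+(v(s)w(s))_x+v(s)v_x(s)\big]ds,\qquad t\in[\tau,\tau+T].$$
Set $\epsilon:=(\|w(\tau)\|_{H^{\ell}_{per}}^2+\|v\|_{Y^{\ell}_{\tau,T}}^2)^{1/2}$. We work in the closed ball $B:=\{z\in Y^{\ell}_{\tau,T}: z(\tau)=w(\tau),\ \|z\|_{Y^{\ell}_{\tau,T}}\le M'\}$ with $M'=K_0\epsilon$, where $K_0>1$ is to be chosen.

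\textbf{Step 1 (linear part).} By Proposition \ref{prop3-1}, $\|e^{A(t-\tau)}w(\tau)\|_{H^{\ell}_{per}}\le e^{-\omega(t-\tau)}\|w(\tau)\|_{H^{\ell}_{per}}$. Taking the supremum and the $L^2$ norm in $t$ over $[\tau,\tau+T]$ gives
$$\|e^{A(\cdot-\tau)}w(\tau)\|_{Y^{\ell}_{\tau,T}}\le C_1\|w(\tau)\|_{H^{\ell}_{per}},\qquad C_1=1+(2\omega)^{-1/2}.$$
The constant $C_1$ depends on neither $\tau$ nor $T$.

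\textbf{Step 2 (Duhamel part).} First, $(I-\Delta)^{-1}$ maps $H^{\ell-2}_{per}$ into $H^{\ell}_{per}$ boundedly. Apply Cauchy--Schwarz in $s$ and then Lemma \ref{bilinear} on $[\tau,\tau+T]$ with $\ell'=\ell-1\ge 0$, using $ww_x=\tfrac12(w^2)_x$ and $vv_x=\tfrac12(v^2)_x$. This yields
$$\sup_{t}\Big\|\int_\tau^t\cdots\Big\|_{H^{\ell}_{per}}\le C_A\Big(\|w\|_{Y^{\ell}_{\tau,T}}^2+\|w\|_{Y^{\ell}_{\tau,T}}\|v\|_{Y^{\ell}_{\tau,T}}+\|v\|_{Y^{\ell}_{\tau,T}}^2\Big).$$
The same bound, with a constant depending on $T$, holds for the $L^2_t$ part. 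Because Lemma \ref{bilinear} has a constant independent of the starting time $s$, all constants here are independent of $\tau$. Hence
$$\|\Gamma_\tau(w)\|_{Y^{\ell}_{\tau,T}}\le C_1\epsilon+C_2(M'^2+M'\epsilon+\epsilon^2)\le C_1\epsilon+3C_2K_0^2\epsilon^2.$$
Choosing $K_0=2C_1$ (so $K_0>1$) and $\epsilon\le 1/(6C_2K_0)$ gives $\|\Gamma_\tau(w)\|_{Y^{\ell}_{\tau,T}}\le K_0\epsilon=M'$, so $\Gamma_\tau$ maps $B$ into itself.

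\textbf{Step 3 (contraction).} As in Proposition \ref{estw}, expand $ww_x-w'w'_x$ and $(v(w-w'))_x$ bilinearly and apply Lemma \ref{bilinear}. This gives
$$\|\Gamma_\tau(w)-\Gamma_\tau(w')\|_{Y^{\ell}_{\tau,T}}\le C_3(2M'+\epsilon)\|w-w'\|_{Y^{\ell}_{\tau,T}}.$$
Shrinking $\epsilon$ further so that $C_3(2K_0+1)\epsilon<1$ makes $\Gamma_\tau$ a contraction. The Banach fixed point theorem then gives a unique fixed point in $B$.

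\textbf{Step 4 (identification).} It remains to check that this fixed point coincides with the solution $w$ of \eqref{eqsub2} restricted to $[\tau,\tau+T]$. Both solve the same mild equation from time $\tau$ with the same datum, so uniqueness on short subintervals, continued up to $\tau+T$, identifies them. The desired estimate then holds with $K:=2K_0$, giving the strict inequality. I expect this step to be the main obstacle: uniqueness is only known inside the ball, so one needs a continuity argument in $t$. Since $t\mapsto\|w\|_{Y^{\ell}_{\tau,t-\tau}}$ is continuous and starts at $\|w(\tau)\|_{H^{\ell}_{per}}<M'$, a bootstrap shows that $w$ cannot leave the ball before time $\tau+T$. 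The other point requiring care, the $\tau$-independence of the constants, is supplied by Lemma \ref{bilinear} and the uniform semigroup decay.
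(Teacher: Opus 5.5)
Your proposal is correct and follows the paper's proof: both use the contraction map $\Gamma$ on a ball of radius $K(\|w(\tau)\|_{H^{\ell}_{per}}^2+\|v\|_{Y^{\ell}_{\tau,T}}^2)^{1/2}$ in $Y^{\ell}_{\tau,T}$, with Lemma \ref{bilinear} controlling the quadratic terms uniformly in $\tau$. Your version is slightly more careful than the paper's, since choosing $K_0=2C_1$ accounts for the constant of the free evolution in the full $Y^{\ell}$ norm, and your Step 4 supplies the continuity argument identifying the fixed point with $w$, which the paper only asserts.
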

\begin{proof}
    Define $\ds \Gamma(q)(t):=e^{A(t-\tau)}w(\tau)-\int_{\tau}^{t}e^{A(t-s)}(I-\Delta)^{-1}(qq_x+(qv)_x+vv_x)(s)ds$ for $t\geq \tau$, where $w$ is the solution to \eqref{eqsub2}.\\

 Consider the space $Y^\ell_{\tau,T,M'}:=\{q\in Y^{\ell}_{\tau,T}:\|q\|_{Y^{\ell}_{\tau,T}}\leq M'\}$ with $M'=K(\|w(\tau)\|_{H^{\ell}_{per}}^2+\|v\|_{Y^{\ell}_{\tau,T}}^2)^{\frac12}$.\\

    We will show the boundedness of $\Gamma$ on $Y^{\ell}_{\tau,T,M'}$. Contraction can be shown similarly by following the same argument as in the proof of Proposition \ref{estw}. In particular,     
    \begin{align*}
       \sup_{t\in [\tau,T+\tau]} \|\Gamma(q)(t)\|_{H^{\ell}_{per}}^2&\leq \|w(\tau)\|_{H^{\ell}_{per}}^2+9\int_\tau^{T+\tau}(\|qq_x\|_{H^{\ell-2}_{per}}^2+\|(qv)_x\|^2_{H^{\ell-2}_{per}}+\|vv_x\|_{H^{\ell-2}_{per}}^2)ds\\
        &\leq \|w(\tau)\|_{H^{\ell}_{per}}^2+9(\frac{3}{2}\|q\|_{Y^{\ell}_{\tau,T}}^4+\frac{3}{2}\|v\|_{Y^{\ell}_{\tau,T}}^4)\\
        &\leq \frac{(M')^2}{K^2}+\frac{27}{2}(M')^4+\frac{27}{2K^2}(M')^4.
        \end{align*}

    On the other hand, by Young's convolution inequality,

       \begin{align*}
     &\int_{\tau}^{T+\tau}\|\Gamma(q)(t)\|_{H^{\ell}_{per}}^2dt\\&\leq \|w(\tau)\|_{H^{\ell}_{per}}^2+9\int_{\tau}^{T+\tau}\int_\tau^{t}e^{-\omega(t-s)}(\|qq_x(s)\|_{H^{\ell-2}_{per}}^2+\|(qv)_x(s)\|^2_{H^{\ell-2}_{per}}+\|vv_x(s)\|_{H^{\ell-2}_{per}}^2)dsdt\\
        &\leq \|w(\tau)\|_{H^{\ell}_{per}}^2+9C(\frac{3}{2}\|q\|_{Y^{\ell}_{\tau,T}}^4+\frac{3}{2}\|v\|_{Y^{\ell}_{\tau,T}}^4)\\
        &\leq \frac{(M')^2}{K^2}+\frac{27C}{2}(M')^4+\frac{27C}{2K^2}(M')^4.
    \end{align*}
    
We can firstly choose $K>1$ and then $M'<1$ so that  $ \frac{(M')^2}{K^2}+\frac{27}{2}(M')^4+\frac{27}{2K^2}(M')^4<(M')^2$ and $\frac{(M')^2}{K^2}+\frac{27C}{2}(M')^4+\frac{27C}{2K^2}(M')^4<(M')^2$, then $\Gamma$ is bounded on $Y^{\ell}_{\tau,T,M'}$.\\

Thus, for some $K>1$ and sufficiently small $M'$, there exists a unique $q\in Y^{\ell}_{\tau,T,M'}$ such that $q=\Gamma(q)$ and $$\|q\|_{Y^{\ell}_{\tau,T}}<K(\|w(\tau)\|_{H^{\ell}_{per}}^2+\|v\|_{Y^{\ell}_{\tau,T}}^2)^{\frac12},$$
 and $q=w$. 
\end{proof}

We also need an estimate of $(w, w_t)$ with an upper bound relying on $v$.

\begin{prop}\label{Prop4}
Let $\ell\geq 1$ and assume that $\sup_{t\geq 0}\|v\|_{Y^{\ell}_{t,T}}^2$ is sufficiently small. Then, 
$$\|w\|_{Y^{\ell}_{\tau,T}}+\|w_t\|_{Y^{\ell}_{\tau,T}}\leq C'\sup_{t\geq 0}\|v\|_{Y^{\ell}_{t,T}}^2.$$
\end{prop}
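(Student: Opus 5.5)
The plan is to obtain a bound on $w$ that does not depend on $\tau$. To do this, we propagate the smallness of $w$ from one time window $[nT,(n+1)T]$ to the next, using the exponential decay $\|e^{At}\|_{L(H^\ell_{per})}\le e^{-\Go t}$ from Proposition \ref{prop3-1}. Set $\epsilon:=\sup_{t\ge0}\|v\|_{Y^\ell_{t,T}}$.

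\textbf{Step 1 (one window).} Start from the Duhamel formula
$$w(t)=e^{A(t-\tau)}w(\tau)-\int_\tau^t e^{A(t-s)}(I-\Delta)^{-1}\bigl[ww_x+(vw)_x+vv_x\bigr](s)\,ds .$$
Apply Lemma \ref{bilinear} and the contraction argument of Proposition \ref{wYlT}. Keeping the decay factor on the homogeneous part, this gives
$$\|w((n+1)T)\|_{H^\ell_{per}}\le e^{-\Go T}\|w(nT)\|_{H^\ell_{per}}+C\bigl(\|w\|_{Y^\ell_{nT,T}}^2+\|w\|_{Y^\ell_{nT,T}}\epsilon+\epsilon^2\bigr),$$
together with
$$\|w\|_{Y^\ell_{nT,T}}\le K\bigl(\|w(nT)\|_{H^\ell_{per}}+\epsilon\bigr).$$

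\textbf{Step 2 (bootstrap across windows).} Make the inductive hypothesis $a_n:=\|w(nT)\|_{H^\ell_{per}}\le M\epsilon^2$, and note $a_0=0$. Substituting the second inequality of Step 1 into the first gives
$$a_{n+1}\le e^{-\Go T}a_n+C'(a_n^2+a_n\epsilon+\epsilon^2).$$
Since $e^{-\Go T}<1$, we can choose $M=2C'/(1-e^{-\Go T})$ and then $\epsilon$ small enough that $C'(M^2\epsilon^2+M\epsilon)\le \tfrac12(1-e^{-\Go T})M$. This closes the induction, so $a_n\le M\epsilon^2$ for all $n$.

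\textbf{Step 3 (the $Y$-norm of $w$).} The cross term $w\epsilon$ in the Step 1 estimate only gives $\|w\|_{Y^\ell_{nT,T}}\lesssim a_n+\epsilon$, which is too weak. To get $\epsilon^2$, I would redo the fixed-point argument of Proposition \ref{wYlT} in the ball of radius $M'=K'(a_n+\epsilon^2)$. The contraction estimate then reads
$$\|\Gamma q\|\le a_n+C(M'^2+M'\epsilon+\epsilon^2)\le M'$$
once $\epsilon$ is small, so the fixed point satisfies $\|w\|_{Y^\ell_{nT,T}}\le C\epsilon^2$. A general $\tau$ is covered by at most two consecutive windows $[nT,(n+1)T]$, which gives $\|w\|_{Y^\ell_{\tau,T}}\le C\epsilon^2$ uniformly in $\tau$.

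\textbf{Step 4 ($w_t$).} Use
$$w_t=(I-\Delta)^{-1}\bigl(-ww_x-(vw)_x-vv_x\bigr)+Aw$$
and argue as in the preceding proposition. The quadratic terms are $O(\epsilon^2)$, and $\|Aw\|_{H^\ell_{per}}\le C\|w\|_{H^\ell_{per}}=O(\epsilon^2)$. Adding this to Step 3 gives the claimed bound.

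The main obstacle is Step 2: the estimate must be uniform in $\tau$. Without the decay $e^{-\Go T}$ the accumulated terms of size $\epsilon^2$ would grow linearly in $n$. Therefore it is essential that the homogeneous part keeps the strict contraction factor, and that the nonlinear constants are independent of $\tau$; Lemma \ref{bilinear} supplies the latter.
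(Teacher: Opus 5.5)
Your proposal is correct and follows the paper's proof. Both arguments discretize at $t=kT$, combine Proposition \ref{wYlT} on each window with the decay factor $e^{-\Go T}$ to get a recursion for $\|w(kT)\|_{H^\ell_{per}}$ that contracts up to an $O(\epsilon^2)$ source, iterate to a uniform $O(\epsilon^2)$ bound, and then read off $w_t$ from the equation. The one variation is your Step 3: you re-run the fixed point in the ball of radius $K'(a_n+\epsilon^2)$, whereas the paper feeds the $O(\epsilon)$ window bound back into Duhamel at intermediate times to get $\|w(\tau)\|_{H^\ell_{per}}=O(\epsilon^2)$ for every $\tau$, which already gives $\|w\|_{Y^\ell_{\tau,T}}\le(1+\sqrt{T})\,O(\epsilon^2)$. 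Either route works.
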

\begin{proof}
    Consider the equation \eqref{eqsub2} on $[\tau, t+\tau]$ ($\tau,t>0$) is 
    \begin{align*}
        w(t+\tau)&=e^{At}w(\tau)-\int_{\tau}^{t+\tau}e^{A(t+\tau-s)}(I-\Delta)^{-1}[ww_x+(wv)_x](s)ds-\int_{\tau}^{t+\tau}e^{A(t+\tau-s)}(I-\Delta)^{-1}(vv_x)(s)ds \\
        &=:e^{At}w(\tau)+Q(t+\tau,\tau,w(\tau),v)+P(t+\tau,\tau,v).
    \end{align*}
    
    By Proposition \ref{wYlT}, it suffices to estimate $\|w(\tau)\|_{H^{\ell}_{per}}$ provided that $\|w(\tau)\|_{H^{\ell}_{per}}^2+\|v\|_{Y^{\ell}_{\tau,T}}^2$ is small.\\
    
    Define $w_k:=w(kT)$ for $k\in \N$. Then, applying the above relationship to $t=T$ and $\tau=(k-1)T$, we have 
    \begin{align*}
        w_k=e^{AT}w_{k-1}+Q(kT,(k-1)T,w_{k-1},v)+P(kT,(k-1)T,v)
    \end{align*}
    with $w_0=w(0)\equiv 0$.\\

    We first estimate $(w,w_t)$ when $\tau=(k-1)T$ for some $k\in \N$, and then for the case $(k-1)T<\tau<kT$ for some $k\in \N$. By Lemma \ref{bilinear}, 
    \begin{align*}
        &\|w_k\|_{H^{\ell}_{per}}\\
        &\leq e^{-\omega T}\|w_{k-1}\|_{H^{\ell}_{per}}+\int_{(k-1)T}^{kT}e^{-\omega (kT-s)}\big[\|ww_x\|_{H^{\ell-2}_{per}}+\|(wv)_x\|_{H^{\ell-2}_{per}}+\|vv_x\|_{H^{\ell-2}_{per}}\big]ds\\
        &\leq e^{-\omega T}\|w_{k-1}\|_{H^{\ell}_{per}}+\frac{C}{\sqrt{2\Go}}\Big(\|w\|_{Y^{\ell}_{(k-1)T,T}}^2+\|w\|_{Y^{\ell}_{(k-1)T,T}}\|v\|_{Y^{\ell}_{(k-1)T,T}}+\|v\|_{Y^{\ell}_{(k-1)T,T}}^2\Big) \\
        &\leq e^{-\omega T}\|w_{k-1}\|_{H^{\ell}_{per}}+\frac{C'}{\sqrt{2\Go}}\|w\|_{Y^{\ell}_{(k-1)T,T}}^2+\frac{C'}{\sqrt{2\Go}}\|v\|_{Y^{\ell}_{(k-1)T,T}}^2 \\
        &\leq  e^{-\omega T}\|w_{k-1}\|_{H^{\ell}_{per}}+\frac{C''K^2}{\sqrt{2\Go}}(\|w_{k-1}\|_{H^{\ell}_{per}}^2+\|v\|_{Y^{\ell}_{(k-1)T,T}}^2)+\frac{C'}{\sqrt{2\Go}}\|v\|_{Y^{\ell}_{(k-1)T,T}}^2 \\
        &\leq \bigg(e^{-\omega T}+\frac{C''K^2}{\sqrt{2\Go}}\|w_{k-1}\|_{H^{\ell}_{per}}\bigg)\|w_{k-1}\|_{H^{\ell}_{per}}+\frac{C''K^2+C'}{\sqrt{2\Go}}\|v\|_{Y^{\ell}_{(k-1)T,T}}^2
    \end{align*}    
    
If $\|w_{j}\|_{H^{\ell}_{per}}\leq \eta$ for $j\in \{1, \cdots,k-1\}$ and $\xi:=e^{-\omega T}+\frac{C''K^2}{\sqrt{2\Go}}\eta$, then we can rewrite the above inequality as 
$$\|w_{k}\|_{H^{\ell}_{per}}\leq \xi \|w_{k-1}\|_{H^{\ell}_{per}}+\bigg(\frac{C''K^2+C'}{2\sqrt{2\Go}}\bigg)\|v\|_{Y^{\ell}_{(k-1)T,T}}^2$$
Then, using iteration as in the proof of \cite{WZ}*{Lemma 3.4}, we have 
$$\|w_k\|_{H^{\ell}_{per}}\leq\bigg(\frac{C''K^2+C'}{2\sqrt{2\Go}}\bigg)\frac{1}{1-\xi}\sup_{t\geq 0}\|v\|_{Y^{\ell}_{t,T}}^2(\leq \eta).$$
provided that $\eta$ and $\sup_{t\geq 0}\|v\|_{Y^{\ell}_{t,T}}^2$ are sufficiently small so that $\xi<1$ and small enough to apply Proposition \ref{wYlT}.\\

For $\tau\in((k-1)T,kT)$, we write $\tau=(k-1)T+\Ga T$, where $\Ga\in(0,1)$, and
$$w(\tau)=e^{A(\Ga T)}w((k-1)T)-\int_{(k-1)T}^{(k-1)T+\Ga T}e^{A(\tau-s)}(I-\Delta)^{-1}[ww_x+(wv)_x+vv_x]ds.$$

A similar reasoning gives us 
\begin{align*}
    \|w(\tau)\|_{H^{\ell}_{per}}&\leq e^{-\omega \Ga T}\|w_{k-1}\|_{H^{\ell}_{per}}+\frac{3}{2\sqrt{2\Go}}\|w\|_{Y^{\ell}_{(k-1)T,\Ga T}}^2+\frac{3}{2\sqrt{2\Go}}\|v\|_{Y^{\ell}_{(k-1)T,\Ga T}}^2\\
    &\leq e^{-\omega \Ga T}\bigg(\frac{C''K^2+C'}{2\sqrt{2\Go}}\bigg)\frac{1}{1-\xi}\|v\|_{Y^{\ell}_{(k-2)T,T}}^2+\frac{3}{2\sqrt{2\Go}}\|w\|_{Y^{\ell}_{(k-1)T,\Ga T}}^2+\frac{3}{2\sqrt{2\Go}}\|v\|_{Y^{\ell}_{(k-1)T,\Ga T}}^2 \\
    &\leq e^{-\omega \Ga T}\bigg(\frac{C''K^2+C'}{2\sqrt{2\Go}}\bigg)\frac{1}{1-\xi}\|v\|_{Y^{\ell}_{(k-2)T,T}}^2+\frac{3}{2\sqrt{2\Go}}\|w\|_{Y^{\ell}_{(k-1)T,T}}^2+\frac{3}{2\sqrt{2\Go}}\|v\|_{Y^{\ell}_{(k-1)T,T}}^2 \\
    &\leq e^{-\omega \Ga T}\bigg(\frac{C''K^2+C'}{2\sqrt{2\Go}}\bigg)\frac{1}{1-\xi}\|v\|_{Y^{\ell}_{(k-2)T,T}}^2+\frac{3}{2\sqrt{2\Go}}(\|w_{k-1}\|^2_{H^{\ell}}+\|v\|_{Y^{\ell}_{(k-1)T,T}}^2)+\frac{3}{2\sqrt{2\Go}}\|v\|_{Y^{\ell}_{(k-1)T,T}}^2 \\
        &\leq C_{c,K,\xi}\sup_{t\geq 0}\|v\|_{Y^{\ell}_{t,T}}^2.
\end{align*}

 Combined with \eqref{estw}, it is inferred that 
 
\begin{align*}
    \|w_t\|_{Y^{\ell}_{\tau,T}}^2\leq C''\sup_{t\geq 0}\|v\|_{Y^{\ell}_{t,T}}^8+16C'\sup_{t\geq 0}\|v\|_{Y^{\ell}_{t,T}}^4.
\end{align*}

Since we can always let $\|v\|_{Y^{\ell}_{t,T}}$ sufficiently small for $t$: $\sup_{t\geq 0}\|v\|_{Y^{\ell}_{t,T}}<1$, we have \begin{align*}
    \|w_t\|_{Y^{\ell}_{\tau,T}}\leq C'''\sup_{t\geq 0}\|v\|_{Y^{\ell}_{t,T}}^2.
\end{align*}
\end{proof}

Owing to the above lemmas and propositions, we already paved all necessary estimates on linear and nonlinear parts of the solution, leading to the proof of Theorem \ref{thm1}. 

\begin{proof}[Proof of Theorem \ref{thm1}]\ \\
    Since $u(t)=v(t)+w(t)$, by Proposition \ref{wYlT} we obtain
    \begin{align*}
        \|u\|_{Y^{\ell}_{\tau,T}}&\leq \|v\|_{Y^{\ell}_{\tau,T}}+\|w\|_{Y^{\ell}_{\tau,T}} \\&\leq C_{K,c} (\|\phi\|_{H^{\ell}_{per}}+\|f\|_{L^2([0,\tau+T];H^{\ell-2}_{per})})\leq C_{K,c} (\|\phi\|_{H^{\ell}_{per}}+\|f\|_{L^2([0,\infty);H^{\ell-2}_{per})})
    \end{align*}
    provided that $ \|\phi\|_{H^{\ell}_{per}}+\|f\|_{L^2([0,\infty);H^{\ell-2}_{per})}$ is small so that $\sup_{t\geq 0}\|v\|_{Y^{\ell}_{t,T}}$ is small enough to apply Proposition \ref{Prop4}.  \\

    In the case of $f\in L^{\infty}([0,\infty);H^{\ell-2}_{per})$, by Remark \ref{rmkforfinLinfty}, it suffices to take $\|\phi\|_{H^{\ell}_{per}}+\|f\|_{L^\infty([0,\infty);H^{\ell-2}_{per})}$, but the smallness of this constant is depending on $T$ and independent of $\tau$.
\end{proof}

\section{Existence and stability of time-periodic solution with $\ell\ge 1$}\label{Sec4}

In this section, we assume that $f$ has time-periodicity: $f(x,t+\theta)=f(x,t)$ for all $t\geq 0$, i.e., $f(\cdot,t)$ has period $\theta$. Note that if $w(x,t):=u(x,t+\Gth) -u(x,t)$ we can see that $w$ solves 
\begin{equation}\label{eqmainperi}
\begin{cases}
    w_t-w_{xxt}+w_x+\frac{1}{2}([u(x,t+\Gth)+u(x,t)]w)_x+\Gb w-\Gg w_{xx}=0,  \quad (x, t)\in \bT\times [0, \infty),&\\
    w(x,0)=u(x,\theta)-\phi(x), &\\
    \lan w(t)\ran=0 \quad \forall t\geq 0.&
    \end{cases}
\end{equation}
Therefore, it suffices to consider
\begin{equation}\label{eqmainperi1}
\begin{cases}
    w_t-w_{xxt}+w_x+(wa)_x+\Gb w-\Gg w_{xx}=0,  \quad (x, t)\in \bT\times [0, \infty),&\\
    w(x,0)=\psi(x),&\\
    \lan w(t)\ran =0, \quad \forall t\geq 0&
    \end{cases}
\end{equation}
for some function $a$, and then we can apply the results to $a(x,t)=\frac{u(x,t+\Gth)+u(x,t)}{2}$. Since \eqref{eqmainperi1} is a linear PDE, the solution can be written as 
$$w(t)=e^{At}\psi(x)-\int_0^te^{A(t-s)}(I-\Delta)^{-1}(aw)_x(s)ds.$$

\begin{lemma}\label{aprieqap1}
    Let $\ell\geq 1$. Suppose $\|a\|_{Y^{\ell}_{0,T}}$ is sufficiently small, then 
    \begin{align}\label{w_l1}
        \|w\|_{Y^{\ell}_{0,T}}\leq C\|\psi\|_{H^{\ell}_{per}}
    \end{align}
    for some $C$ independent of $T$ and $\|a\|_{Y^{\ell}_{0,T}}$.\\

    If $\sup_{t\geq t_0}\|a\|_{Y^{\ell}_{t,T}}$ is small enough, then for any $\tau\geq t_0$
   \begin{align} \label{WYtauTtoWtau}
       \|w\|_{Y^{\ell}_{\tau,T}}\leq C\|w(\tau)\|_{H^{\ell}_{per}}
   \end{align}
   and
      \begin{align} \label{WtYtauTtoWtau}
       \|w_t\|_{Y^{\ell}_{\tau,T}}\leq C'\|w(\tau)\|_{H^{\ell}_{per}}.
   \end{align}
\end{lemma}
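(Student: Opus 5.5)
The plan is to treat \eqref{eqmainperi1} as a linear perturbation of the damped linear BBM flow and run the same fixed-point and energy scheme as in Proposition \ref{estw} and Proposition \ref{wYlT}, now with the bilinear term $(aw)_x$ linear in $w$. For \eqref{w_l1}, I would define on $Y^{\ell}_{0,T}$ the affine map
$$\Lambda(q)(t):=e^{At}\psi-\int_0^t e^{A(t-s)}(I-\Delta)^{-1}(aq)_x(s)\,ds .$$
By Proposition \ref{prop3-1}, $\|e^{At}\psi\|_{H^{\ell}_{per}}\le e^{-\Go t}\|\psi\|_{H^{\ell}_{per}}$. Its $L^2_t$ norm on $[0,T]$ is therefore bounded by $(2\Go)^{-1/2}\|\psi\|_{H^{\ell}_{per}}$, a bound independent of $T$. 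Hence $\|e^{A\cdot}\psi\|_{Y^{\ell}_{0,T}}\le C_0\|\psi\|_{H^{\ell}_{per}}$.

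For the Duhamel term I would argue exactly as in Proposition \ref{wYlT}. First, $(I-\Delta)^{-1}$ maps $H^{\ell-2}_{per}$ to $H^{\ell}_{per}$. Then Cauchy--Schwarz against $e^{-\Go(t-s)}$ handles the sup-in-time part, and Young's convolution inequality handles the $L^2$-in-time part. The resulting bound is by $C\big(\int_0^T\|(aq)_x\|^2_{H^{\ell-1}_{per}}ds\big)^{1/2}$. Lemma \ref{bilinear} with $\ell'=\ell-1$ bounds this by $C_1\|a\|_{Y^{\ell}_{0,T}}\|q\|_{Y^{\ell}_{0,T}}$. Since the map is affine, the difference $\Lambda(q)-\Lambda(q')$ is controlled by $C_1\|a\|_{Y^{\ell}_{0,T}}\|q-q'\|_{Y^{\ell}_{0,T}}$. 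If $C_1\|a\|_{Y^{\ell}_{0,T}}\le 1/2$, then $\Lambda$ is a contraction on all of $Y^{\ell}_{0,T}$. Its fixed point, which coincides with the mild solution $w$ by uniqueness, satisfies $\|w\|\le C_0\|\psi\|+\tfrac12\|w\|$, which gives \eqref{w_l1} with $C=2C_0$.

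\textbf{Main obstacle: constants independent of $T$.} This is where I expect the real difficulty. It requires that the constant $C''$ in Lemma \ref{bilinear} not depend on $T$, which that lemma asserts, and that the semigroup constants not depend on $T$, which holds because the decay rate $\Go$ is uniform.

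For \eqref{WYtauTtoWtau}, I would repeat the argument with initial time $\tau\ge t_0$. I would use the representation
$$w(t)=e^{A(t-\tau)}w(\tau)-\int_\tau^t e^{A(t-s)}(I-\Delta)^{-1}(aw)_x\,ds .$$
The same contraction on $Y^{\ell}_{\tau,T}$ works under the smallness $C_1\sup_{t\ge t_0}\|a\|_{Y^{\ell}_{t,T}}\le 1/2$. Lemma \ref{bilinear} is applied on $[\tau,\tau+T]$, where its constant is independent of $s$.

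For \eqref{WtYtauTtoWtau}, I would use the equation in the form $w_t=Aw-(I-\Delta)^{-1}(aw)_x$, as in the proposition estimating $w_t$. The first piece satisfies $\|Aw(t)\|_{H^{\ell}_{per}}\le C\|w(t)\|_{H^{\ell}_{per}}$, since $A$ is bounded on $H^{\ell}_{per}$. For the second piece, $\ell\ge1$ gives
$$\|(aw)_x\|_{H^{\ell-2}_{per}}\le\|aw\|_{H^{\ell-1}_{per}}\le C\|a\|_{H^{\ell}_{per}}\|w\|_{H^{\ell}_{per}},$$
valid pointwise in time. Taking the sup over $[\tau,\tau+T]$ controls the $\sup_t$ part by $C(1+\|a\|_{Y^{\ell}_{\tau,T}})\|w\|_{Y^{\ell}_{\tau,T}}$. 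The $L^2_t$ part is handled the same way, putting the sup norm on $a$ and the $L^2$ norm on $w$. Combining with \eqref{WYtauTtoWtau} gives the claimed estimate.
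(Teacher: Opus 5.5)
Your proposal is correct and follows essentially the same route as the paper. You use the semigroup decay from Proposition \ref{prop3-1}, Cauchy--Schwarz and Young's convolution inequality for the Duhamel term, and Lemma \ref{bilinear} to bound that term by $\|a\|_{Y^{\ell}}\|w\|_{Y^{\ell}}$; you then absorb it under smallness of $a$, and use the identity $w_t = Aw - (I-\Delta)^{-1}(aw)_x$ pointwise in time for \eqref{WtYtauTtoWtau}. The only difference is cosmetic: you phrase the absorption as a contraction of the affine Duhamel map, which also gives existence and uniqueness in $Y^{\ell}$, whereas the paper absorbs directly in an a priori estimate for the given $w$.
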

\begin{proof}
    From the semigroup solution of $w$, we have
    \begin{align*}
        \|w(t)\|_{H^{\ell}_{per}}^2&\leq 4e^{-\omega t}\|\psi\|_{H^{\ell}_{per}}^2+4\bigg(\int_0^te^{-\omega (t-s)}\|(aw)_x(s)\|_{H^{\ell-2}_{per}}ds\bigg)^2
    \end{align*}
    Then, following the argument as in the proof of Proposition \ref{estw}, one has that 
    $$\|w\|_{Y^{\ell}_{0,T}}^2\leq C\|\psi\|_{H^{\ell}_{per}}^2+C_{\Go}\|a\|_{Y^{\ell}_{0,T}}^2\|w\|_{Y^{\ell}_{0,T}}^2.$$
    Therefore, if $\|a\|_{Y^{\ell}_{0,T}}^2\leq \frac{1}{2C_{\Go}}$, then 
    $$\|w\|_{Y^{\ell}_{0,T}}^2\leq C'\|\psi\|_{H^{\ell}_{per}}^2,$$
    where $C'$ is independent of $a$.\\

     To estimate $\|w\|_{Y^{\ell}_{\tau,T}}$, we first note that
    $$w(t):=e^{A(t-\tau)}w(\tau)-\int_{\tau}^{t}e^{A(t-s)}(I-\Delta)^{-1}(aw)_x(s)ds.$$
    Then, 
    \begin{align*}
        \|w(t)\|_{H^{\ell}_{per}}\leq e^{-\omega (t-\tau)}\|w(\tau)\|_{H^{\ell}_{per}}+\int_{\tau}^{t}e^{-\omega (t-s)}\|(aw)_x(s)\|_{H^{\ell-2}_{per}}ds.
    \end{align*}
    Taking the supremum over $t\in [\tau,T+\tau]$, we have 
    \begin{align*}
       \sup_{t\in [\tau,T+\tau]} \|w(t)\|_{H^{\ell}_{per}}&\leq \|w(\tau)\|_{H^{\ell}_{per}}+\sup_{t\in [\tau, T+\tau]}\int_{\tau}^{t}e^{-\Go (t-s)}\|(aw)_x(s)\|_{H^{\ell-2}_{per}}ds\\
       &\leq \|w(\tau)\|_{H^{\ell}_{per}}+C''\|a\|_{Y^{\ell}_{\tau,T}}\|w\|_{Y^{\ell}_{\tau,T}}.
    \end{align*}
    
    If  we take $L^2([\tau,T+\tau])$ norm both sides, we have
     \begin{align*}
      \|w\|_{L^2([\tau,T+\tau];H^{\ell}_{per})}&\leq  \|e^{-\omega (t-\tau)}\|_{L^2([\tau,T+\tau])}\|w(\tau)\|_{H^{\ell}_{per}}+\bigg\|\int_{\tau}^{t}e^{-\omega (t-s)}\|(aw)_x(s)\|_{H^{\ell-2}_{per}}ds\bigg\|_{L^2([\tau,\tau+T])}\\
      &\leq \frac{1}{\sqrt{2\Go}}\|w(\tau)\|_{H^{\ell}_{per}}+C'''\|a\|_{Y^{\ell}_{\tau,T}}\|w\|_{Y^{\ell}_{\tau,T}}.
    \end{align*}
    Therefore, 
    $$\|w\|_{Y^{\ell}_{\tau,T}}\leq C^{(4)}\big(\|w(\tau)\|_{H^{\ell}_{per}}^2+\|a\|_{Y^{\ell}_{\tau,T}}^2\|w\|^2_{Y^{\ell}_{\tau,T}}\big)^{\frac{1}{2}}.$$
    If $\ds \sup_{t\geq t_0}\|a\|_{Y^{\ell}_{t,T}}$ is small enough, it can be inferred that  $$\|w\|_{Y^{\ell}_{\tau,T}}\leq C^{(5)}\|w(\tau)\|_{H^{\ell}_{per}}$$
holds for all $\tau\geq t_0$.\\

To estimate $\|w_t\|_{Y^{\ell}_{\tau,T}}$, since 
$$w_t=(I-\Delta)^{-1}[(aw)_x]+Aw,$$ 
we have
$$\|w_t(t)\|_{H^{\ell}_{per}}\leq \|(aw)_x(t)\|_{H^{\ell-2}_{per}}+C\|w(t)\|_{H^{\ell}_{per}}.$$
Now we take $L^{2}([\tau,T])$ and $L^{\infty}([\tau,T])$, and we have 
$$\|w_t\|_{Y^{\ell}_{\tau,T}}\leq C'\|a\|_{Y^\ell_{\tau,T}}\|w\|_{Y^{\ell}_{\tau,T}}+C\|w\|_{Y^{\ell}_{\tau,T}}\leq C''\|w\|_{Y^{\ell}_{\tau,T}}\leq C'''\|w(\tau)\|_{H^{\ell}_{per}}$$
provided that $\sup_{t\geq t_0}\|a\|_{Y^{\ell}_{\tau,T}}$ is small enough.
\end{proof}

We are now ready to prove Theorems \ref{thm2}-\ref{thm4}.

\begin{proof}[Proof of Theorem \ref{thm2}]\ \\
The proof of this theorem is essentially the same as in \cite{LW}; however, for the periodic domain, we do not need to apply any interpolation theorems to obtain the intermediate values of $\ell$. \\

Choose $\|\phi\|_{H^{\ell}_{per}}^2+\|f\|_{L^2([0,T];H^{\ell-2}_{per})}^2$  or $\|\phi\|_{H^{\ell}_{per}}^2+\|f\|_{L^\infty([0,\infty);H^{\ell-2}_{per})}^2$  small enough so that $\sup_{t\geq 0}\|\frac{u(x,t+\theta)+u(x,t)}{2}\|_{Y^{\ell}_{t,T}}$ is small enough to apply Lemma \ref{aprieqap1} with the aid of Theorem \ref{thm1}. We also take $\psi(x)=u(x,\theta)-\phi(x)$.\\

Note that
$$w(t+\tau)=e^{At}w(\tau)-\int_{0}^{t}e^{A(t-s)}(I-\Delta)^{-1}(aw)_x(s+\tau)ds.$$

We define $w_k=w(kT)$ for $k\in \N$. If $\tau=T$, then 
$$w_k=e^{AT}w_{k-1}-\int_{(k-1)T}^{kT}e^{A(kT-s)}(I-\Delta)^{-1}(aw)_x(s)ds.$$

Then for $\ell \in [1,\infty)$, we have 
\begin{align*}
    \|w_k\|_{H^{\ell}_{per}}&\leq e^{-\omega T}\|w_{k-1}\|_{H^{\ell}_{per}}+\int_{(k-1)T}^{kT}e^{-\omega (kT-s)}\|(aw)_x(s)\|_{H^{\ell-2}_{per}}ds\\
    &\leq  e^{-\omega T}\|w_{k-1}\|_{H^{\ell}_{per}}+\frac{1}{\sqrt{2\Go}}\|a\|_{Y^{\ell}_{(k-1)T,T}}\|w\|_{Y^{\ell}_{(k-1)T,T}}\\
    &\leq   e^{-\omega T}\|w_{k-1}\|_{H^{\ell}_{per}}+\frac{C'}{\sqrt{2\Go}}\|a\|_{Y^{\ell}_{(k-1)T,T}}\|w_{k-1}\|_{H^{\ell}_{per}} \\
    &= \bigg(e^{-\omega T}+\frac{C'}{\sqrt{2\Go}}\sup_{t\geq 0}\|a\|_{Y^{\ell}_{t,T}}\bigg)\|w_{k-1}\|_{H^{\ell}_{per}},
\end{align*}

By taking $\sup_{t\geq 0}\|a\|_{Y^{\ell}_{t,T}}$ small enough, we may assume that $e^{-\omega T}+\frac{C'}{\sqrt{2\Go}}\sup_{t\geq 0}\|a\|_{Y^{\ell}_{t,T}}=:\mu<1$. Note that $\mu$ is independent of $\tau$. Therefore, 
\begin{align*}
    \|w_k\|_{H^{\ell}_{per}}\leq \mu\|w_{k-1}\|_{H^{\ell}_{per}}\leq \mu^2\|w_{k-2}\|_{H^{\ell}_{per}}\leq \cdots\leq \mu^{k-1}\|w_1\|_{H^{\ell}_{per}}\leq \mu^k\|\psi\|_{H^{\ell}_{per}}.
\end{align*}
We see that 
$$\|w(kT)\|_{H^{\ell}_{per}}\leq e^{-kT(\ln(1/\mu)/T)}\|\psi\|_{H^{\ell}_{per}}.$$

If $\tau=kT+t'$, where $t'\in(0,T)$ and $k\in \N$, since 
\begin{align*}
   \|w(\tau)\|_{H^{\ell}_{per}}\leq \|w\|_{Y^{\ell}_{kT,t'}} \leq C'\|w_{k}\|_{H^{\ell}_{per}}&\leq C' e^{-kT(\ln(1/\mu)/T)}\|\psi\|_{H^{\ell}_{per}}\\
   &\leq C'e^{t'(\ln(1/\mu)/T)}e^{-\tau(\ln(1/\mu)/T)}\|\psi\|_{H^{\ell}_{per}}\\
    &\leq C'\frac{1}{\mu}e^{-\tau(\ln(1/\mu)/T)}\|\psi\|_{H^{\ell}_{per}}.
\end{align*}
Note that in the last step we used the fact that $t'\in (0,T)$.
Therefore, for any $t\geq 0$, we have 
$$\|w(t)\|_{H^{\ell}_{per}}\leq C'\frac{1}{\mu}e^{-t(\ln(1/\mu)/T)}\|\psi\|_{H^{\ell}_{per}},$$
and 
taking $L^2([\tau,T+\tau])$ as well as $L^{\infty}([\tau,T+\tau])$ we have 
$$\|w\|_{Y^{\ell}_{\tau,T}}\leq C'\bigg(\frac{\ln(1/\mu)}{T}+1\bigg)\frac{1}{\mu}e^{-\tau(\ln(1/\mu)/T)}\|u(\theta)-\phi\|_{H^{\ell}_{per}}.$$
These two inequalities are also true for $w_t$ with a factor of 3 of the original implicit majorizing constant. The proof is complete with the aid of Theorem \ref{thm1} and the smallness of $\|\phi\|_{H^{\ell}_{per}}+\|f\|_{L^\infty([0,\infty);H^{\ell-2}_{per})}$. \\

\end{proof}

\begin{proof}[Proof of Theorem \ref{thm3}]\ \\
Let $u_k(x)=u(x,k\theta)$ for $k\in \N$. By Theorem \ref{thm1}, we see $u_k\in H^{\ell}_{per}$. We will first show that $\{u_k\}_k$ is Cauchy in $H^{\ell}_{per}$, then we will show that the solution with initial condition $\lim_k u_k$ will be periodic.\\

    Let $m,n$ be integers. Then,
    \begin{align*}
        \|u_{n+m}-u_n\|_{H^{\ell}_{per}}&\leq \sum_{i=0}^{m-1}\|u_{n+i+1}-u_{n+i}\|_{H^{\ell}_{per}}\\
        &\leq \sum_{i=0}^{m-1}\|w((n+i)\theta)\|_{H^{\ell}_{per}}\\
        &\leq \sum_{i=0}^{m-1}C_{\mu,T}e^{-(n+i)\theta(\ln(1/\mu)/T)}\|u(\theta)-\phi\|_{H^{\ell}_{per}}\\
        &\leq C_{\mu,T}\|u(\theta)-\phi\|_{H^{\ell}_{per}}\frac{e^{-n\theta(\ln(1/\mu)/T)}}{1-e^{-\theta(\ln(1/\mu)/T)}}\to 0
    \end{align*}
    as $n\to\infty$. Therefore, $\{ u_{k}\}_k$ is a Cauchy sequence in $H^{\ell}_{per}$ and we will denote $\widetilde{\phi}:=\lim_{n\to\infty}u_n$ in $H^{\ell}_{per}$. We can see that $\|\widetilde{\phi}\|_{H^{\ell}_{per}}\leq C(\|\phi\|_{H^{\ell}_{per}}+\|f\|_{L^\infty([0,\infty);H^{\ell-2}_{per})}).$\\

    Now suppose $\widetilde{u}(x,t)$ be the solution to \eqref{main1} with initial condition $u(x,0)=\widetilde{\phi}$. Then
    \begin{align*}
    \|\widetilde{u}(\cdot,\theta)-\widetilde{\phi}(\cdot)\|_{H^{\ell}_{per}} &=\|\widetilde{u}(\cdot,\theta)-\widetilde{u}(\cdot,0)\|_{H^{\ell}_{per}}\\
        &\leq \|\widetilde{u}(\cdot,\theta)-u_{n+1}\|_{H^{\ell}_{per}}+\|u_{n+1}-u_n\|_{H^{\ell}_{per}}+\|u_n-\widetilde{u}(\cdot,0)\|_{H^{\ell}_{per}}\\
        &\to 0
    \end{align*}
    by passing $n\to \infty$ by the fact that $\{u_k\}_k$ is Cauchy and $\widetilde{\phi}:=\lim_{n\to\infty}u_n$ in $H^{\ell}_{per}$. To see the first term is small, note that by the mild solution of $\widetilde{u}$ and $u$ with $a(s)=\frac{\widetilde{u}(s)+u(n\theta+s)}{2}$, 
    \begin{align*}
        \widetilde{u}(\cdot,\theta)-u_{n+1}&= e^{A\theta}(\widetilde{\phi}-u_n)-\int_{0}^{\theta}e^{A(\theta-s)}(I-\Delta)^{-1}(a(s)[\widetilde{u}(s)-u(n\theta+s)])_xds
    \end{align*}
    and 
    \begin{align*}
        \|\widetilde{u}(\cdot,\theta)-u_{n+1}\|_{H^{\ell}_{per}}&\leq  C'e^{-\omega \theta}\|\widetilde{\phi}-u_n\|_{H^{\ell}_{per}}
    \end{align*}
    provided that $\sup_{t\geq 0}\|u\|_{Y^{\ell}_{t,\Gth}}+\sup_{t\geq 0}\|\widetilde{u}\|_{Y^{\ell}_{t,\Gth}}$ small enough by Lemma \ref{aprieqap1} if $\ell\geq 1$. \\

    We now show the local stability of $\widetilde{u}$. Consider $w(x,t)=u(x,t)-\widetilde{u}(x,t)$ and $a(x,t)=\frac{1}{2}(u(x,t)+\widetilde{u}(x,t))$. Then we see that $w$ solves \eqref{eqmainperi1} with $\psi(x)=\phi(x)-\widetilde{\phi}(x)$. By Theorem \ref{thm1}, we can take both $\sup_{t\geq 0}\|u\|_{Y^{\ell}_{t,T}}+\sup_{t\geq 0}\|\widetilde{u}\|_{Y^{\ell}_{t,T}}$ small enough to apply Theorem \ref{thm2}. Therefore, we have 
    $$\|u-\widetilde{u}\|_{Y^{\ell}_{\tau,T}}\leq C'_{\mu,T}e^{-\tau(\ln(1/\mu)/T)}\|\psi\|_{H^{\ell}_{per}},$$
    which is the exponential convergence.  
\end{proof}

\begin{proof}[Proof of Theorem \ref{thm4}]\ \\
To prove the global stability, we will first establish the case $\ell=1$, and then we will establish a method to reduce the higher-order cases $\ell$ to lower-order cases. Different from \cite{WZ}, differentiating $t$ would not give us higher-order regularity; we need to estimate the norm of $u$ in $H^{\ell}_{per}$ directly. The method of the proof is essentially the same as \cite{LW}; however, tracing the parameters $\Gb,\ \Gg$ is needed in our case. \\

In order to prove the global exponential stability for $H^{\ell}_{per}$, it suffices to show the global absorbing property

$$\|u(t)\|_{H^{\ell}_{per}}\leq e^{-ct}\|\phi\|_{H^{\ell}_{per}}+C\delta,$$
provided that $\sup_{t}\|f(t)\|_{H^{\ell-2}_{per}}\leq \delta$. Here we will always assume that $\delta <1$ for simplicity. We will explain how to obtain the global stability for higher $\ell$ after showing the global absorbing property for $\ell=1$.\\


We first consider $\ell=1$. If we multiply \eqref{main1} by $u$ and then integrate over $\bT$, then we have  
\begin{align*}
    \frac{1}{2}\frac{d}{dt}(\|u(t)\|_{L^2_{per}}^2+\|u_x(t)\|^2_{L^2_{per}})+\Gb \|u(t)\|_{L^2_{per}}^2+\Gg \|u_x(t)\|_{L^2_{per}}^2&\leq \frac{b}{4}(\|u(t)\|^2_{L^2_{per}}+\|u_x(t)\|^2_{L^2_{per}})+\frac{C}{b}\|f(t)\|_{H^{-1}_{per}}^2.
\end{align*}
By rearranging the terms and adding and subtracting $\Gb\|u_x(t)\|_{L^2_{per}}^2$ with the usage of Poincar\'e inequality, we have 
\begin{align*}
    &\frac{1}{2}\frac{d}{dt}(\|u(t)\|_{H^1_{per}}^2)+[\Gb-\frac{b}{4}+\frac{4\pi^2}{4\pi^2+1}(\Gg-\Gb)](\|u(t)\|_{H^1_{per}}^2)  \leq \frac{C}{b}\|f(t)\|_{H^{-1}_{per}}^2.
\end{align*}
Note that $\frac{4\pi^2}{4\pi^2+1}(\Gg-\Gb)+\Gb>0$ iff $\Gg>\frac{-\Gb}{4\pi^2}$, which we have assumed.

If $\Gg,\Gb>0$, we have the following inequality instead:
\begin{align*}
    &\frac{1}{2}\frac{d}{dt}(\|u(t)\|_{H^1_{per}}^2)+[\min\{\Gg,\Gb\}-\frac{b}{4}](\|u(t)\|_{H^1_{per}}^2)\leq \frac{C}{b}\|f(t)\|_{H^{-1}_{per}}^2.
\end{align*}
We now choose $\frac{b}{4}<\frac{4\pi^2}{4\pi^2+1}(\Gg-\Gb)+\Gb$ (or $\frac{b}{4}<\min\{\Gg,\Gb\}$ accordingly), and we can conclude that 
$$\|u(t)\|_{H^1_{per}}^2\leq C_be^{-c_b t}\|\phi\|_{H^1_{per}}^2+C_b'\sup_{t\geq 0}\|f(t)\|_{H^{-1}_{per}}^2.$$

Therefore, if $\delta>0$ is small such that there is a  $t_0>0$ that $\|u(t_0)\|_{H^1_{per}}^2$ is small enough so that $\ds \sup_{t\geq t_0}\|u\|_{Y^{1}_{t,T}}+\sup_{t\geq t_0}\|\widetilde{u}\|_{Y^{1}_{t,T}}$ is sufficiently small to apply Theorem \ref{thm3}. More precisely, for given $\phi \in H^1_{per}$ and sufficiently small $\ds\sup_{t\geq 0}\|f(t)\|_{H^{-1}_{per}}^2$, then there exists $t_0$ such that $\|u(t_0)\|_{H^1_{per}}$ is small, and we can apply Theorem \ref{thm3} to $u(t_0+t)$ and $u-\widetilde{u}\to 0$ in $Y^{\ell}_{\tau,T}$ as $\tau\to\infty$.\\


For $\ell>1$, noting that
\begin{align*}
(I-\GD)^{\frac{\ell}{2}}u_t+(I-\GD)^{\frac{\ell}{2}}u+(I-\GD)^{\frac{\ell-2}{2}}\partial_x[u^2]+(\Gb-\Gg) (I-\GD)^{\frac{\ell-2}{2}}u+\Gg (I-\GD)^{\frac{\ell}{2}}u=(I-\GD)^{\frac{\ell-2}{2}}f(t).
\end{align*}
and multiplying both sides by $(I-\Delta)^{\frac{\ell}{2}}u$, we have 
\begin{align*}
    &\frac{1}{2}\frac{d}{dt}\|u(t)\|_{H^{\ell}_{per}}^2+\|u(t)\|_{H^{\ell}_{per}}^2-\lan u^2(t),\partial_xu(t)\ran_{H^{\ell-1}_{per}}-(\Gb-\Gg)\|u(t)\|_{H^{\ell-1}_{per}}^2+\Gg\|u(t)\|_{H^{\ell}_{per}}^2\\
    &=\lan (I-\Delta)^{\frac{\ell-2}{2}}f(t), (I-\Delta)^{\frac{\ell}{2}}u(t)\ran_{L^2_{per}}.
\end{align*}
Then, by choosing $C_{\Gg}$ such that $AB\leq C_{\Gg}A^2+\Gg B^2$,
\begin{align*}
    \frac{1}{2}\frac{d}{dt}\|u(t)\|_{H^{\ell}_{per}}^2+\|u(t)\|_{H^{\ell}_{per}}^2 &\leq \|u^2(t)\|_{H^{\ell-1}_{per}}\|u(t)\|_{H^{\ell}_{per}}+(\Gg-\Gb) \|u(t)\|_{H^{\ell-1}_{per}}^2-\Gg\|u(t)\|_{H^{\ell}_{per}}^2+\|f(t)\|_{H^{\ell-2}_{per}}\|u(t)\|_{H^{\ell}_{per}} \\
     &\leq C_{\Gg}\|u^2(t)\|^2_{H^{\ell-1}_{per}}+(\Gg-\Gb)\|u(t)\|_{H^{\ell-1}_{per}}^2+C'_{\Gg}\|f(t)\|_{H^{\ell-2}_{per}}^2.
\end{align*}

For $1<\ell\leq 2$, if $\Gg-\Gb\geq 0$,
\begin{align} \label{13-1}
    &\frac{d}{dt}\|u(t)\|_{H^{\ell}_{per}}^2+2\|u(t)\|_{H^{\ell}_{per}}^2 \nonumber\\
     &\leq 2C_{\Gg}\|u(t)\|^4_{H^{1}_{per}}+2(\Gg-\Gb)\|u(t)\|_{H^{1}_{per}}^2+2C'_{\Gg}\|f(t)\|_{H^{\ell-2}_{per}}^2 \nonumber\\
     &\leq 2C_{\Gg}\big(C_be^{-c_bt}\|\phi\|_{H^1_{per}}^2+C_b'\sup_{s\geq 0}\|f(s)\|_{H^{-1}_{per}}^2\big)^2+2(\Gg-\Gb)\big(C_be^{-c_bt}\|\phi\|_{H^1_{per}}^2+C_b'\sup_{s\geq 0}\|f(s)\|_{H^{-1}_{per}}^2\big)+2C'_{\Gg}\|f(t)\|_{H^{\ell-2}_{per}}^2 \nonumber\\
     &\leq C_{\Gg,b}e^{-2c_bt}\|\phi\|_{H^1_{per}}^4+(\Gg-\Gb)C_{b}'e^{-c_bt}\|\phi\|_{H^1_{per}}^2+C_{b,\Gg,\Gb} \sup_{s\geq 0}\|f(s)\|_{H^{\ell-2}_{per}}^2,
\end{align}
in which we have used the fact that $\ell-1\leq 1<\ell$, $H^1_{per}$ forms an algebra, $\sup_{t\geq0 }\|f(t)\|_{H^{\ell-2}_{per}}<1$, and $H^{s+\Ge}_{per}\subset H^{s}_{per}$. If $\Gg-\Gb<0$, then one can drop the terms with coefficient $\Gg-\Gb$.\\

Now multiplying \eqref{13-1} both sides by $e^t$ and integrating over $[0,t]$, one can derive
\begin{align*}
   &e^t\|u(t)\|_{H^{\ell}_{per}}^2-\|\phi\|_{H^{\ell}_{per}}^2 \\
    &\leq \int_0^t \bigg[C_{\Gg,b}e^{(1-2c_b)s}\|\phi\|_{H^1_{per}}^4+(\Gg-\Gb)C_{b}'e^{(1-c_b)s}\|\phi\|_{H^1_{per}}^2+e^sC_{b,\Gg,\Gb} \sup_{T\geq 0}\|f(T)\|_{H^{\ell-2}_{per}}^2\bigg]ds\\
    &\leq C_{\Gg,b}'e^{t\max\{0,(1-2c_b)\}}\|\phi\|_{H^1_{per}}^4+(\Gg-\Gb)C_b''e^{t\max\{0,(1-c_b)\}}\|\phi\|_{H^1_{per}}^2+e^tC_{b,\Gg,\Gb} \sup_{T\geq 0}\|f(T)\|_{H^{\ell-2}_{per}}^2.
\end{align*}
Thus, 
$$\|u(t)\|_{H^{\ell}_{per}}^2\leq e^{-t}\|\phi\|_{H^{\ell}_{per}}^2+ C_{\Gg,b}'e^{t\max\{-1,-2c_b\}}\|\phi\|_{H^{\ell}_{per}}^4+(\Gg-\Gb)C_b''e^{t\max\{-1,-c_b\}}\|\phi\|_{H^{\ell}_{per}}^2+C_{b,\Gg,\Gb} \sup_{T\geq 0}\|f(T)\|_{H^{\ell-2}_{per}}^2.$$
We can conclude the absorbing property for $H^{\ell}_{per}$ because for given $\phi \in H^{\ell}_{per}$ and sufficiently small $\ds\sup_{t\geq 0}\|f(t)\|_{H^{\ell-2}_{per}}^2$, then there exists $t_0$ such that $\|u(t_0)\|_{H^{\ell}_{per}}$ is small, and we can apply Theorem \ref{thm3} to $u(t_0+t)$ to obtain $u-\widetilde{u}\to 0$ in $Y^{\ell}_{\tau,T}$ as $\tau\to\infty$.\\ 

For $\ell\geq 2$, note that $H^{\ell-1}_{per}$ always forms an algebra (which is not necessarily the case for $\ell\in (1,2]$), and we have $\|u^2\|_{H^{\ell-1}_{per}}\leq 
C\|u\|_{H^{\ell-1}_{per}}^2$. Thus, we have 
\begin{align*}
&\frac{d}{dt}\|u(t)\|_{H^{\ell}_{per}}^2+\|u(t)\|_{H^{\ell}_{per}}^2\leq C_{\Gg}\|u(t)\|^4_{H^{\ell-1}_{per}}+(\Gg-\Gb)\|u(t)\|_{H^{\ell-1}_{per}}^2+C'_{\Gg}\|f(t)\|_{H^{\ell-2}_{per}}^2.
\end{align*}

Therefore, we can obtain the desired estimate for $\ell\in (2,3]$ because we can apply the estimate for $\ell-1\in (1,2]$; and the result works inductively on $\ell\in (n,n+1]$. Therefore, we can conclude the absorbing property for $\ell\geq 2.$ Global stability results follow the same mechanism as that of $H^1_{per}$. \end{proof}


\section{The case of $0\le\ell<1$}
\label{Sec5}


In this section, we aim to discuss the aforementioned existence and stability results in low-regularity function spaces $H_{per}^\ell,\ \ell\in[0, 1)$. The analysis boils down to transforming the equation via the $I$ operator - $I_N$ - then applying the previous analysis to the equation about $I_Nu$. As we pointed out in this paper, there are almost conservative quantities such as the $H^1$-norm of $I_Nu$. Guided by this direction, the present authors have derived a similar calculation for the damped BBM equation posed on the whole line in \cite{LW2}. \\



At the beginning, we would define the $I$ operator. We define a smoothing bump function of frequency $k\in\mathbb{N}:$

\begin{eqnarray*}
{m_N}(k) = 
\begin{cases}
1, \,\,|k|\le N,\\\\
{|k|^{\ell-1}\over N^{\ell-1}}, \,\, |k|\ge 2N,
\end{cases}
\end{eqnarray*}
given an $N>0$. 


Linear operator $I_N$ so-called $I$-operator is that for fixed $N>0$: $H_{per}^\ell\mapsto H_{per}^1$ such that $\widehat{I_N g}(k) = m_N(k)\widehat g(k)$. There holds estimate between $\|\cdot\|_{H^{\ell}_{per}}$ and $\|I_N\cdot\|_{H^1_{per}}$:

\begin{lemma}\label{le5-2}
For $g\in H_{per}^\ell$ and a fixed $N>0$, 

$$C\|g\|_{H_{per}^\ell}\le\|I_Ng\|_{H_{per}^1}\le CN^{1-\ell}\|g\|_{H_{per}^1}.$$

Moreover for $u, v\in H_{per}^\ell$, there is constant $C_T$ relying on $T$ such that

\begin{eqnarray*}
\int_0^T\|(1-\partial^2_{x})^{-1}I_N(uv_x)(s)\|_{H_{per}^1}ds\le C_T\|I_Nu\|_{H_{per}^1}\|I_Nv\|_{H_{per}^1}.
\end{eqnarray*}
\end{lemma}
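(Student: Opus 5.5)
The plan is to work entirely on the Fourier side, using that $I_N$ is the multiplier $m_N(k)$. Between $N$ and $2N$ I take $m_N$ to be a smooth, even, nonincreasing interpolation in $|k|$. This gives two properties I will use: $0<m_N\le 1$, and the doubling bound $m_N(k)\le C\,m_N(k')$ whenever $|k'|\le 2|k|$, with $C$ independent of $N$. The mean-zero setting allows us to ignore $k=0$, or treat it trivially.

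\textbf{Norm comparison.} For the left inequality I will show pointwise that $\langle k\rangle^{\ell}\le C\,m_N(k)\langle k\rangle$, where $\langle k\rangle=(1+4\pi^2k^2)^{1/2}$, and then sum against $|\widehat g(k)|^2$.
\begin{itemize}
\item If $|k|\le N$, then $m_N=1$ and $\ell\le 1$ give $\langle k\rangle^{\ell}\le\langle k\rangle$.
\item If $|k|\ge N$, monotonicity gives $m_N(k)\ge c(|k|/N)^{\ell-1}$. Hence $m_N(k)\langle k\rangle\ge c\,|k|^{\ell}N^{1-\ell}\ge c\langle k\rangle^{\ell}$, since $N\ge 1$.
\end{itemize}
For the right inequality, the bound $m_N\le 1$ gives $\|I_Ng\|_{H^1}\le\|g\|_{H^1}$ immediately; this is the inequality exactly as stated. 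I will also record the sharper bound $\|I_Ng\|_{H^1}\le CN^{1-\ell}\|g\|_{H^\ell}$. It follows from $m_N(k)\langle k\rangle^{1-\ell}\le CN^{1-\ell}$, which one checks on $|k|\le 2N$ using $\langle k\rangle\lesssim N$, and on $|k|\ge 2N$ using the explicit formula for $m_N$.

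\textbf{Bilinear estimate.} I will prove the fixed-time bound
\[
\|(1-\partial_x^2)^{-1}I_N(uv_x)\|_{H^1_{per}}=\|I_N(uv_x)\|_{H^{-1}_{per}}\le C\|I_Nu\|_{H^1_{per}}\|I_Nv\|_{H^1_{per}},
\]
with $C$ independent of $N$. Integrating in time over $[0,T]$ then gives the claim with $C_T=CT$, where the right side is read at the level of $\sup_{[0,T]}$ (or, via Cauchy--Schwarz, of $Y^1_{0,T}$-type norms of $I_Nu$ and $I_Nv$).

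Writing $\widehat{uv_x}(k)=\sum_{k_1+k_2=k}\widehat u(k_1)\,(2\pi i k_2)\widehat v(k_2)$, the goal is to bound $\|\langle k\rangle^{-1}m_N(k)\,\widehat{uv_x}(k)\|_{\ell^2}$. I split the sum into two regions.
\begin{itemize}
\item On $|k_2|\ge|k_1|$ we have $|k|\le 2|k_2|$, so doubling gives $m_N(k)\le C m_N(k_2)$. Set $a(k_1)=|\widehat u(k_1)|$ and $b(k_2)=m_N(k_2)|k_2||\widehat v(k_2)|$.
\item On $|k_1|>|k_2|$, similarly $m_N(k)\le Cm_N(k_1)$ and $|k_2|\le\langle k_1\rangle$. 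Set $a(k_1)=m_N(k_1)\langle k_1\rangle|\widehat u(k_1)|$ and $b(k_2)=|\widehat v(k_2)|$.
\end{itemize}
In both regions the contribution is bounded by $\|\langle k\rangle^{-1}(a*b)(k)\|_{\ell^2}$. Since $\langle k\rangle^{-1}\in\ell^2(\Z)$ and Young's inequality gives $\|a*b\|_{\ell^\infty}\le\|a\|_{\ell^2}\|b\|_{\ell^2}$, this is at most $C\|a\|_{\ell^2}\|b\|_{\ell^2}$.

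It remains to bound the $\ell^2$ norms of $a$ and $b$. The weighted pieces are exactly $\|I_Nu\|_{H^1}$ or $\|I_Nv\|_{H^1}$. The unweighted pieces are $\|u\|_{L^2}$ or $\|v\|_{L^2}$, which are at most $\|u\|_{H^\ell}\le C\|I_Nu\|_{H^1}$ by the first part and $\ell\ge0$. This handles general $u,v$ and the form $uv_x$ directly, without using any symmetry.

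\textbf{Main obstacle.} The main care point is making sure the multiplier constants are uniform in $N$. That is, the doubling/monotonicity property of $m_N$ must be fixed by the choice of interpolation on $N<|k|<2N$. Once that holds, the rest is the elementary $\ell^2$ Young argument above, which works precisely because one dimension makes $\langle k\rangle^{-1}$ square-summable.
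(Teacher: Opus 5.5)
Your norm comparison is correct; the paper gives no argument for it at all. The bilinear estimate, however, has a genuine gap: the doubling property is used in the wrong direction.

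As you state it, $m_N(k)\le C\,m_N(k')$ requires $|k'|\le 2|k|$, i.e.\ $k'$ not much larger than $k$. In the region $|k_2|\ge|k_1|$ you only know $|k|\le 2|k_2|$, which is the reverse inequality, and $|k_2|$ may be arbitrarily larger than $|k|$. The same happens for $k_1$ in your second region. This is the high-high$\to$low interaction $k_1\approx -k_2$, $|k|\ll|k_1|\sim|k_2|$. There $m_N(k)=1$ while $m_N(k_j)\approx(|k_j|/N)^{\ell-1}\to 0$, and your splitting never treats it.

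Concretely, take $u=\cos(2\pi(M+1)x)$ and $v=\cos(2\pi Mx)$ with $M\gg N$. Then $uv_x=\pi M\sin(2\pi x)-\pi M\sin(2\pi(2M+1)x)$. The low mode comes entirely from pairs with $|k_1|>|k_2|$, so $\|I_N(uv_x)\|_{H^{-1}_{per}}\gtrsim M$. The bound you derive for that region is only $\|I_Nu\|_{H^1_{per}}\|v\|_{L^2_{per}}\sim M^{\ell}N^{1-\ell}$, which fails as $M\to\infty$ for every $\ell<1$.

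The same example shows this cannot be patched for the product exactly as written. The full right-hand side is $\|I_Nu\|_{H^1_{per}}\|I_Nv\|_{H^1_{per}}\sim M^{2\ell}N^{2-2\ell}$. So for $\ell<\frac12$ the estimate with the non-divergence product $uv_x$ and general $u,v$ is false, even with a constant allowed to depend on $N$. No argument can prove it on all of $[0,1)$.

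The paper's proof only invokes the estimate of \cite{MWang2} for $\partial_x(u^2)$, i.e.\ after polarization the divergence form $\partial_x(uv)$. That form is all that is needed for $zz_x+(zv)_x+vv_x$. You can repair your argument in that form as follows.
\begin{itemize}
\item The output weight becomes $m_N(k)\,2\pi|k|/\langle k\rangle\le m_N(k)$.
\item On $|k|\ge\frac14\max(|k_1|,|k_2|)$, doubling now legitimately gives $m_N(k)\lesssim m_N(k_{\max})$, and your $\ell^2$--Young argument goes through unchanged.
\item On the complementary high-high$\to$low region, both $|k_1|,|k_2|>|k|$. Set $\mu(j):=m_N(j)\langle j\rangle\gtrsim\mu_*(j):=\min(\langle j\rangle,N^{1-\ell}\langle j\rangle^{\ell})$, which is nondecreasing in $|j|$. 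Cauchy--Schwarz in $k_1$ then gives the pointwise bound $\mu_*(k)^{-2}\|I_Nu\|_{H^1_{per}}\|I_Nv\|_{H^1_{per}}$.
\item Finally, $\sum_k m_N(k)^2\mu_*(k)^{-4}\lesssim 1$ uniformly in $N\ge 1$, which closes the estimate.
\end{itemize}
For the product $uv_x$ itself, the same splitting closes only when $\ell\ge\frac12$, since only then does $|k_2|/(\mu(k_1)\mu(k_2))$ stay bounded in the high-high$\to$low region.
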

\begin{proof} It is a fact according to \cite{MWang2} that

$$\|(I-\partial^2_{x})^{-1}I_N\partial_x(u^2) \|\le C\|I_Nu\|^2_{H_{per}^1},$$
of which the derivation directly leads to the desired estimate.
\end{proof}

We apply $I_N$ on (\ref{main1}):

$$\partial_t I_Nu-\partial_t I_Nu_{xx}+I_Nu_x+\beta I_Nu -\gamma I_Nu_{xx}+I_N(uu_x) = I_Nf,$$
which can be split into two sub-problems:

\begin{equation} \label{lin7}
\begin{cases}
    \partial_t I_Nv-\partial_t I_Nv_{xx}+I_Nv_x+\beta I_Nv -\gamma I_Nv_{xx} = I_Nf, \quad\quad (x, t)\in \bT\times [0, \infty)&\\
    I_Nv(x,0)=I_N\phi(x), &
\end{cases}
\end{equation}
and 
\begin{equation}
\label{nonlin7}
\begin{cases}
\partial_t I_Nz-I_Nz_{xx}+I_Nz_x+\beta I_Nz -\gamma I_Nz_{xx}+I_N(zz_x+(zv)_x+vv_x) = 0, \quad\quad (x, t)\in \bT\times [0, \infty)&\\
I_Nz(x,0)=0.&
\end{cases}
\end{equation}

In the same fashion as the previous sections, there are estimates for \eqref{lin7} and \eqref{nonlin7}:\\

\begin{lemma}\label{le5-3}
For the linear problem (\ref{lin7}) on $\mathbb{T}$, for given $\tau\ge 0, T, N> 0, I_N\phi\in H_{per}^\ell$, and $I_N f\in L^2$, then $I_Nv\in Y^1_{\tau, T}$ and satisfies

\begin{eqnarray*}
\sup_{t\in{[\tau, \tau+T]}}\|I_Nv(t)\|_{H_{per}^1}\le \|I_N\phi\|_{H_{per}^1} + C\sup_{t\in{[\tau, \tau+T]}}\|I_Nf(t)\|, 
\end{eqnarray*} and 
\begin{eqnarray*}
\|I_Nv\|_{Y^1_{\tau, T}}\le C\|I_N\phi\|_{H_{per}^1} + C\sup_{t\in [\tau, \tau+T]}\|I_N f(t)\|. 
\end{eqnarray*}
\end{lemma}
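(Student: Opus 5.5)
Since $I_N$ is a Fourier multiplier, it commutes with $\partial_x$, $\partial_t$, $(I-\Delta)^{-1}$ and the semigroup $e^{At}$ of Proposition \ref{prop3-1}. The plan is therefore to regard \eqref{lin7} as the linear problem \eqref{sub1} for the unknown $V:=I_Nv$, with data $I_N\phi$ and forcing $F:=I_Nf$, taken at the level $\ell=1$. The first step is to write the mild solution on $[\tau,\tau+T]$,
$$I_Nv(t)=e^{A(t-\tau)}I_Nv(\tau)+\int_\tau^t e^{A(t-s)}(I-\Delta)^{-1}I_Nf(s)\,ds .$$
Proposition \ref{prop3-1} with $\ell=1$ gives $\|e^{At}\|_{L(H^1_{per})}\le e^{-\Go t}$. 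Moreover $\|(I-\Delta)^{-1}g\|_{H^1_{per}}=\|g\|_{H^{-1}_{per}}\le\|g\|$, so
$$\|I_Nv(t)\|_{H^1_{per}}\le e^{-\Go(t-\tau)}\|I_Nv(\tau)\|_{H^1_{per}}+\int_\tau^te^{-\Go(t-s)}\|I_Nf(s)\|\,ds .$$

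Next comes the sup bound. The forcing integral is at most $\Go^{-1}\sup_{s}\|I_Nf(s)\|$. To remove the dependence on the starting time, the same formula is applied from time $0$: this gives $\|I_Nv(\tau)\|_{H^1_{per}}\le e^{-\Go\tau}\|I_N\phi\|_{H^1_{per}}+\Go^{-1}\sup_{s\le\tau}\|I_Nf(s)\|$. Combining the two estimates yields
$$\sup_{[\tau,\tau+T]}\|I_Nv\|_{H^1_{per}}\le\|I_N\phi\|_{H^1_{per}}+C\sup_s\|I_Nf(s)\| .$$
Here the supremum on the right is understood over $[0,\tau+T]$, or, if $f$ is $\Gth$-periodic or lies in $L^\infty([0,\infty);L^2)$, as the global supremum. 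The statement's $\sup_{[\tau,\tau+T]}$ is then read in the same way as in Remark \ref{rmkforfinLinfty}, with $C=2/\Go$ independent of $\tau,T,N$. The hypothesis "$I_N\phi\in H^\ell_{per}$" is read as $\phi\in H^\ell_{per}$, so that $I_N\phi\in H^1_{per}$ by Lemma \ref{le5-2}.

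For the $L^2_t$ part of the $Y^1_{\tau,T}$ norm there are two routes. One is to take the $L^2([\tau,\tau+T])$ norm of the pointwise bound, using Young's convolution inequality with $\|e^{-\Go\cdot}\|_{L^1}\le\Go^{-1}$; this gives $\big(\int_\tau^{\tau+T}\|I_Nv\|_{H^1_{per}}^2\big)^{1/2}\le C_T(\|I_N\phi\|_{H^1_{per}}+\sup\|I_Nf\|)$. The other is to repeat the energy identity of Lemma \ref{estu} with $\ell=1$: multiply \eqref{lin7} by $I_Nv$, note that the transport term $\int I_Nv_x\,I_Nv$ vanishes, and use the Poincaré inequality with $\Gg>\max\{0,-\Gb/4\pi^2\}$ to absorb $\Gb\|I_Nv\|^2+\Gg\|\partial_xI_Nv\|^2\ge c\|I_Nv\|_{H^1_{per}}^2$. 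Adding the sup bound then gives the second inequality.

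The main obstacle is not analytic but bookkeeping: the constants must be independent of $N$ and $\tau$. Independence of $N$ holds because $I_N$ only multiplies each Fourier coefficient by $m_N(k)\le 1$, commutes with the linear operators above, and preserves the mean-zero condition, since $m_N(0)=1$ and $\lan f\ran=0$. The constant may depend on $T$ through the $L^2_t$ piece, and this is consistent with the statement.
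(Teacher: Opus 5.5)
Your proposal is correct and matches what the paper intends. The paper gives no separate proof and only says the estimate follows "in the same fashion as the previous sections", i.e.\ by applying Proposition \ref{prop3-1}, Duhamel's formula and the estimates of Lemma \ref{estu} and Remark \ref{rmkforfinLinfty} at $\ell=1$ to $I_Nv$, which solves \eqref{sub1} with data $I_N\phi$ and forcing $I_Nf$. Your reading of $\sup\|I_Nf\|$ as a supremum over $[0,\tau+T]$ (or over all $t\ge0$) is in fact necessary: for $\tau>0$ the literal statement fails, since with $\phi=0$ and a fixed mean-zero $g\neq0$ as forcing on $[0,\tau)$, switched off afterwards, one has $I_Nv(\tau)\neq0$ while the right-hand side vanishes.
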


\begin{lemma}\label{le5-4}
Problem (\ref{nonlin7}) admits a solution $I_Nz\in Y^1_{0, T}$ satisfying

\begin{eqnarray}
\|I_Nz\|_{Y^1_{0, T}}\le C\|I_Nv\|_{Y^1_{0, T}}.
\end{eqnarray} 

If $\tau>0$, on $[\tau, \tau+T]$,
\begin{eqnarray}\label{3-7}
\|I_Nz\|_{Y^1_{\tau, T}}\le 2\|I_Nz\|_{H_{per}^1} + 2\|I_Nv\|_{Y^1_{\tau, T}}.
\end{eqnarray}
\end{lemma}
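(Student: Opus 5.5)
The plan is to treat \eqref{nonlin7} exactly as \eqref{eqsub2} was treated in Propositions \ref{estw} and \ref{wYlT}, with the role of $w\in Y^\ell_{0,T}$ now played by $Z:=I_Nz\in Y^1_{0,T}$. The estimates are transferred to the smoothed level through Lemma \ref{le5-2}. Since $I_N$ is a Fourier multiplier, it commutes with $A$ and with $(I-\Delta)^{-1}$. Hence \eqref{nonlin7} (read with the $-\partial_tI_Nz_{xx}$ term, as in \eqref{lin7}) has the mild form
\begin{equation*}
I_Nz(t)=-\int_0^t e^{A(t-s)}(I-\Delta)^{-1}I_N\big[zz_x+(zv)_x+vv_x\big](s)\,ds .
\end{equation*}
We then define $\Gamma(Z)$ by the right-hand side, where $z$ is recovered as $I_N^{-1}Z$. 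This is legitimate because $m_N(k)>0$, and Lemma \ref{le5-2} gives $\|z\|_{H^\ell_{per}}\le C\|Z\|_{H^1_{per}}$. The map $\Gamma$ acts on the ball $\{Z:\|Z\|_{Y^1_{0,T}}\le M'\}$ with $M'=K\|I_Nv\|_{Y^1_{0,T}}$.

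\emph{Key steps.} First, use Proposition \ref{prop3-1} with $\ell=1$: $\|e^{At}\|_{L(H^1_{per})}\le e^{-\omega t}$. This gives
\[
\sup_{t\le T}\|\Gamma(Z)(t)\|_{H^1_{per}}\le\int_0^T\|(I-\Delta)^{-1}I_N[\cdots](s)\|_{H^1_{per}}\,ds .
\]
Apply the bilinear bound of Lemma \ref{le5-2} to each of the three terms, writing $zv_x+vz_x=(zv)_x$ and $zz_x=\frac12(z^2)_x$. This yields
\[
\|\Gamma(Z)\|_{Y^1_{0,T}}\le C_T\big(\|Z\|_{Y^1_{0,T}}^2+\|Z\|_{Y^1_{0,T}}\|I_Nv\|_{Y^1_{0,T}}+\|I_Nv\|_{Y^1_{0,T}}^2\big).
\]
The $L^2_t$ part is handled by Young's convolution inequality, as in Proposition \ref{wYlT}. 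Second, obtain the difference estimate for contraction: $\Gamma(Z)-\Gamma(Z')$ involves $\big((z+z')(z-z')\big)_x/2$ and $(v(z-z'))_x$. Both are bilinear, so the same lemma gives a Lipschitz constant $C_T(2M'+M'/K)$. Third, choose $K$ and require $\|I_Nv\|_{Y^1_{0,T}}$ small; Lemma \ref{le5-3} guarantees this when $\|I_N\phi\|_{H^1_{per}}$ and $\sup\|I_Nf\|$ are small. The Banach fixed point theorem then gives $Z=I_Nz$ with $\|I_Nz\|_{Y^1_{0,T}}\le C\|I_Nv\|_{Y^1_{0,T}}$. Fourth, for $\tau>0$, restart at time $\tau$ with datum $I_Nz(\tau)$, as in Proposition \ref{wYlT}. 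The linear part contributes $\sup_t\|e^{A(t-\tau)}I_Nz(\tau)\|_{H^1_{per}}\le\|I_Nz(\tau)\|_{H^1_{per}}$, plus an $L^2$ contribution of the same size. The nonlinear part is at most $C_TM'^2\le\frac12 M'$ on a ball of radius $M'=2(\|I_Nz(\tau)\|_{H^1_{per}}+\|I_Nv\|_{Y^1_{\tau,T}})$. Absorbing it produces \eqref{3-7}, where the first term on the right is understood as $\|I_Nz(\tau)\|_{H^1_{per}}$.

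\emph{Main obstacle.} The crux is the bilinear estimate in Lemma \ref{le5-2} applied to mixed products $I_N(zv)_x$ and $I_N(zz_x)$, with constants uniform in $N$. Lemma \ref{le5-2} is quoted from \cite{MWang2} only for $\partial_x(u^2)$. I would first try to deduce the mixed case by polarization, $2uv=(u+v)^2-u^2-v^2$, which is legitimate because $I_N$ is linear. If uniformity in $N$ fails, it costs only a constant $C_{T,N}$ at this fixed-$N$ stage; that is acceptable for existence, though not for the later almost-conservation argument. A secondary point is recovering $z$ from $I_Nz$ in $H^\ell_{per}$. This is needed only to make sense of the products, and it follows from the lower bound in Lemma \ref{le5-2}.
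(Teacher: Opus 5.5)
Your proposal is correct and follows essentially the paper's route: the paper gives no separate proof of Lemma \ref{le5-4}, only the remark that it holds ``in the same fashion as the previous sections'', i.e.\ by rerunning the fixed-point arguments of Propositions \ref{estw} and \ref{wYlT} for $I_Nz$ with Lemma \ref{le5-2} in place of Lemma \ref{bilinear}, and your additions (reading \eqref{nonlin7} with $-\partial_tI_Nz_{xx}$, polarization for the mixed term $\partial_x(zv)$, and an explicit smallness assumption on $\|I_Nv\|_{Y^1_{0,T}}$) only make explicit what the paper leaves implicit. One caution on \eqref{3-7}: the free evolution contributes $\|I_Nz(\tau)\|_{H^1_{per}}$ to the sup part but up to $\min\{\sqrt{T},(2\omega)^{-1/2}\}\|I_Nz(\tau)\|_{H^1_{per}}$ to the $L^2_t$ part, and even with your own count the ball of radius $M'=2(\|I_Nz(\tau)\|_{H^1_{per}}+\|I_Nv\|_{Y^1_{\tau,T}})$ is not mapped into itself (you get $3\|I_Nz(\tau)\|_{H^1_{per}}+\|I_Nv\|_{Y^1_{\tau,T}}$), so the argument really yields \eqref{3-7} with a constant depending on $\omega$ and $T$, like the unspecified $K>1$ of Proposition \ref{wYlT}, rather than literally $2$; this is enough for the later iteration in the style of Proposition \ref{Prop4}.
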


\begin{lemma}\label{le5-5}
If $\tau>0$, $\|I_Nv\|_{Y^1_{0,T}}\le \delta'$, then there holds

\begin{eqnarray}
\|I_Nz\|_{Y^1_{\tau, T}}\le C\|I_N v\|_{Y^1_{0,T}}. 
\end{eqnarray}
\end{lemma}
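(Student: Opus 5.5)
The idea is to repeat the discrete-time iteration used in the proof of Proposition \ref{Prop4}, now for the $I_N$-transformed problem (\ref{nonlin7}) in the $H^1_{per}$ framework. Lemma \ref{le5-2} replaces the bilinear estimate of Lemma \ref{bilinear}, and Lemmas \ref{le5-3}--\ref{le5-4} replace Lemma \ref{estu} and Proposition \ref{wYlT}.

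First, write the mild form of (\ref{nonlin7}) on $[\tau,\tau+t]$:
$$I_Nz(t+\tau)=e^{At}I_Nz(\tau)-\int_\tau^{t+\tau}e^{A(t+\tau-s)}(I-\Delta)^{-1}I_N\big(zz_x+(zv)_x+vv_x\big)(s)\,ds.$$
This form is valid because $I_N$ is a Fourier multiplier and therefore commutes with $A$ and with $e^{At}$. The decay $\|e^{At}\|_{L(H^1_{per})}\le e^{-\omega t}$ comes from Proposition \ref{prop3-1} with $\ell=1$.

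Next, set $z_k:=I_Nz(kT)$ and apply the second part of Lemma \ref{le5-2} to each of the three bilinear terms $I_N(zz_x)$, $I_N((zv)_x)$ and $I_N(vv_x)$. This gives
$$\|z_k\|_{H^1_{per}}\le e^{-\omega T}\|z_{k-1}\|_{H^1_{per}}+C_T\big(\|I_Nz\|^2_{Y^1_{(k-1)T,T}}+\|I_Nz\|_{Y^1_{(k-1)T,T}}\|I_Nv\|_{Y^1_{(k-1)T,T}}+\|I_Nv\|^2_{Y^1_{(k-1)T,T}}\big).$$
Then use (\ref{3-7}) to bound $\|I_Nz\|_{Y^1_{(k-1)T,T}}$ by $2\|z_{k-1}\|_{H^1_{per}}+2\|I_Nv\|_{Y^1_{(k-1)T,T}}$. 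Lemma \ref{le5-3} together with the exponential decay of $e^{At}$ gives $\sup_{t\ge0}\|I_Nv\|_{Y^1_{t,T}}\le C\|I_Nv\|_{Y^1_{0,T}}$ plus a forcing contribution. This reduces the estimate to
$$\|z_k\|\le \big(e^{-\omega T}+C\|z_{k-1}\|+C\delta'\big)\|z_{k-1}\|+C\delta'\|I_Nv\|_{Y^1_{0,T}}.$$

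From here, run a bootstrap. Assume $\|z_j\|_{H^1_{per}}\le\eta$ for all $j<k$, and choose $\eta$ and $\delta'$ small enough that $\xi:=e^{-\omega T}+C\eta+C\delta'<1$. Iterating as in \cite{WZ}*{Lemma 3.4}, and using $z_0=0$, gives
$$\|z_k\|_{H^1_{per}}\le \frac{C}{1-\xi}\|I_Nv\|_{Y^1_{0,T}}\le\eta,$$
which closes the induction. For $\tau\in((k-1)T,kT)$, repeat the computation on a partial interval of length $\alpha T$, exactly as in Proposition \ref{Prop4}. Finally, apply (\ref{3-7}) once more to pass from $\|I_Nz(\tau)\|_{H^1_{per}}$ to $\|I_Nz\|_{Y^1_{\tau,T}}$.

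The main obstacle is controlling $\sup_{t}\|I_Nv\|_{Y^1_{t,T}}$ uniformly in $t$ by $\|I_Nv\|_{Y^1_{0,T}}$. The forcing enters Lemma \ref{le5-3} only through $\sup\|I_Nf\|$, so the bound on later windows has to come from the decay of $e^{At}$ on the homogeneous part together with a bound on $f$ that is uniform in time. I would first try to absorb the forcing into $\delta'$, interpreting the smallness hypothesis as covering both $\phi$ and $f$. A secondary issue is keeping the constant $C_T$ from Lemma \ref{le5-2} independent of $N$, so that smallness is not lost as $N$ grows.
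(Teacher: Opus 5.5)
Your overall route is the one the paper intends. Lemma \ref{le5-5} is stated without proof as the $I_N$-analogue of Proposition \ref{Prop4}. Your mild formulation (using that $I_N$ commutes with $e^{At}$), your use of Lemma \ref{le5-2} and (\ref{3-7}), and the bootstrap with $\xi<1$ reproduce that proof step by step. Since $N$ is fixed in this lemma, any $N$-dependence of $C_T$ only affects the size of $\delta'$ here; it matters later, when $N$ is chosen.

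The genuine gap is the step you flag yourself. You pass from $\sup_{t\ge0}\|I_Nv\|_{Y^1_{t,T}}\le C\|I_Nv\|_{Y^1_{0,T}}+C\sup_t\|I_Nf(t)\|$ to a recursion in which the forcing term has disappeared. Absorbing $f$ into $\delta'$ does not repair this. It makes $\sup_t\|I_Nv\|_{Y^1_{t,T}}$ small, but not proportional to $\|I_Nv\|_{Y^1_{0,T}}$. Both your recursion and your final use of (\ref{3-7}), which produces the term $2\|I_Nv\|_{Y^1_{\tau,T}}$, need exactly that proportionality.

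No argument can supply it, because the inequality is false as literally stated. Take $\phi=0$ and $f$ vanishing on $[0,T]$ but not afterwards. Then $v\equiv z\equiv0$ on $[0,T]$, so the right-hand side is $0$. For $t>T$, however, the forcing $-vv_x=-\frac12(v^2)_x$ in (\ref{nonlin7}) is not identically zero, since a continuous mean-zero $v\not\equiv0$ cannot have constant square. It therefore drives $z\not\equiv0$ on later windows.

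The statement has to be read as in Proposition \ref{Prop4}: assume $\sup_{t\ge0}\|I_Nv\|_{Y^1_{t,T}}\le\delta'$ and conclude $\|I_Nz\|_{Y^1_{\tau,T}}\le C\sup_{t\ge0}\|I_Nv\|_{Y^1_{t,T}}$. With that reading your recursion becomes $\|z_k\|_{H^1_{per}}\le\xi\|z_{k-1}\|_{H^1_{per}}+C\sup_{t\ge0}\|I_Nv\|^2_{Y^1_{t,T}}$, and it closes with no separate assumption on $f$. The forcing enters only when verifying the hypothesis, through Lemma \ref{le5-3}. That is all the proof of Theorem \ref{thm1} for $0\le\ell<1$ actually uses.

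If you want the literal right-hand side, you need extra structure, e.g.\ $f$ $\theta$-periodic and $T\ge\theta$. Then $I_Nv(t+\theta)-I_Nv(t)=e^{At}\big(I_Nv(\theta)-I_N\phi\big)$, and summing the resulting geometric series gives $\sup_{t\ge0}\|I_Nv(t)\|_{H^1_{per}}\le C\sup_{0\le s\le T}\|I_Nv(s)\|_{H^1_{per}}$. Theorem \ref{thm1}, however, assumes no periodicity.
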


 The stability property of the desired solution needs a key estimate: the decay estimate of the error equation. The solution to the error of the BBM equation is denoted by $w$. More precisely, on $\mathbb{T}$, we consider

\begin{eqnarray}\label{5-6}
\begin{cases}
w_t - w_{xxt} +\beta w- \gamma w_{xx} + (aw)_x = 0,\\
w(x, 0) = \psi(x),
\end{cases}
\end{eqnarray}
which has the form after applying $I_N$

\begin{eqnarray}\label{7-6}
\begin{cases}
\partial_tI_Nw - \partial_tI_Nw_{xx} +\beta I_Nw - \gamma I_N w_{xx} + I_N(aw)_x = 0,\\
I_Nw(x, 0) = I_N \psi(x),
\end{cases}
\end{eqnarray}

In the following content, we will specify the selection of $N$.  The function $a$ also needs some smallness as $v$. 

\begin{lemma}\label{le5-6}
If $\|I_Na\|_{Y^1_{\tau, T}}$ is bounded for $\tau\in \mathbb{R}_+$, there exists an integer $N_0$ so large that a constant $\gamma_0$ relying on $\beta, \gamma$, and  $N_0^{-{3\over 2}}\|I_Na\|_{H_{per}^1}$, then $\|I_Nw\|_{Y^1_{\tau, T}}\lesssim e^{-2\gamma_0\tau}$.
\end{lemma}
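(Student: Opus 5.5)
I will run an $H^1$ energy estimate on the $I$-transformed error equation \eqref{7-6}, treating the commutator between $I_N$ and multiplication by $a$ as a perturbation that is small for large $N$. Then I will iterate over time blocks of length $T$, as in the proof of Theorem \ref{thm2}.

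\textbf{Energy identity.} Pair \eqref{7-6} with $I_Nw$ in $L^2_{per}$. The transport term $I_N w_x$ (if retained) integrates to zero by periodicity. This gives
\begin{align*}
\frac12\frac{d}{dt}\|I_Nw\|_{H^1_{per}}^2+\Gb\|I_Nw\|^2+\Gg\|\partial_xI_Nw\|^2=-\lan I_N(aw)_x, I_Nw\ran .
\end{align*}
By the Poincar\'e inequality on mean-zero functions, exactly as in Proposition \ref{prop3-1} and the $\ell=1$ step of Theorem \ref{thm4}, the left-hand dissipation dominates $2\Go_1\|I_Nw\|_{H^1_{per}}^2$, with $\Go_1=\frac{4\pi^2\Gg+\min\{\Gb,0\}\cdot(\ldots)}{1+4\pi^2}>0$ depending only on $\Gb,\Gg$.

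\textbf{Splitting the nonlinear term.} Write
\[
I_N(aw)_x=(I_Na\,I_Nw)_x+\big[I_N(aw)_x-(I_Na\,I_Nw)_x\big].
\]
\begin{itemize}
\item \emph{Main part.} After integrating by parts, $\lan (I_Na\,I_Nw)_x,I_Nw\ran=\frac12\lan \partial_xI_Na,(I_Nw)^2\ran$. Since $H^1_{per}\subset L^\infty$, this is bounded by $C\|I_Na\|_{H^1_{per}}\|I_Nw\|_{H^1_{per}}^2$.
\item \emph{Commutator part.} The multiplier $m_N(k_1+k_2)-m_N(k_1)m_N(k_2)$ vanishes unless some frequency exceeds $N$. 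I will estimate it in Fourier variables as in \cite{MWang2}, using the bilinear bound of Lemma \ref{le5-2}. Because of the decay of $m_N$, I expect a gain of $N^{-3/2}$ (possibly up to $N^{0+}$), giving
\[
|\text{commutator term}|\le CN^{-3/2}\|I_Na\|_{H^1_{per}}\|I_Nw\|_{H^1_{per}}^2 .
\]
\end{itemize}

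\textbf{Where smallness comes from.} The main part is only small if $\|I_Na\|_{Y^1_{\tau,T}}$ is small, and the statement assumes mere boundedness. My first attempt is to read the hypothesis as in Lemma \ref{aprieqap1}: in the application, $a=\frac12(u(\cdot+\Gth)+u)$, and $I_Na$ is small by Lemmas \ref{le5-3}--\ref{le5-5} once the data are small. The main part is then absorbed into the dissipation.

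If only boundedness is available, I would instead keep the main part as part of the block-iteration constant. The decay rate $\Gg_0$ would then depend on $\Gb,\Gg$ and on $N_0^{-3/2}\|I_Na\|_{H^1_{per}}$, which is how the statement is phrased. Either way I choose $N_0$ large so that $CN_0^{-3/2}\sup_\tau\|I_Na\|_{Y^1_{\tau,T}}\le \Go_1/2$.

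\textbf{Closing the argument.} Gronwall gives
\[
\frac{d}{dt}\|I_Nw\|_{H^1_{per}}^2+2\Gg_0'\|I_Nw\|_{H^1_{per}}^2\le 0 ,
\]
hence $\|I_Nw(t)\|_{H^1_{per}}\le e^{-\Gg_0' t}\|I_N\psi\|_{H^1_{per}}$. Taking $L^\infty$ and $L^2$ norms over $[\tau,\tau+T]$ yields the $Y^1_{\tau,T}$ bound with exponential factor $e^{-2\Gg_0\tau}$, after renaming $\Gg_0=\Gg_0'/2$.

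If a pointwise-in-time bound on $I_Na$ is unavailable, I will use the block iteration of Theorem \ref{thm2}: set $w_k=I_Nw(kT)$ and obtain $\|w_k\|\le\mu\|w_{k-1}\|$ with $\mu<1$.

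\textbf{Main obstacle.} The hardest step is the commutator estimate with the explicit $N^{-3/2}$ gain uniformly on $\bT$. I will split into frequency regions, $|k_1|,|k_2|\lesssim N$ (where the commutator vanishes) versus at least one high frequency. In the high-frequency region I will bound $|m_N(k)-m_N(k_1)m_N(k_2)|\,|k|/(\lan k_1\ran m_N(k_1)\lan k_2\ran m_N(k_2))$ and sum using Cauchy--Schwarz over $k_1+k_2=k$.
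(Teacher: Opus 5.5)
Your splitting of $I_N(aw)_x$ into $(I_Na\,I_Nw)_x$ plus a commutator differs from the paper's route, and it is the correct one. The paper does not split. It bounds the whole pairing by \eqref{5-7}, namely $|(I_N(aw)_x,I_Nw)|\le CN^{-3/2}\|I_Na\|_{H^1_{per}}\|I_Nw\|^2_{H^1_{per}}$, and then takes $N_0$ large; that is why mere boundedness of $I_Na$ appears in the statement. That estimate is false. Take $N\ge 3$, $w=\cos 2\pi x$ and $a=\sin 4\pi x$. Then $m_N=1$ on every frequency of $a$, $w$ and $aw$, so the pairing equals $\frac12\int_{\bT}a_xw^2\,dx=\frac{\pi}{2}$ for every such $N$, while the right-hand side tends to $0$. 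The $N^{-3/2}$ gain of \cite{MWang2} concerns the symmetric pairing $(I_N\partial_x(u^2),I_Nu)$, where your main part vanishes identically. When $a\neq w$, only the commutator gains.

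Your first branch is therefore sound. You assume $\sup_t\|I_Na(t)\|_{H^1_{per}}\le c(\Gb,\Gg)$, bound the main part by $C\|I_Na\|_{H^1_{per}}\|I_Nw\|^2_{H^1_{per}}$ via $H^1_{per}\subset L^\infty$, absorb both pieces into the dissipation, and integrate. This proves the version of the lemma that is actually used: the proof of Theorem \ref{thm2} for $\ell<1$ itself invokes smallness of $\|I_Na\|_{H^1}$. Mind the order of quantifiers. $\|I_Na\|_{H^1_{per}}$ can grow with $N$, up to $CN^{1-\ell}\|a\|_{H^\ell_{per}}$, so fix $N$ first and then shrink the data. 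In this branch no large $N_0$ is needed at all.

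Your fallback branch, however, is a genuine gap and should be discarded. Block iteration cannot create decay. Its contraction factor is $\mu=e^{-\Go T}+C\sup_t\|I_Na\|_{Y^1_{t,T}}$, exactly as in the proof of Theorem \ref{thm2}. So $\mu<1$ requires the same smallness you are trying to avoid, and no choice of $N_0$ can absorb an $O(\|I_Na\|_{H^1_{per}})$ term that has no $N$-gain.

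In fact the statement fails under mere boundedness when $\Gb<0$. Take time-independent $a=-V'$. The ansatz $w=e^{\lambda t}\phi$ reduces \eqref{5-6} to $(\Gg+\lambda)\phi_{xx}+(V'\phi)_x=(\lambda+\Gb)\phi$. For fixed $\tilde\gamma$, the spectral gap on mean-zero functions of the Fokker--Planck operator $\tilde\gamma\partial_x^2+\partial_x(V'\,\cdot\,)$ tends to $0$ as the barrier height of a periodic double well $V$ grows. An intermediate-value argument in $\lambda\in(0,|\Gb|)$ then yields a mean-zero solution with $\|I_Nw(t)\|_{H^1_{per}}$ growing like $e^{\lambda t}$ for every $N$.
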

\begin{proof}
Multiplying $I_N w$ onto the equation and integrating on $\mathbb{T}$, we obtain 

\begin{eqnarray*}
{1\over 2}{d\over dt}\|I_Nw(t)\|_{H_{per}^1}^2 + \beta\|I_Nw(t)\|^2+\gamma\|I_Nw_{x}(t)\|^2=-(I_N(aw)_x, I_Nw).
\end{eqnarray*}

Noting that 
$$\|I_Nw_x\|^2 = \sum_{|k|\ge 1}m_N^2(k)|2\pi ik|^2{\hat w}^2(k),$$
the Poincaré-type inequality
$$\|I_Nw\|^2\le {1\over 4\pi^2}\|I_Nw_x\|^2,$$
and another straightforward calculation gives the observation

$$\|(I_Nw)_x\|^2=\|\sum_{|k|\ge1}m_N(k)(2\pi ik) \hat w(k)e^{2\pi ikx}\|^2=\|I_Nw_x\|^2,$$
and hence, $$\|I_Nw_x\|^2\le\|I_Nw\|_{H^1_{per}}^2\le {1+4\pi^2\over 4\pi^2}\|I_Nw_x\|^2.$$
If $\gamma >\max\{0, -{\beta\over 4\pi^2}\}$, then there exists
\begin{eqnarray*}
\Gg' = 
\begin{cases}
{4\pi^2\gamma\over 1+4\pi^2}, \,\,\beta\ge 0,\\\\
\gamma - {|\beta|\over 4\pi^2}, \,\, \beta<0,
\end{cases}
\end{eqnarray*}
such that $\Gg'>0$ and

\begin{eqnarray*}
{1\over 2}{d\over dt}\|I_Nw(t)\|_{H_{per}^1}^2 + \gamma'\|I_Nw(t)\|_{H^1_{per}}^2\le|(I_N(aw)_x, I_Nw)|.
\end{eqnarray*}

There holds a fact on torus (see e.g. \cite{MWang2}, and \cite{LW2} while the one in \cite{LW2} is for whole line case) that 

\begin{eqnarray}\label{5-7}
|(I_N(aw)_x, I_Nw)|\le CN^{-{3\over 2}}\|I_Na\|_{H_{per}^1}\|I_Nw\|^2_{H_{per}^1}.
\end{eqnarray} If we select $N=N_0$ so large that $\Gg'-CN_0^{-{3\over 2}}\|I_Na\|_{H_{per}^1}>0$, we are able to find a $\gamma_0$ such that

$${1\over 2}{d\over dt}\|I_Nw(t)\|^2_{H_{per}^1}+\gamma_0\|I_Nw(t)\|_{H_{per}^1}^2\le 0,$$
whence the exponential decay of $\|I_Nw\|_{H_{per}^1}$ follows in the sprit of ODE:

\begin{equation}\label{5-9}
\|I_Nw(t)\|_{H_{per}^1}^2\le\|I_N\psi\|_{H_{per}^1}^2e^{-2\gamma_0t}.
\end{equation}

To get decay estimate of $\int_{\tau}^{\tau+T}\|I_Nw(s)\|^2ds$ follows from integration result of above inequality: 
$$2\gamma_0\int_{\tau}^{\tau+T}\|I_Nw(s)\|_{H_{per}^1}^2ds\le \|I_Nw(\tau)\|_{H_{per}^1}^2+\|I_Nw(\tau+T)\|_{H_{per}^1}^2.$$

Based on the above decay estimates, the estimate of $\|w\|_{H^\ell_{\tau, T}}$ can be obtained via Lemma \ref{le5-2}. 
\end{proof}

We now proceed to prove the main results for $0 \le \ell < 1$. \\

\begin{proof} [Proof of Theorem \ref{thm1} for $0\le\ell<1$] \ \\
For fixed $\tau, T>0$, owing to Lemmas \ref{le5-3} - \ref{le5-5}, we obtain the sum $I_Nv, I_Nz\in Y^{1}_{\tau, T}$ then $u = v+z \in Y^\ell_{\tau, T}$ via Lemma \ref{le5-2}. 
\end{proof}

\begin{proof}[Proof of Theorem \ref{thm2} for $0\le\ell<1$]\ \\
Noting that $f$ is $\theta$-time-periodic, if $w = u(x, t+\theta)-u(x,t)$, then $w$ exactly follows (\ref{5-6}) with $\psi(x) = u(x, \theta)-\phi(x), a={1\over 2}(u(x,t)+u(x, t+\theta))$ and $I_Nw$ follows (\ref{7-6}). If $\delta$ is sufficiently small, $\|I_Na\|_{H^1}$ is such small quantity that $\gamma_0>0$ of (\ref{7-6}) in Lemma \ref{le5-6}. Thus the asymptotic periodicity of $u$ is obtained as Lemma \ref{le5-6} and Lemma \ref{le5-2}:

$$\|u(\cdot, \cdot+\theta)-u(\cdot, \cdot)\|_{Y^{\ell}_{\tau, T}}\lesssim e^{-\rho\tau}$$

with $\rho=2\gamma_0.$
\end{proof}

\begin{proof}[Proof of Theorem \ref{thm3} for $0\le \ell<1$]\ \\
The sketch is that, noting $u_k=u(x, k\theta), k\in \mathbb{N}$, we could show that the sequence $\{u_k\}_k$ is Cauchy in $H^{\ell}$, whose limit, $\displaystyle\lim_{k\rightarrow\infty}u_k = \tilde\phi$, could be picked as a new initial value of the original PDE. Its solution trajectory $\tilde u$ is precisely time-periodic and in $H^{\ell}$. It is apparent due to Theorem \ref{thm1} that since the size in norm of $\tilde\phi$ is small, the size in norm of $\tilde u$ is small as well. \\

Secondly, we deal with the uniqueness of $\tilde u$. If the uniqueness doesn't hold otherwise, there are at least two distinct periodic solutions, $\tilde u_1$ and $\tilde u_2$. Define $w=\tilde u_1(x,t) - \tilde u_2(x,t)$, which satisfies (\ref{5-9}) again. Since $\psi\equiv0$ in this case, we have $w\equiv 0$, which proves the uniqueness of $\tilde u.$\\

Thirdly, we address the local stability of the solution. Let $w = u(x,t)-\tilde u(x,t),\ a={u(t,x)+\tilde u(t, x)\over 2}$. Then, $w$ satisfies (\ref{5-6}). From Lemma \ref{le5-3}-\ref{le5-5}, $I_Na$ is small in size as $I_Nu$ is small on $[\tau, \tau+T]$. Lemma \ref{le5-6} giving the exponential decay leads to the local stability as per Definition \ref{de2-2}.
\end{proof}

\begin{proof}[Proof of Theorem \ref{thm4} for $0\le\ell<1$]\ \\
Similar to the analysis in \cite{LW2}, it suffices to prove the global absorbing property of $I_Nu$. Note that

\begin{eqnarray*}
\partial_t I_Nu - \partial_tI_Nu_{xx} + \beta I_Nu  - \gamma I_Nu_{xx} + I_N(uu_x)= I_Nf,
\end{eqnarray*}
multiplying $I_Nu$ onto it, then similar calculation as that of $I_Nw$ in Lemma \ref{le5-6} gives

\begin{eqnarray*}
{1\over 2}{d\over dt}\|I_Nu(t)\|_{H^1_{per}}^2 + \Gg'\|I_Nu(t)\|_{H^1_{per}}^2 = (I_N(f-uu_x), I_Nu),
\end{eqnarray*} with same $\gamma'$ specified in Lemma \ref{le5-6}.\\



There hold

\begin{eqnarray*}
|(I_Nf, I_Nu)|&\le& {1\over 2}\|f\|^2+{1\over 2}\|I_Nu\|^2
\end{eqnarray*} by Young's inequality, and 

\begin{eqnarray*}
|(I_Nuu_x, I_Nu)|&\le& CN^{-{3\over 2}+}\|I_Nu\|^3_{H^1_{per}},
\end{eqnarray*}
by a similar calculation of (\ref{5-7}).\\

Pick $N_1\in\mathbb{N}$ so large that $\gamma_1 = \Gg' -{1\over 8\pi^2} - CN_1^{-{3\over 2}+}\|I_Nu\|_{H^1_{per}}>0$, then it can be inferred that

\begin{eqnarray*}
{d\over dt}\|I_Nu(t)\|_{H^1_{per}}^2+2\gamma_1\|I_Nu(t)\|_{H^1_{per}}^2 \le \|f(t)\|^2.
\end{eqnarray*}

Applying an integration factor in the fashion of ODE, we have

\begin{eqnarray*}
\|I_Nu(t)\|_{H^1_{per}}^2&\le& e^{-{2\gamma_1 t}}\|I_N\phi\|^2_{H^1_{per}}+\int_0^te^{-2\gamma_1{(t-s)}}\|f(s)\|^2ds\\
&\le& e^{-{2\gamma_1 t}}\|I_N\phi\|^2_{H^1_{per}}+{C(\gamma_1)}\sup_{t\in[0, \infty]}\|f(t)\|^2. 
\end{eqnarray*}

Like the mechanism shown in \cite{LW2}, the local stability combined with this absorbing behavior gives global stability of $I_Nu$ defined in Definition \ref{de2-3}. Global stability of $u$ in $H^\ell$ agrees with this behavior by Lemma \ref{le5-2}.
\end{proof}

\begin{remark}[Generalized BBM]\label{multi}\ \\
    We end this paper by giving a remark addressing a generalized BBM equation of the form
\begin{equation} \label{mainp}
\begin{cases}
    u_t-u_{xxt}+u_x+u^{p-1}u_x+\Gb u-\Gg u_{xx}=f(x,t), \quad\quad (x, t)\in \bT\times [0, \infty),&\\
    u(x,0)=\phi(x).&
\end{cases}
\end{equation}

    In a recent paper \cite{KimK}, Kim and Kwak have proved that the generalized BBM equation (with $\Gb=0=\Gg$ and $f\equiv0$) is locally well-posed on $C([0,T];H^\ell_{per})$ for $\ell\geq \frac{p-2}{2p}$ and globally well-posed for $(p,\ell)\in \{3\}\times [\frac{1}{4},\infty)$ and $(p,\ell)\in \{5\}\times (\frac{1}{2},\infty)$. Moreover, they showed that the result of local well-posedness is sharp by showing that the multilinear estimate fails to hold on $H^\ell_{per}$ for $\ell<\frac{p-2}{2p}$. More precisely, they proved the following multilinear estimate with sharp exponent $\ell$:
    
    \begin{lemma}[\cite{KimK}] \label{kimk}
        Suppose $p$ is an integer with $p\geq 2$. Then for any $\ell\geq \frac{p-2}{2p}$, there exists $C>0$ such that 
        $$\|(I-\Delta)^{-1}\partial_x(\prod_{j=1}^p f_j)\|_{H^\ell_{per}}\leq C\prod_{j=1}^p \|f_j\|_{H^\ell_{per}}$$
        for all $\{f_j\}_{j=1}^{p}\subset H^\ell_{per}$.
    \end{lemma}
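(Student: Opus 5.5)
Working on the Fourier side should settle Lemma \ref{kimk}. Write $\widehat{f_j}(k_j)$ for the coefficients, $\langle k\rangle:=(1+4\pi^2k^2)^{1/2}$, and note that the operator $(I-\Delta)^{-1}\partial_x$ has symbol $\frac{2\pi i k}{1+4\pi^2k^2}$, which is bounded by $C\langle k\rangle^{-1}$. The estimate is therefore equivalent to
\begin{equation*}
\sum_{k\in\Z}\langle k\rangle^{2\ell-2}\Big|\sum_{k_1+\cdots+k_p=k}\prod_{j=1}^p\widehat{f_j}(k_j)\Big|^2\leq C\prod_{j=1}^p\|f_j\|_{H^\ell_{per}}^2 .
\end{equation*}
Set $g_j(k_j):=\langle k_j\rangle^{\ell}|\widehat{f_j}(k_j)|\in\ell^2(\Z)$. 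By duality against $h\in\ell^2$, it suffices to bound
\begin{equation*}
\sum_{k_1+\cdots+k_p=k}\langle k\rangle^{\ell-1}\prod_j\langle k_j\rangle^{-\ell}\,h(k)\prod_j g_j(k_j)\lesssim\|h\|_{\ell^2}\prod_j\|g_j\|_{\ell^2}.
\end{equation*}

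The sum splits according to which frequency is largest. By symmetry, assume $|k_1|\geq|k_2|\geq\cdots\geq|k_p|$. Then $|k|\lesssim|k_1|$, so $\langle k\rangle^{\ell-1}\langle k_1\rangle^{-\ell}\lesssim\langle k\rangle^{-1}$ when $\ell\geq0$; the case $\ell\geq1$ is trivial by the algebra property, so take $0\leq\ell<1$.

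On the main piece, apply Cauchy--Schwarz in the variables $(k_2,\dots,k_p)$ with $k_1=k-k_2-\cdots-k_p$. This reduces matters to a uniform-in-$k$ bound on the weighted counting sum
\begin{equation*}
S(k):=\langle k\rangle^{2\ell-2}\sum_{k_2,\dots,k_p}\langle k_1\rangle^{-2\ell}\prod_{j\geq2}\langle k_j\rangle^{-2\ell},\qquad |k_j|\leq|k_1|.
\end{equation*}
Equivalently one can use Hölder together with the Sobolev embedding $H^{\frac12-\frac1q}(\bT)\subset L^q$, placing each $f_j$ with $j\geq2$ in $L^{q}$ and $f_1$ in $L^2$. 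The cleaner route is the embedding approach, with exponents chosen so that $\sum_j\frac1{q_j}=\frac12$ after the $\langle k\rangle^{-1}$ gain has been converted to an $H^{-1+\ell}$-to-$L^2$ dual bound. Concretely, I would estimate $\|\prod_j f_j\|_{H^{\ell-1}}\lesssim\|\prod_j f_j\|_{L^{r}}$ with $\frac1r=\frac12+(1-\ell)$ (capped at $r=1$), which is the dual of $H^{1-\ell}\subset L^{r'}$. I would then apply Hölder with every factor in $L^{p r}$ and use $H^\ell\subset L^{pr}$, which requires $\ell\geq\frac12-\frac1{pr}$.

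The main obstacle is the endpoint bookkeeping, because the numerology becomes sharp. When $1-\ell\geq\frac12$, take $r=1$; the requirement becomes $\ell\geq\frac12-\frac1p=\frac{p-2}{2p}$, which is exactly the threshold. The dual embedding $L^1\subset H^{\ell-1}$, however, needs $1-\ell>\frac12$ strictly, and the endpoint Sobolev embeddings fail on $\bT$, so the pure Hölder argument loses an $\varepsilon$ at $\ell=\frac{p-2}{2p}$. To recover the endpoint I would fall back on the direct Fourier counting above. In the region $|k_1|\sim|k|$ the factor $\langle k\rangle^{-1}$ is absorbed by the $p-1$ lower frequencies. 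In the region $|k_1|\sim|k_2|\gg|k|$ (high--high to low) the gain comes instead from summing $\langle k\rangle^{2\ell-2}$ over $k$, which converges since $\ell<\frac12$. In each case one verifies $\sup_k S(k)<\infty$ by the elementary estimate $\sum_{|n|\leq M}\langle n\rangle^{-2\ell}\lesssim M^{1-2\ell}$ and compares exponents. I expect this high--high interaction with $p-1$ summed low frequencies to be the delicate step, and that its exponent count is what forces $\ell\geq\frac{p-2}{2p}$. Since the lemma is quoted from \cite{KimK}, I would follow their case split if my count does not close at the endpoint.
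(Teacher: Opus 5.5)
The paper does not prove Lemma \ref{kimk}; it quotes it from \cite{KimK}, so your argument has to stand on its own. As written, it does not reach the endpoint. The cause is that you discard a correct argument for a wrong reason.

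At $\ell=\frac{p-2}{2p}$ you have $1-\ell=\frac12+\frac1p>\frac12$ strictly, so $L^1(\bT)\subset H^{\ell-1}_{per}$ holds with no loss. Indeed, $|\widehat F(k)|\le\|F\|_{L^1}$ gives $\|F\|_{H^{\ell-1}_{per}}^2\le\|F\|_{L^1}^2\sum_{k\in\Z}(1+4\pi^2k^2)^{\ell-1}$, which is finite for every $\ell<\frac12$. Likewise, $H^{\frac12-\frac1q}_{per}\subset L^q(\bT)$ is true for every $2\le q<\infty$; the only failing endpoint is $q=\infty$, i.e.\ $H^{1/2}_{per}\not\subset L^\infty$. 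Hence, with $F=\prod_{j=1}^p f_j$ and $\frac{p-2}{2p}\le\ell<\frac12$,
\[
\|(I-\Delta)^{-1}\partial_x F\|_{H^{\ell}_{per}}\le\|F\|_{H^{\ell-1}_{per}}\le C\|F\|_{L^1}\le C\prod_{j=1}^p\|f_j\|_{L^p}\le C\prod_{j=1}^p\|f_j\|_{H^{\frac{p-2}{2p}}_{per}}\le C\prod_{j=1}^p\|f_j\|_{H^{\ell}_{per}},
\]
which is the lemma, endpoint included. The only place an $\varepsilon$ is genuinely needed is $\ell=\frac12$. There your cap $r=1$ would require $H^{1/2}_{per}\subset L^\infty$, so take $r>1$ instead (the dual of $H^{1/2}_{per}\subset L^{r'}$ with $r'<\infty$) and use $H^{1/2}_{per}\subset L^{pr}$. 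For $\ell>\frac12$ the algebra property suffices.

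The Fourier-counting fallback has a real flaw, exactly in the step you call delicate. In the high--high--to--low region, the reduction to $\sup_k S(k)<\infty$ is false. For fixed $k$, the shell $|k_1|\sim|k_2|\sim N\gg|k|$ contributes about $N\cdot N^{-4\ell}\cdot N^{(p-2)(1-2\ell)}=N^{p-1-2p\ell}$. At $\ell=\frac{p-2}{2p}$ this is $N$, so $S(k)=+\infty$; for $p=2$, $\ell=0$ it is literally $\sum_{k_2}1$.

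To make $\langle k\rangle^{2\ell-2}$ summable in $k$ (your notation $\langle k\rangle=(1+4\pi^2k^2)^{1/2}$), apply Cauchy--Schwarz with a high frequency fixed, say $k_1$. Let $(k,k_3,\dots,k_p)$ be free, with $k_2$ determined. The resulting quantity
\[
\sup_{k_1}\sum\langle k\rangle^{2\ell-2}\langle k_1\rangle^{-2\ell}\langle k_2\rangle^{-2\ell}\prod_{j\ge3}\langle k_j\rangle^{-2\ell}\lesssim N^{p-2-2p\ell}
\]
is bounded precisely when $\ell\ge\frac{p-2}{2p}$. Combined with your fixed-$k$ count, which is $\lesssim K^{-2/p}$ in the region $|k_1|\sim|k|\sim K$, this would also give a proof.

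Since you leave that step as an expectation, and with the wrong supremum, the proposal as submitted is incomplete. The simplest repair is to keep the embedding argument above.
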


    \begin{lemma} \label{multi2}
        For $\ell'\in [\frac{p-2}{2p},\ell]$ and $p\in \N\cap [2,\infty)$, we have 
        \begin{align*}
            \int_{s}^{s+t} \bigg\|\partial_x\bigg(\prod_{j=1}^p f_j(s)\bigg)\bigg\|_{H^{\ell'-2}_{per}}^2ds &\leq C \prod_{j=1}^p \|f_j\|_{Y^\ell_{s,t}}^2.
        \end{align*}
    \end{lemma}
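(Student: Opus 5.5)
The plan is to reduce Lemma \ref{multi2} to the pointwise-in-time multilinear estimate of Lemma \ref{kimk} and then integrate in time using the $Y^\ell_{s,t}$ structure.

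\textbf{Step 1: pointwise bound at fixed time.} Fix a time $s'\in[s,s+t]$ and write $F(s')=\prod_{j=1}^p f_j(s')$. Since $(I-\Delta)^{-1}$ is an isometry from $H^{\ell'-2}_{per}$ onto $H^{\ell'}_{per}$,
\[
\|\partial_x F(s')\|_{H^{\ell'-2}_{per}}=\|(I-\Delta)^{-1}\partial_x F(s')\|_{H^{\ell'}_{per}}.
\]
Because $\ell'\geq\frac{p-2}{2p}$, Lemma \ref{kimk} applied with exponent $\ell'$ gives
\[
\|\partial_x F(s')\|_{H^{\ell'-2}_{per}}\leq C\prod_{j=1}^p\|f_j(s')\|_{H^{\ell'}_{per}}.
\]
As $\ell'\leq\ell$, the embedding $H^{\ell}_{per}\subset H^{\ell'}_{per}$ with constant $1$ then yields
\[
\|\partial_x F(s')\|_{H^{\ell'-2}_{per}}\leq C\prod_{j=1}^p\|f_j(s')\|_{H^{\ell}_{per}}.
\]

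\textbf{Step 2: integration in time.} Square the bound from Step 1 and integrate over $[s,s+t]$. Keep one factor, say $j=1$, in $L^2$ in time and put the remaining $p-1$ factors in $L^\infty$ in time:
\[
\int_s^{s+t}\|\partial_x F\|^2_{H^{\ell'-2}_{per}}\,ds'\leq C^2\Big(\int_s^{s+t}\|f_1(s')\|^2_{H^\ell_{per}}ds'\Big)\prod_{j=2}^p\sup_{s'\in[s,s+t]}\|f_j(s')\|^2_{H^\ell_{per}}.
\]
Each of these factors is bounded by the corresponding $\|f_j\|^2_{Y^\ell_{s,t}}$, since the $Y$-norm contains both the supremum part and the $L^2$ part. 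This gives the claim with a constant $C$ depending only on $p$ and $\ell'$, and not on $s$ or $t$.

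\textbf{Consistency check.} Taking $p=2$ forces $\ell'\ge 0$, and the resulting bound is consistent with Lemma \ref{bilinear}. There, after the shift of two derivatives, the regularity index $\ell'-2$ corresponds to the lemma's $H^{\ell'}$ norm of $(vw)_x$ placed inside $(I-\Delta)^{-1}$. This fits how the estimate is used in the proofs of Proposition \ref{estw} and Proposition \ref{wYlT}, where $H^{\ell-2}$ norms of $(vw)_x$ appear.

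\textbf{Main obstacle.} The only delicate point is the measurability and finiteness of the time integrand: we need $f_j\in L^\infty\cap L^2([s,s+t];H^\ell_{per})$. This is immediate from finiteness of the $Y^\ell_{s,t}$ norm, together with strong measurability of $s'\mapsto\prod_j f_j(s')$ into $H^{\ell'}_{per}$. I would handle the latter by approximating with simple functions and using the multilinear continuity in Lemma \ref{kimk}.

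Otherwise the argument is routine. The only nontrivial analytic input, the sharp threshold $\frac{p-2}{2p}$, is inherited entirely from Lemma \ref{kimk}.
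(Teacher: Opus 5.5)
Your proof is correct and matches the paper's argument step for step: the identity $\|\partial_x F\|_{H^{\ell'-2}_{per}}=\|(I-\Delta)^{-1}\partial_x F\|_{H^{\ell'}_{per}}$, Lemma \ref{kimk} at exponent $\ell'$, the embedding $H^{\ell}_{per}\subset H^{\ell'}_{per}$, and H\"older in time with one factor in $L^2$ and the others in $L^\infty$. The only cosmetic difference is that the paper averages over which factor $j$ goes in $L^2$, while you fix $j=1$; this gives the same bound.
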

    \begin{proof}
    It suffices to consider the case $s=0$, and the proof for general $s$ would be similar. Note that
        \begin{align*}
            \int_0^t\bigg\|\partial_x\bigg(\prod_{j=1}^p f_j(s)\bigg)\bigg\|_{H^{\ell'-2}_{per}}^2ds&=\int_0^t\bigg\|(I-\Delta)^{-1}\partial_x\bigg(\prod_{j=1}^p f_j(s)\bigg)\bigg\|_{H^{\ell'}_{per}}^2ds \\&\leq C\int_0^t \prod_{j=1}^{p}\|f_j(s)\|_{H^{\ell'}_{per}}^2 ds \\
            &\leq \frac{C}{p}\sum_{j=1}^p\bigg(\int_0^t\|f_j(s)\|_{H^{\ell'}_{per}}^2ds \bigg)\prod_{k\neq j}\sup_{0\leq s\leq t}\|f_k(s)\|_{H^{\ell'}_{per}}^2 \\
            &\leq \frac{C}{p}\sum_{j=1}^p\bigg(\int_0^t\|f_j(s)\|_{H^{\ell}_{per}}^2ds \bigg)\prod_{k\neq j}\sup_{0\leq s\leq t}\|f_k(s)\|_{H^{\ell}_{per}}^2 \\
            &\leq \frac{C}{p}\sum_{j=1}^{p}\|f_j\|_{Y^{\ell}_{0,t}}^2\prod_{k\neq j}\|f_k\|_{Y^{\ell}_{0,t}}^2\\&= C\prod_{j=1}^{p}\|f_j\|_{Y^\ell_{0,t}}^2.
        \end{align*}
    \end{proof}
    In generalized BBM case, if $a(x,t)=\frac{1}{p}\sum_{j=0}^{p-1} [u(x,t+\Gth)]^{j}[u(x,t)]^{p-1-j}$, then $w(x,t)=u(x,t+\Gth)-u(x,t)$ is the solution to \eqref{eqmainperi1}. To obtain Theorems \ref{thm2} and \ref{thm3}, one needs $\sup_{t\geq 0}\|a\|_{Y^{\ell}_{t,T}}$ sufficiently small, which can be achieved by the fact that $H^\ell_{per}$ forms an algebra for $\ell\geq 1>\frac{1}{2}$. Indeed, by Lemma \ref{multi2}, one has 
    \begin{align*}
        \|f^{j}g^{p-1-j}\|_{Y^{\ell}_{t,T}}&=\sup_{s\in [t,T+t]}\|f^jg^{p-1-j}\|_{H^\ell_{per}}+\|f^jg^{p-1-j}\|_{L^2([t,T+t];H^\ell_{per})}\\
        &\leq \sup_{s\in [t,T+t]}\|f^jg^{p-1-j}\|_{H^\ell_{per}}+\bigg(\int_t^{t+T}\|f(s)\|_{H^\ell_{per}}^{2j}\|g(s)\|_{H^\ell_{per}}^{2(p-1-j)}ds\bigg)^{\frac{1}{2}} \\
        &\leq 2\|f\|_{Y^{s}_{t,T}}^j\|g\|_{Y^s_{t,T}}^{p-1-j}.
    \end{align*}
    Thus, as long as taking $\|\phi\|_{H^{\ell}_{per}}^2+\|f\|_{L^\infty([0,\infty);H^{\ell-2}_{per})}^2$ small enough so that $\sup_{t\geq 0}\|a\|_{Y^{\ell}_{t,T}}$ is sufficiently small to apply Lemma \ref{aprieqap1}. The remainder of the argument proceeds exactly as the proof of Theorems \ref{thm2} and \ref{thm3}.\\
    
    Also, to prove Theorem \ref{thm4}, noticing that $\lan u^{p-1}u_x\ran =0$, no changes are necessary to obtain the result other than the bilinear estimate. Therefore,  Theorems \ref{thm1}-\ref{thm4} hold for generalized BBM equation for $\ell\geq 1$.\\

    For $\ell <1$, following the proof of Theorems \ref{thm2} and \ref{thm3} for the case $\ell\geq 1$ by taking $a(x,t)=\sum_{j=0}^{p-1}[u(x,t+\Gth)]^{j}[u(x,t)]^{p-1-j}$, $a(x,t)=\sum_{j=0}^{p-1}[\widetilde{u}(x,t)]^{j}[u(x,n\Gth+t)]^{p-1-j}$, and $a(x,t)=\sum_{j=0}^{p-1}[u(x,t)]^{j}[\widetilde{u}(x,t)]^{p-1-j}$ in sequence, we can prove Theorems \ref{thm2} and \ref{thm3} for $\ell\in [\frac{p-2}{2p},1]$ with the aid of Lemma \ref{multi2}. To obtain Theorem \ref{thm4}, we consider the $I$-method again. From \cite{MWang2}*{Lemma 2.4}, one has 
    $$\|(I-\Delta)^{-1}\partial_xI_N(\prod_{j=1}^p f_j)\|_{H^1_{per}}=\|(I-\Delta)^{-1}I_N\partial_x(\prod_{j=1}^p f_j)\|_{H^1_{per}}\leq C\prod_{j=1}^p \|I_Nf_j\|_{H^1_{per}}$$
    provided that all $f_j\in H^{\ell}_{per}$ for given $\ell\in [\frac{1}{2}-\frac{1}{p},1)$. The first equality holds because $I_N$ is also a Fourier multiplier. Also,  one needs to apply \cite{MWang2}*{Lemma 2.6}, namely, 
    $$|(I_N\partial_x(u^p), I_N u)|\leq C N^{-\frac{3}{2}+}\|I_N(u)\|_{H^1_{per}}^{p+1}.$$ By choosing $N_1\in \N$ large enough so that 
    $$\Gg_1=\Gg' -\frac{1}{8\pi^2}-CN_1^{-\frac{3}{2}+}\|I_Nu\|_{H^1_{per}}^{p-1}>0,$$ we can conclude the theorem by following the proof of Theorem \ref{thm4} for $0\leq \ell<1$ ($p=2$).\\


    We now summarize the discussion above as the following theorem: 

    \begin{theorem}
        Let $p\in \N\cap[2,\infty)$, $\Gb\in \R$, $\Gg>\max\{0,-\frac{\beta}{4\pi^2}\}$, $\ell\in [\frac{1}{2}-\frac{1}{p},\infty)$, and $f$ be a $H^{\ell-2}_{per}$-valued $\Gth$-periodic with $\lan f(t)\ran=0$ for all $t\in [0,\Gth)$.
        \begin{enumerate}
            \item For $\tau,\, T>0$, there exists $\delta>0$ such that if  $$\|\phi\|_{H^{\ell}_{per}}+\|f\|_{L^{\infty}([0,\Gth);H^{\ell}_{per})}\leq \delta,$$ then there exists a unique $u\in L^{\infty}\cap L^2([\tau,T];H^\ell_{per,0})$ such that $u$ is a mild solution to \eqref{mainp} and  
    $$\|u\|_{Y^{\ell}_{\tau,T}}\leq C(\|\phi\|_{H^{\ell}_{per}}+\|f\|_{L^{\infty}([0,\Gth);H^{\ell-2}_{per})}). $$ 
    \item For any $T,\,\tau>0$ and the solution $u$ in (1), there exists $C,\,\rho>0$ such that 
    $$\|u(\cdot, \cdot+\theta)-u(\cdot, \cdot)\|_{Y^{\ell}_{\tau,T}}\leq C e^{-\rho \tau}.$$
    \item Equation \eqref{mainp} has a locally stable and unique periodic solution in $H^{\ell}_{per}$.
    \item There exists $\Gd'>0$ such that \eqref{mainp} has an essentially globally stable periodic solution provided that $\ds \sup_{t\geq 0}\|f(t)\|_{H^{\ell-2}_{per}}\leq \Gd'$.
        \end{enumerate}
    \end{theorem}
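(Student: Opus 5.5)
The plan is to replay the scheme used for $p=2$ (Sections \ref{Sec3}--\ref{Sec5}), replacing the bilinear input by the multilinear Lemma \ref{kimk} and Lemma \ref{multi2}. Items (2)--(4) all rest on one fact: the relevant potential $a$ is small in $Y^\ell_{t,T}$, uniformly in $t$.

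\textbf{Item (1).} Split $u=v+w$ exactly as before. Here $v$ solves the linear problem \eqref{sub1}, and Proposition \ref{prop3-1}, Lemma \ref{estu} and Remark \ref{rmkforfinLinfty} apply verbatim, because the linear part does not depend on $p$. The remainder $w$ solves the analogue of \eqref{eqsub2}, with nonlinearity $\frac1p\partial_x[(v+w)^p]-\frac1p\partial_x(v^p)$ and source $-\frac1p\partial_x(v^p)$. I would run the contraction of Proposition \ref{estw} in the ball $Y^\ell_{0,T;M'}$. Expanding binomially, each term is a product of $p$ factors, each equal to $v$ or $w$. Lemma \ref{multi2}, applied with $\ell'=\ell$ (admissible since $\ell\ge\frac{p-2}{2p}$), bounds the Duhamel integral by $C\sum_j\|w\|^{j}\|v\|^{p-j}$ in $Y^\ell$. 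The same lemma bounds differences, through the telescoping identity $\prod a_j-\prod b_j=\sum_k(\prod_{j<k}b_j)(a_k-b_k)(\prod_{j>k}a_j)$, giving a Lipschitz constant of order $(M')^{p-1}$. Smallness of $M'$ then closes the fixed point. The time-shifted estimate (Proposition \ref{wYlT}) and the iteration over blocks $[(k-1)T,kT]$ (Proposition \ref{Prop4}) go through unchanged. The recursion becomes $\|w_k\|\le(e^{-\omega T}+C\eta^{p-1})\|w_{k-1}\|+C\sup_t\|v\|^p_{Y^\ell_{t,T}}$, which gives a bound independent of $\tau$.

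\textbf{Items (2) and (3).} Set $a=\frac1p\sum_j u(\cdot+\theta)^j u^{p-1-j}$. Then $w=u(\cdot+\theta)-u$ solves \eqref{eqmainperi1}. To use Lemma \ref{aprieqap1} and the contraction $\|w_k\|\le\mu\|w_{k-1}\|$, I need $\|(aw)_x\|_{L^2H^{\ell-2}}\lesssim\sup_t\|u\|^{p-1}_{Y^\ell_{t,T}}\|w\|_{Y^\ell}$. This follows by applying Lemma \ref{multi2} directly to the $p$-fold products $u(\cdot+\theta)^j u^{p-1-j}w$. This route avoids any algebra property of $H^\ell$, which fails for $\ell\le\frac12$. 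Item (1) makes $\sup_t\|u\|_{Y^\ell_{t,T}}\lesssim\delta$, so $\mu<1$. The Cauchy-sequence argument for $u(k\theta)$, the construction of $\widetilde u$ with its periodicity, and local stability then repeat the proof of Theorem \ref{thm3}. The potential $a$ is replaced by the analogous symmetric sums built from $\widetilde u$, $u(n\theta+\cdot)$ and $u$.

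\textbf{Item (4).} For $\ell\ge1$, the $H^1$ energy identity is unchanged because $\lan u^{p-1}u_x,u\ran=0$. The absorbing estimate then follows as in the proof of Theorem \ref{thm4}, with the bootstrap to higher $\ell$ using $\|u^p\|_{H^{\ell-1}}\lesssim\|u\|^p_{H^{\ell-1}}$ once $\ell-1>\frac12$. The interval $1<\ell\le 2$ needs $H^1$ to be an algebra, which holds. For $\ell<1$, I would use the $I$-method energy for $I_Nu$ with the commutator bound $|(I_N\partial_x(u^p),I_Nu)|\le CN^{-3/2+}\|I_Nu\|^{p+1}_{H^1}$ from \cite{MWang2}. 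This gives $\frac{d}{dt}\|I_Nu\|^2_{H^1}+2\gamma_1\|I_Nu\|^2_{H^1}\le\|f\|^2$ with $\gamma_1$ as displayed above. Absorption combined with local stability then gives global stability.

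\textbf{Main obstacle.} The hard step is the low-regularity global part, because $\gamma_1$ depends on $\|I_Nu\|^{p-1}$. The choice of $N$ must be made once and still dominate the absorbed size. The plan is to fix $N$ in terms of $\|I_N\phi\|_{H^1}$ plus the absorbing radius $C\delta'$. A continuity (bootstrap) argument then keeps $\|I_Nu(t)\|_{H^1}$ below this threshold for all $t$.
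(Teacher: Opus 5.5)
Items (1)--(3), and item (4) for $\ell\ge 1$, follow the paper's Remark \ref{multi}: Lemma \ref{multi2} replaces Lemma \ref{bilinear}, with the same symmetric potential $a$, the same $H^1$ energy identity, and the same bootstrap in $\ell$. Applying Lemma \ref{multi2} to the full $p$-fold products $u(\cdot+\theta)^ju^{p-1-j}w$, rather than first bounding $\|a\|_{Y^\ell}$, is the right reading of the paper's ``with the aid of Lemma \ref{multi2}'' when $H^\ell$ is not an algebra. The direct contraction for item (1) at $\ell<1$ is also fine, since the data are small; it is simpler than the $I$-method route of Section \ref{Sec5}.

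The step that does not work is your resolution of the main obstacle in item (4) for $\ell<1$. You cannot fix $N$ in terms of $\|I_N\phi\|_{H^1}$, because that quantity depends on $N$. The only uniform information is $\|I_N\phi\|_{H^1}\le CN^{1-\ell}\|\phi\|_{H^\ell}$, and this is essentially attained. For instance, take $\widehat\phi(k)=R|k|^{-\ell-\frac12}(\log|k|)^{-1}$ for $|k|\ge 2$ and zero otherwise. Then $\|\phi\|_{H^\ell}\le CR$, while the frequencies $|k|\ge 2N$ alone give $\|I_N\phi\|_{H^1}\ge cRN^{1-\ell}(\log 2N)^{-1/2}$ for every $N\ge 1$. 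The condition you need to start the continuity argument is $CN^{-\frac32+}\|I_N\phi\|_{H^1}^{p-1}<\frac{\gamma'}{2}$. It therefore forces $R^{p-1}N^{(1-\ell)(p-1)-\frac32+}(\log 2N)^{-\frac{p-1}{2}}\le C'$. For large $R$ this fails for every $N\ge1$ unless $(1-\ell)(p-1)<\frac32$, i.e.\ $\ell>1-\frac{3}{2(p-1)}$.

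For $p\ge 3$ this threshold lies strictly above $\frac12-\frac1p$. When $p=3$ it is $\frac14$ versus $\frac16$, and $\frac14$ is exactly the endpoint of the global well-posedness range quoted from \cite{KimK}. When $p=4$ it is $\frac12$ versus $\frac14$. So large-data absorption, and hence global stability, is not obtained for $\ell\in[\frac12-\frac1p,\,1-\frac{3}{2(p-1)}]$.

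The paper's sketch has the same defect: it picks $N_1$ with $\|I_Nu\|_{H^1}^{p-1}$ inside $\gamma_1$ without tracking the $N$-dependence. So you are not missing an idea the paper supplies, but neither argument proves item (4) on the full stated range. You would need to restrict $\ell$ (or the size of $\phi$), or find a sharper almost-conservation estimate.

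A smaller point: the $\gamma_1$ you import contains $-\frac{1}{8\pi^2}$, which the hypotheses on $\beta,\gamma$ do not dominate. Absorb the forcing with a weighted Young inequality in $\|I_Nf\|_{H^{-1}}$ instead. Note, though, that $\|I_Nf\|_{H^{-1}}$ is only bounded by $CN^{1-\ell}\|f\|_{H^{\ell-2}}$, so the absorbing radius grows with $N$ as well.
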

\end{remark}

\section*{Declaration of competing interests}
The authors declare that they have no known competing financial interests or personal relationships that could have appeared to influence the work reported in this paper.

\section*{Acknowledgments}
The authors sincerely thank the reviewer for their valuable comments and diligent work. \\

\noindent One of the authors, Taige Wang, is supported by Faculty Development Fund granted by College of Arts and Sciences, University of Cincinnati, and Taft Award by Taft Research Center, University of Cincinnati. The authors would like to take this chance to thank them for their constant support.


\begin{thebibliography}{lllp}
\bibitem{Amick} C. Amick, J. Bona, and M. Schonbek, Decay of solutions of some nonlinear wave equations, J. Diff. Equ., 81 (1989), 1–49. 

\bibitem{BBM} T. Benjamin, J. Bona, and J. Mahony, Model equations for long waves in nonlinear dispersive systems, Philos. Trans. Royal Soc. London Ser. A, 272 (1972), 47 - 78.





\bibitem{BSZ2} J. Bona, S. Sun, and B.-Y. Zhang, Forced oscillations of a damped Korteweg-de Vries equation in a quarter plane, Comm. Cont. Math., 5 (3) (2003), 369-400.




\bibitem{Tzvet} J. Bona and N. Tzvetkov, Sharp well-posedness results for the BBM equation, Disc. Cont. Dyn. Syst., 23 (4) (2009), 1241 - 1252. 

\bibitem{BW} J. Bona and J. Wu, Temporal growth and eventual periodicity for dispersive wave equations in a quarter plane, Disc. Cont. Dyn. Syst., 23 (4) (2009), 1141 - 1168.  


\bibitem{Bourgain} J. Bourgain, Fourier transform restriction phenomena for certain lattice subsets and applications to nonlinear evolution equations II. The KdV-equation, Geom. Funct. Anal., 3 (1993), 209 - 262. 

\bibitem{Tao1} J. Colliander, M. Keel, G. Staffilani, H. Takaoka, and T. Tao, Sharp global well-posedness for KdV and modified KdV on $\mathbb{R}$ and $\mathbb{T}$, J. Amer. Math. Soc., 16 (3) (2003), 705 - 749. 

\bibitem{Tao2} J. Colliander, M. Keel, G. Staffilani, H. Takaoka, and T. Tao, Multilinear estimates for periodic KdV equations and applications, J. Func. Anal., 211 (2004), 173 - 218. 

\bibitem{HChen} H. Chen, Periodic initial-value problem for BBM-equation, Comp. Appl. Math., 48 (2004),  1305–1318. 


\bibitem{Kenig} C.E. Kenig, G. Ponce, and L. Vega. A bilinear estimate with applications to the KdV equation. J. Amer. Math. Soc., 9 (1996), 573–603. 

\bibitem{KimK} S. Kim and C. Kwak, Well-posedness issues for the generalized Benjamin--Bona--Mahony equation, arXiv:2603.21060, preprint.

 \bibitem{LW} C. Lau and T. Wang, On periodic solutions of the Benjamin-Bona-Mahony-Burgers equation, Appl. Anal., to appear.

\bibitem{LW2} C. Lau and T. Wang, Forced oscillation of a damped BBM equation posed on whole line in low regularity spaces, submitted, available at arxiv: 2602.08327. 


\bibitem{LZ} Z. Liu and S. Zheng, Semigroups Associated with Dissipative Systems, Vol. 398., CRC Press, 1999.




\bibitem{Russell} D. Russell and B.-Y. Zhang, Smoothing and decay properties of solutions of the Korteweg-de Vries equation on a periodic domain with point dissipation, J. Math. Anal. Appl., 190 (1995), 449 -- 488.

\bibitem{RZ} D. Russell and B.-Y. Zhang, Exact controllability and stabilizability of the Korteweg-de Vries equation, Trans. Amer. Math. Soc., 348 (9) (1996), 3643-3672. 

\bibitem{UsmanZ2} M. Usman and B.-Y. Zhang, Forced oscillations of the Korteweg-de Vries equation on a bounded domain and their stability, Disc. Cont. Dyn. Sys., 26 (4) (2010), 1509 - 1523.

\bibitem{MWang2} M. Wang, Long time behavior of a damped generalized BBM equation in low regularity spaces, Math. Meth. Appl. Sci., 38 (2015), 4852 - 4866. 

\bibitem{WZ} T. Wang and B.-Y. Zhang, Forced oscillation of viscous Burgers' equation with a time-periodic force, Disc. Cont. Dyn. Syst. B, 26 (2) (2021), 1205 - 1221.


\bibitem{Yang} X. Yang and B.-Y. Zhang, Local well-posedness of the coupled KdV-KdV systems on $\mathbb{R}$, Evol. Equ. Control Theory, 11(5) (2022), 1829-1871.

\bibitem{Yang2} X. Yang and B.-Y. Zhang, Well-posedness and critical index set of the Cauchy problem for the couple KdV-KdV systems on $\mathbb{T}$, Disc. Cont. Dyn. Syst., 42 (11) (2022), 5167 - 5199. 
\end{thebibliography}
\end{document}